\documentclass [leqno, 10pt]{amsart}

\AtBeginDocument{%
   \def\MR#1{}
}

\AtBeginDocument{%
   \def\DOI#1{}
}
\AtBeginDocument{%
   \def\URL#1{}
   \def\ISBN#1{}
}

\usepackage[T1]{fontenc}
\usepackage[utf8]{inputenc}

\usepackage{geometry}

\usepackage{microtype}

\usepackage[headings, myheadings]{fullpage}

\usepackage{lmodern}
\usepackage{microtype}

\usepackage[all]{xy}
\usepackage{amsmath, amsfonts, amsthm, amsbsy, amscd, amsxtra, stmaryrd}
\usepackage{mathabx} 
\usepackage{amssymb}
\usepackage[bbgreekl]{mathbbol}%
\usepackage{pgf,interval}
\usepackage[mathscr]{euscript}
\usepackage{upgreek}
\usepackage{mathtools}
\usepackage{tikz}
\usepackage[inline]{enumitem}%
\usepackage{url}
\usepackage{hyperref}
\hypersetup{
	colorlinks=true, linktocpage=true, hidelinks
}

\usepackage{etoolbox}
\apptocmd{\sloppy}{\hbadness 10000\relax}{}{}

\usepackage{thmtools}%
\newtheorem{theorem}{Theorem}[section]
\newtheorem{corollary}[theorem]{Corollary}%
\newtheorem{lemma}[theorem]{Lemma}%
\newtheorem{definition}[theorem]{Definition}%

\theoremstyle{definition}

\declaretheoremstyle[
  headfont=\normalfont\bfseries,
  sharenumber = theorem,
  bodyfont=\normalfont,
    qed = {\hbox{$\triangleleft$}}
]{examplestyle2}

\declaretheorem[
  style=examplestyle2,
  title=Example,
  refname={example,examples},
  Refname={Example,Examples}
]{example}

\declaretheorem[
  style=examplestyle2,
  title=Remark,
  refname={remark, remarks},
  Refname={Remark, Remarks}
]{remark}

\numberwithin{equation}{section}

\makeatletter
\def\l@subsection{\@tocline{2}{0pt}{1pc}{4.6em}{}}
\renewcommand{\tocsubsection}[3]{%
  \indentlabel{\@ifnotempty{#2}{\hspace*{2.3em}\makebox[2.3em][l]{%
    \ignorespaces#1 #2.\hfill}}}#3}
\makeatother

\newcommand{\grass}{\operatorname{Gr}}

\newcommand{\sect}{\mathscr{K}}

\newcommand{\idem}{\mathbb{Idem}}

\newcommand{\so}{\mathfrak{so}}

\newcommand{\tmlt}{\circledcirc}
\usepackage{stackengine} 
\usepackage{graphicx}

\newcommand{\alg}{\mathbb{A}}

\newcommand{\h}{\mathfrak{h}}
\newcommand{\kl}{\mathfrak{k}}

\newcommand{\om}{\omega}

\newcommand{\mlt}{\circ}

\newcommand{\hess}{\operatorname{Hess}}

\renewcommand{\part}{\vdash}

\newcommand{\dum}{\,\cdot\,\,}

\newcommand{\id}{\operatorname{Id}}

\renewcommand{\sp}{\mathfrak{sp}}

\newcommand{\ep}{\epsilon}

\newcommand{\eno}{\operatorname{End}}

\newcommand{\lin}{\mathsf{L}}

\newcommand{\si}{\sigma}

\newcommand{\integer}{\mathbb{Z}}

\newcommand{\re}{\operatorname{Re}}

\newcommand{\al}{\alpha}

\newcommand{\spn}{\operatorname{Span}}

\newcommand{\gl}{\mathfrak{gl}}
\newcommand{\su}{\mathfrak{su}}
\newcommand{\sll}{\mathfrak{sl}}

\newcommand{\proj}{\mathbb{P}}

\newcommand{\g}{\mathfrak{g}}

\newcommand{\p}{\mathfrak{p}}

\newcommand{\stiefel}{\mathscr{V}}

\newcommand{\ad}{\operatorname{ad}}
\newcommand{\Ad}{\operatorname{Ad}}

\newcommand{\rea}{\mathbb R}
\newcommand{\com}{\mathbb C}

\newcommand{\tr}{\operatorname{tr}}

\newcommand{\gz}{\g_{0}}

\newcommand{\crtnt}{\mathrm{Crit}^*}
\newcommand{\hv}{h^\vee}
\newcommand{\HH}{\mathcal H}

\newcommand{\norm}[1]{\lVert #1\rVert}
\newcommand{\nnorm}[1]{\lvert #1\rvert}%
\newcommand{\comass}[1]{\lVert #1\rVert_{\ast}}%
\newcommand{\fnorm}[1]{\lVert #1\rVert_{F}}
\newcommand{\subs}{\mathfrak{s}}
\newcommand{\sgn}{\operatorname{sign}}
\newcommand{\qad}{Q}

\newcommand{\Usp}{U}
\newcommand{\Uperp}{\Usp^{\perp}}

\begin{document}
\title[Sharp commutator bounds on simple Lie algebras]{Sharp commutator bounds on complex simple Lie algebras:\texorpdfstring{\\}{ } the Böttcher--Wenzel inequality and\texorpdfstring{\\}{ } the comass of the Cartan $3$-form}
\hypersetup{pdftitle={Sharp commutator bounds on complex simple Lie algebras: the Böttcher-Wenzel inequality and the comass of the Cartan 3-form}, pdfauthor={Daniel J. F. Fox}}

\author{Daniel J.~F. Fox}
\address{Departamento de Matemática Aplicada\\ Escuela Técnica Superior de Arquitectura\\ Universidad Politécnica de Madrid\\Av. Juan de Herrera 4 \\ 28040 Madrid España}
\email{daniel.fox@upm.es} 

\date{\today}

\subjclass[2020]{Primary 17B20; Secondary 15A45, 15A69, 53C35, 53C38}
\keywords{B\"ottcher--Wenzel inequality, commutator, dual Coxeter number, Dynkin index, comass, Cartan $3$-form, Nahm algebra, twisted idempotent}

\begin{abstract}
For a complex simple Lie algebra $\g$ with compact conjugation $\tau$, Killing form $B$, and Hermitian form $H(x, y) = -B(x, \tau y)$, it is shown that $H([x,y],[x,y]) \leq (h^{\vee})^{-1}H(x,x)H(y,y)$, where $h^{\vee}$ is the dual Coxeter number, with equality exactly for $H$-orthogonal pairs in $\tau$-stable long-root subalgebras isomorphic to $\sll(2, \com)$. Equivalently, the comass of the Cartan $3$-form with respect to $H$ equals its comass on the compact real form: the spectral norm of the Cartan $3$-form does not increase under complexification. For the Frobenius norm on the image of a representation of Dynkin index $\ell$ the optimal constant is $2/\ell$; this contains the Böttcher--Wenzel and Bloch--Iserles inequalities, extends the latter to complex skew-symmetric matrices, and improves them substantially for the exceptional Lie algebras of types $F_{4}$, $E_{6}$, $E_{7}$, and $E_{8}$. The proof uses the Nahm algebra of $\g$ with the conjugation induced by $\tau$: critical points of the relevant function correspond to $\tau$-twisted idempotents, and its Hessian is governed by $\tau$-twisted multiplication operators, whose anticommutation with multiplication by $i$ turns the one-sided second order condition at a maximum into a two-sided spectral bound. Subalgebras isomorphic to $\sll(2, \com)$ of Dynkin index $j$ give critical points with critical value $1/(jh^{\vee})$, whose Morse indices are computed, and the set of maximum points is a nondegenerate critical manifold. Geometrically, the maximal complex sectional curvature of a compact simple Lie group with bi-invariant metric equals its maximal sectional curvature.
\end{abstract}

\maketitle

\setcounter{tocdepth}{1}%

\tableofcontents

\section{Introduction}\label{sec:intro}
By the Cauchy--Schwarz inequality, for complex $n \times n$ matrices $X$ and $Y$ and the Frobenius norm $\fnorm{X}^{2} = \tr X X^{\dagger}$, the squared norm of the commutator $\fnorm{[X, Y]}^{2}$ is bounded from above by $c\fnorm{X}^2 \fnorm{Y}^2$ for some constant $c > 0$. The classical Böttcher--Wenzel inequality \cite{Audenaert, Bottcher-Wenzel-howbig, Bottcher-Wenzel, Lu-normalscalarcurvature} is the statement that the optimal constant is $2$,
\begin{align}\label{eq:bwclassical}
\fnorm{[X, Y]}^{2} \leq 2 \fnorm{X}^2 \fnorm{Y}^2,
\end{align}
together with the characterization of the matrix pairs realizing equality. Earlier work of Bloch--Iserles \cite{Bloch-Iserles} showed that for real skew-symmetric matrices the constant $2$ in \eqref{eq:bwclassical} can be improved to $1$, and the special case of \eqref{eq:bwclassical} for real symmetric matrices was obtained by Chern--do Carmo--Kobayashi \cite{Chern-Docarmo-Kobayashi}, as part of their study of second fundamental forms of minimal submanifolds of spheres. 

The main result described here situates the Böttcher--Wenzel and Bloch--Iserles inequalities in a Lie theoretic framework in which the inequality \eqref{eq:bwclassical} corresponds to the $A_{n}$ series, and the Bloch--Iserles results for skew-symmetric matrices to the $B_{n}$ and $D_{n}$ series. Moreover, this interpretation yields a strictly stronger inequality for the exceptional Lie algebras of types $F_{4}$, $E_{6}$, $E_{7}$, and $E_{8}$, while for $G_{2}$ it reproduces the Bloch--Iserles constant. The optimal constant in \eqref{eq:bwclassical} depends on the choice of norm, and with an appropriate choice of norm the optimal constant for a complex simple Lie algebra is the reciprocal of its dual Coxeter number, and is related to the optimal constant for a matrix Lie algebra by the Dynkin index of the defining representation. 

There is a large literature generalizing \eqref{eq:bwclassical} and extending it in various ways, partly driven by the geometric motivation of \cite{Chern-Docarmo-Kobayashi}; see for example \cite{Cheng-Vong-Wenzel, Ge, Ge-Li-Zhou, Lu-remarks, Lu-Wenzel, Wu-Liu}. Much of it concerns what are called DDVV-type inequalities for $m$ matrices in a prescribed class; this is surveyed in \cite{Ge-Li-Tang-Zhou-survey}. Ge, Li, and Zhou ask \cite[Section 1, question (2)]{Ge-Li-Zhou}, \cite[Question 4.2]{Ge-Li-Tang-Zhou-survey} for DDVV-type inequalities on Lie algebras; Theorem \ref{thm:A} below answers this for two elements of an arbitrary complex simple Lie algebra with the Hermitian norm \eqref{eq:Hdef}, and Theorem \ref{thm:B} recovers the optimal two matrix constants for the classical matrix classes. The relation with the DDVV literature is discussed further in Section \ref{sec:classical}.

Since multiples of the identity are central, it suffices to consider \eqref{eq:bwclassical} for trace-free matrices, that is on $\sll(n, \com)$, and since \eqref{eq:bwclassical} is invariant under conjugation by the unitary group, it should be the special case for the pair $(\sll(n, \com), \su(n))$ of a statement about a complex simple Lie algebra and a compact real form.

Let $\g$ be a complex simple Lie algebra with Killing form $B(x, y) = \tr\ad(x)\ad(y)$, let $\gz \subset \g$ be a compact real form, so that $\g = \gz \oplus i\gz$ and $B$ is negative definite on $\gz$, and let $\tau$ be the associated \emph{compact conjugation}: the conjugate-linear Lie algebra automorphism of $\g$ that fixes $\gz$ pointwise and acts as $-1$ on $i\gz$. Then
\begin{align}\label{eq:Hdef}
H(x,y) := -B(x,\tau y), \qquad x,y\in\g,
\end{align}
is a positive definite Hermitian form, linear in $x$ and conjugate-linear in $y$; write $\norm{x}^{2} = H(x, x)$. For example, for $\g = \sll(n, \com)$ and $\gz = \su(n)$, $\tau X = -X^{\dagger}$ and $H(X, Y) = 2n\tr XY^{\dagger}$. By a theorem of Dynkin \cite[Theorem~2.2]{Dynkin-semisimplesubalgebras}, if one complex simple Lie algebra is embedded in another, and each is equipped with invariant form equal to the multiple of its Killing form for which long roots have squared length $2$, the normalized invariant form of the larger Lie algebra restricts to a positive integer multiple of the normalized form of the smaller one; this integer is the \emph{Dynkin index} of the embedding (Definition \ref{def:dynkin}). The subalgebras isomorphic to $\sll(2, \com)$ and having Dynkin index $1$ are exactly the \emph{long-root subalgebras}, the conjugates under the adjoint group of the subalgebra $\g_{\theta} \oplus \com\theta^{\vee} \oplus \g_{-\theta}$ attached to the highest root $\theta$ (Lemma \ref{lem:Bh}). The dual Coxeter number $\hv$ of $\g$ is recalled in Section \ref{sec:prelim}, and its values are listed in Table \ref{tab:improvement}.

\begin{theorem}\label{thm:A}
Let $\g$ be a complex simple Lie algebra with dual Coxeter number $\hv$. For compact real form $\gz$ with associated compact conjugation $\tau$ and Hermitian form $H$,%
\begin{align}\label{eq:main}
\norm{[x,y]}^{2} \leq \frac{1}{h^\vee}\norm{x}^{2}\norm{y}^{2} \qquad \text{for all}\,\, x, y \in \g,
\end{align}
and the constant is optimal. If $x, y \neq 0$, then equality holds in \eqref{eq:main} if and only if $H(x, y) = 0$ and $x$ and $y$ are contained in a $\tau$-stable subalgebra $\subs \subset \g$ isomorphic to $\sll(2, \com)$ and having Dynkin index $1$.
\end{theorem}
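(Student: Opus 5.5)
The plan follows the strategy announced in the abstract: maximize $F(x,y)=\norm{[x,y]}^{2}$ on the product of unit spheres, analyze the critical points in terms of the Nahm algebra, and identify the maximizers.

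\emph{Reduction to a trilinear form.} By compactness, $F$ attains a maximum $c$ on $\{\norm{x}=\norm{y}=1\}$. Consider the real trilinear quantity
\[
\re H([x,y],z) = -\re B([x,y],\tau z),
\]
which is the real part of the Cartan $3$-form evaluated on $(x,y,\tau z)$. The constant $c$ is then the square of the comass of this form on the triple of unit spheres. If $(x,y,z)$ is a maximizer with $z=[x,y]/\norm{[x,y]}$, the Lagrange conditions read
\[
[y,\tau z]^{\tau} \propto x,\qquad [\tau z,x]^{\tau}\propto y,\qquad [x,y]\propto z,
\]
with suitable $\tau$'s inserted. By invariance of $B$ all three multipliers equal $\sqrt c$. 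Rescaling, this is exactly a $\tau$-twisted idempotent in the Nahm algebra $\g\otimes\com^{3}$ (equivalently $\g^{3}$ with the cyclic bracket and conjugation induced by $\tau$), whose critical value is the reciprocal of its norm squared.

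\emph{Lower bound, and the $\sll(2)$ critical points.} Take a $\tau$-stable $\sll(2,\com)$ of Dynkin index $j$, with a compact basis $e_{1},e_{2},e_{3}\in\gz$ satisfying $[e_{1},e_{2}]=e_{3}$ cyclically. Since $-B$ restricted to the subalgebra equals $j\hv$ times the normalized form (via Dynkin's theorem and $B=2\hv\cdot$normalized form), a direct computation gives
\[
F(e_{1},e_{2}) = \frac{1}{j\hv}.
\]
The case $j=1$ shows that the constant in \eqref{eq:main} cannot be improved.

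\emph{Upper bound via the Hessian (the main obstacle).} At a global maximum the Hessian of the constrained function is negative semidefinite. It is expressed through $\tau$-twisted multiplication operators $L_{u}$ on the Nahm algebra, and these anticommute with multiplication by $i$. Consequently the spectrum of the relevant operator is symmetric under $\lambda\mapsto-\lambda$, so the one-sided condition $\lambda\le \lambda_{0}$ upgrades to $\abs{\lambda}\le\lambda_{0}$. Taking a trace of the square, and using $\tr(\ad x\,\ad y)=B(x,y)$ together with the Casimir eigenvalue $1$ of $\g$ in the Killing normalization, I would show
\[
\sum\lambda^{2} = \dim\g\cdot(\text{const})/c,
\]
while the number of eigenvalues is bounded. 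Combining the two forces $c\le 1/\hv$.

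I expect this step to be hard: choosing the right test directions and the right trace identity so that the constant that appears is exactly $\hv$. I would first try using the eigenvalues $\pm 1/\sqrt{c}$ of $L_{u}$ on a complement of the $\sll(2)$ spanned by the critical triple. That subalgebra is automatically a $\tau$-stable $\sll(2)$: the idempotent relations make $x,y,z$ satisfy $\sll(2)$ bracket relations after $\tau$-twisting. The bound should then reduce to the branching of $\g$ under this $\sll(2)$, where spins are at most $1$ times its index, which gives $c=1/(j\hv)\le1/\hv$.

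\emph{Equality.} At equality the maximizing triple generates a $\tau$-stable $\sll(2)$ of index $1$, i.e.\ a long-root subalgebra. The condition $H(x,y)=0$ comes from the Lagrange equations, since $x\perp y$ is forced when $[x,y]$ is maximal. Conversely, any $H$-orthogonal pair inside such an $\subs$ is a compact-basis pair up to the action of $\mathrm{SU}(2)$ and scaling. For this last point I would use that on $\sll(2,\com)$ with $H$, inequality \eqref{eq:main} is exactly the classical sharp inequality in dimension $2$, whose equality cases are precisely the orthogonal pairs.
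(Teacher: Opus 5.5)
Your framework matches the paper's. It reduces to $W$, identifies critical points with $\tau$-twisted idempotents, computes the critical value $1/(j\hv)$ on $\tau$-stable $\sll(2,\com)$'s, and uses the anticommutation of $\tau\lin(e)$ with $J$ to turn the one-sided Hessian condition into $\nnorm{\tau\lin(e)b}\le\tfrac12\nnorm{b}$ on $\Uperp_{e}$ (Lemma \ref{lem:secondorder}). The final step is also right: once the components of a maximizing idempotent span a $\tau$-stable $\sll(2,\com)$ of index $j$, Lemma \ref{lem:sl2str} gives $c=1/(j\hv)\le 1/\hv$, and no branching rule is needed.

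\textbf{The gap.} You assert that this span is ``automatically'' a $\tau$-stable $\sll(2,\com)$ because of the idempotent relations. That is false: the relations read $[e_j,e_k]=\tfrac12\tau e_i$, which only put brackets into $\tau(\spn_{\com}\{e_1,e_2,e_3\})$. Lemma \ref{thm:roottriangle} and Example \ref{ex:sl3} give a counterexample. In $\sll(3,\com)$, $(E_{12},E_{23},-E_{31})$ is a critical point of $W$ with value $1/6$, hence a rescaled $\tau$-twisted idempotent. Yet its span is not bracket-closed ($[E_{12},E_{23}]=E_{13}$), is not $\tau$-stable, and generates all of $\sll(3,\com)$. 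So the $\sll(2,\com)$ must come from the second-order condition.

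\textbf{The trace argument runs the wrong way.} We have $(\tau\lin(e))^{2}=\lin(e)^{\dagger}\lin(e)$, whose real trace is $4\nnorm{e}^{2}=3/c$. The eigenvalues on $\Usp_e$ are $\pm1,\pm\tfrac12,\pm\tfrac12$, and $|\lambda|\le\tfrac12$ on $\Uperp_e$. Together these yield only $3/c\le 3+\tfrac32(\dim\g-1)$, i.e.\ $c\ge 2/(\dim\g+1)$. An upper bound on eigenvalues bounds the trace, hence $\nnorm{e}^{2}$, from above, and so bounds $c$ from below. Proving $c\le 1/\hv$ needs a lower bound on $\nnorm{e}^{2}$.

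\textbf{The missing idea} (Lemma \ref{lem:rigidity}) is to test the two-sided bound on specific vectors rather than take a trace.
\begin{itemize}
\item Applied to $\iota_i(v)$ with $H(v,e_i)=0$, the bound gives $\sum_{a\ne i}\norm{[e_a,v]}^{2}\le\tfrac14\norm{v}^{2}$. Summing over $i$, $\sum_a\norm{[e_a,v]}^{2}\le\tfrac38\norm{v}^{2}$ whenever $v$ is $H$-orthogonal to $e_1,e_2,e_3$.
\item Put $f=\tau e$. The Jacobi identity gives $\sum_a[e_a,f_a]=0$, hence $\sum_a\qad(e_a)=\sum_a\qad(f_a)$. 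Evaluating at $f_m$ gives $\sum_a\norm{[e_a,f_m]}^{2}=\sum_a\norm{[f_a,f_m]}^{2}=1/(8c)$.
\item Write $f_m=v+p$ with $p$ the projection of $f_m$ on $\spn_{\com}\{e_1,e_2,e_3\}$. The cross terms vanish, and the identity becomes $\sum_a\norm{[e_a,v]}^{2}=\tfrac12\norm{v}^{2}$.
\end{itemize}
Comparing with $\tfrac38\norm{v}^{2}$ forces $v=0$. Hence the span is $\tau$-stable and bracket-closed. It is semisimple by Lemma \ref{thm:idem}\ref{idem3} and has dimension $3$, so it is $\sll(2,\com)$.

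A minor point: in the converse of the equality case, not every $H$-orthogonal pair in $\subs$ is a compact-basis pair up to $SU(2)$ and scaling. The isotropic $E_{12}$ of Example \ref{ex:eh} is a counterexample. However, the Lagrange identity \eqref{eq:sl2norm}, which you also cite, already suffices there.
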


\begin{example}\label{ex:eh}
Let $\g = \sll(2, \com)$ and $\gz = \su(2)$, so that $\tau X = -X^{\dagger}$, $H(X, Y) = 4\tr XY^{\dagger}$, and $\hv = 2$. For $e = E_{12}$ and $h = E_{11} - E_{22}$, $H(e, h) = 0$, $\norm{e}^{2} = 4$, $\norm{h}^{2} = 8$, and $[e, h] = -2e$, so
\begin{align}\label{eq:ehnorms}
\frac{\norm{[e, h]}^{2}}{\norm{e}^{2}\norm{h}^{2}} = \frac{16}{32} = \frac{1}{\hv},
\end{align}
and, by \eqref{eq:ehnorms}, equality holds in \eqref{eq:main}. However, $e$ and $h$ generate the two-dimensional subalgebra $\com e \oplus \com h$, which is not $\tau$-stable. Thus in Theorem \ref{thm:A} ``contained in'' cannot be replaced by ``generate''. The same happens in every complex simple Lie algebra $\g$: for $0 \neq e \in \g_{\theta}$, where $\theta$ is the highest root, $H(e, \theta^{\vee}) = 0$ and $e$ and $\theta^{\vee}$ lie in the $\tau$-stable subalgebra of Dynkin index $1$ attached to $\theta$, so equality holds in \eqref{eq:main} by Lemmas \ref{lem:sl2str}\ref{sl2b} and \ref{lem:lower}\ref{lower2}, while $e$ and $\theta^{\vee}$ generate the subalgebra $\g_{\theta} \oplus \com\theta^{\vee}$, which is not $\tau$-stable.
\end{example}

When $x, y \in \gz$ the equality case can be sharpened: then $x$ and $y$ generate a long-root subalgebra of $\gz$ isomorphic to $\su(2)$ (Corollary \ref{cor:compactequality}). 

Theorem \ref{thm:A} has an equivalent formulation as Theorem \ref{thm:comass}, given below, affirming the equality of the complex and the real comass of the Cartan $3$-form of $\g$.

The inequality \eqref{eq:main} can be reinterpreted as a statement about representations. For a nonzero representation $\rho:\g \to \gl(V)$, the Dynkin index $\ell_{\rho}$ of $\rho$ is the Dynkin index of $\rho$ regarded as an embedding of $\g$ in $\sll(V)$; equivalently, the trace form $\tr \rho(x)\rho(y)$ is $\ell_{\rho}$ times the normalized invariant form of $\g$ (Lemma \ref{lem:dynkinformula}). The defining representations of $\sll(n, \com)$ and $\sp(2n, \com)$ have Dynkin index $1$, the vector representation of $\so(n, \com)$, $n \geq 5$, has Dynkin index $2$, and the adjoint representation has Dynkin index $2\hv$.

\begin{theorem}\label{thm:B}
Let $\rho:\g \to \gl(V)$ be a nonzero representation of the complex simple Lie algebra $\g$ with Dynkin index $\ell_{\rho}$, let $V$ carry a Hermitian inner product invariant under the compact real form $\gz$, and let $\fnorm{X}^{2} = \tr XX^{\dagger}$ be the corresponding Frobenius norm on $\gl(V)$. Then
\begin{align}\label{eq:repbound}
\fnorm{[X, Y]}^{2} \leq \frac{2}{\ell_{\rho}}\fnorm{X}^{2}\fnorm{Y}^{2} \qquad \text{for all}\,\, X, Y \in \rho(\g),
\end{align}
the constant is optimal, and the equality case is the image under $\rho$ of the equality case of Theorem \ref{thm:A}.
\end{theorem}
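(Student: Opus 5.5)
Theorem \ref{thm:B} will be deduced from Theorem \ref{thm:A} by comparing the Frobenius norm on $\rho(\g)$ with the Hermitian norm $H$. The plan has three steps: reduce to a faithful representation, compare the two norms, and then transfer the inequality and its equality case.

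First, since $\g$ is simple and $\rho \neq 0$, $\rho$ is injective. Hence every $X, Y \in \rho(\g)$ are $X = \rho(x)$, $Y = \rho(y)$ for unique $x, y \in \g$, and $[X, Y] = \rho([x, y])$.

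Second, I would relate the norms. Let $\langle \cdot,\cdot\rangle$ denote the normalized invariant form, so that long roots have squared length $2$. Then $B = 2\hv \langle\cdot,\cdot\rangle$, which follows from the adjoint representation having Dynkin index $2\hv$ (equivalently the standard identity $B(\theta^\vee,\theta^\vee) = 4\hv$). By Lemma \ref{lem:dynkinformula}, $\tr\rho(x)\rho(y) = \ell_\rho\langle x, y\rangle$. Since the Hermitian inner product on $V$ is $\gz$-invariant, $\rho(\gz)$ consists of skew-Hermitian operators. Writing $x = a + ib$ with $a, b \in \gz$, we get $\rho(x)^\dagger = -\rho(a) + i\rho(b) = -\rho(\tau x)$. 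Therefore
\begin{align*}
\fnorm{\rho(x)}^2 = \tr\rho(x)\rho(x)^\dagger = -\tr\rho(x)\rho(\tau x) = -\ell_\rho\langle x, \tau x\rangle = \frac{\ell_\rho}{2\hv}\norm{x}^2.
\end{align*}

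Third, I would substitute this into \eqref{eq:main}. This gives
\begin{align*}
\fnorm{[X,Y]}^2 = \frac{\ell_\rho}{2\hv}\norm{[x,y]}^2 \leq \frac{\ell_\rho}{2(\hv)^2}\norm{x}^2\norm{y}^2 = \frac{\ell_\rho}{2(\hv)^2}\cdot\frac{(2\hv)^2}{\ell_\rho^2}\fnorm{X}^2\fnorm{Y}^2 = \frac{2}{\ell_\rho}\fnorm{X}^2\fnorm{Y}^2.
\end{align*}
Because the conversion factor $\ell_\rho/(2\hv)$ is the same for every element, equality in \eqref{eq:repbound} holds exactly when equality holds in \eqref{eq:main} for $(x, y)$. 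So optimality and the equality case transfer directly through $\rho$, using the equality pairs of Theorem \ref{thm:A} (for example those of Example \ref{ex:eh}).

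The only step needing care is the normalization constant. One must check that $B = 2\hv\langle\cdot,\cdot\rangle$ is consistent with the paper's conventions, including that the Dynkin index of the adjoint representation is $2\hv$, as stated before the theorem. One must also check that $\rho(x)^\dagger = -\rho(\tau x)$ holds for the chosen $\gz$-invariant inner product. Both are routine once the conventions of Section \ref{sec:prelim} are fixed. As a sanity check, $\ell_\rho = 1$ for $\sll(n,\com)$ gives the classical constant $2$ of \eqref{eq:bwclassical}.
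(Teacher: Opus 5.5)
Your proposal is correct and follows the paper's own proof: both use $\rho(x)^{\dagger} = -\rho(\tau x)$, \eqref{eq:repindex}, and \eqref{eq:Bnorm} to get $\fnorm{\rho(x)}^{2} = \tfrac{\ell_{\rho}}{2\hv}\norm{x}^{2}$. The inequality \eqref{eq:main}, its optimality, and its equality case then transfer through the injective homomorphism $\rho$ with the constant $(2\hv/\ell_{\rho})(1/\hv) = 2/\ell_{\rho}$.
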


The main interest of Theorems \ref{thm:A} and \ref{thm:B} is that they give a uniform Lie theoretic interpretation of inequalities such as \eqref{eq:bwclassical}; they also yield strictly stronger inequalities in specific cases.
For the defining representation of $\sll(n, \com)$, \eqref{eq:repbound} is \eqref{eq:bwclassical}, and for the vector representation of $\so(n, \com)$ it extends the Bloch--Iserles inequality from real to complex skew-symmetric matrices (not covered by the methods of \cite{Ge-Li-Zhou}; see Section \ref{sec:classical}). Because $\rho(\g)$ is a subalgebra of $\gl(V)$, the classical inequality \eqref{eq:bwclassical} gives the constant $2$ in \eqref{eq:repbound} for every $\rho$; Theorem \ref{thm:B} improves this by the factor $\ell_{\rho}$, which exceeds $1$ except for the defining representations of $\sll(n, \com)$ and $\sp(2n, \com)$ (up to duals and trivial summands). For the exceptional Lie algebras of types $F_{4}$, $E_{6}$, $E_{7}$, and $E_{8}$ the improvement is substantial: for their smallest representations the optimal constants are $\tfrac{1}{3}$, $\tfrac{1}{3}$, $\tfrac{1}{6}$, and $\tfrac{1}{30}$, while for the $7$-dimensional representation of $G_{2}$ it is the Bloch--Iserles constant $1$. These cases are listed in Table \ref{tab:improvement} and discussed in Section \ref{sec:reductive}, where invariant Hermitian norms on reductive Lie algebras are also treated.

The problem has a well understood compact counterpart. By a theorem of Vinberg \cite{Vinberg-invariantnorms}, if $x$ lies in a compact Lie algebra, then the operator norm of $\ad(x)$ with respect to any invariant norm equals the spectral radius of $\ad(x)$, and on $\gz$ the optimal constant in \eqref{eq:main} reduces to a computation with roots, which shows that it equals the reciprocal of the dual Coxeter number. In geometric terms, the maximal sectional curvature of a compact simple Lie group (here and below, a compact connected Lie group with simple Lie algebra $\gz$; its sectional curvature depends only on $\gz$ and the metric) with bi-invariant metric $-B$ is $1/(4\hv)$ \cite[p.~334, Theorem~11.1]{Helgason-dgls}, \cite[Theorem~4]{Meshcheryakov}, and by \cite[Theorems~2 and~4]{Meshcheryakov} the critical points of this sectional curvature with nonzero critical value are the planes contained in subalgebras of $\gz$ isomorphic to $\su(2)$, the critical value being $1/(4j\hv)$ when the complexification of the subalgebra has Dynkin index $j$. On $\g$, however, $\ad(x)$ is in general not normal (for instance when $x$ is nilpotent), and its operator norm is not determined by its spectrum. The content of Theorem \ref{thm:A} is that the optimal constant on $\g$ is nonetheless the same as on $\gz$. 

On $\gz$ this computation is essentially contained in Tasaki's proof \cite{Tasaki-calibration} that a normalized Cartan $3$-form is a calibration, and for $x \in \gz \cup i\gz$ it is the computation of the constants in Wang's quantitative form of the Kazhdan--Margulis theorem \cite{Wang-nilpotent, Adeboye-Wang-Wei} (Section \ref{sec:reform}). This hints at the role of the Cartan $3$-form in the proof of Theorem \ref{thm:A}, although here it was studied for reasons related to the Nahm algebra construction described below.

The function $F(x, y) = \norm{[x, y]}^{2}/(\norm{x}^{2}\norm{y}^{2})$, whose supremum is the optimal constant in \eqref{eq:main}, is closely related to the \emph{sectional nonassociativity} defined in \cite{Fox-sectna}
\begin{align}\label{eq:sectintro}
\sect(x, y) = \frac{\norm{[x, y]}^{2}}{\norm{x}^{2}\norm{y}^{2} - |H(x, y)|^{2}},
\end{align}
which is defined on the Grassmannian of complex $2$-planes in $\g$ and has the same supremum. On $\gz$, $\tfrac{1}{4}\sect$ is the sectional curvature of the bi-invariant metric $-B$, and on complex $2$-planes in $\g$ it is the complex sectional curvature (Section \ref{sec:curvature}). Although apparently trivial, the key observation here is to relate the numerator of $\sect(x, y)$ with the Cartan $3$-form $\om(x, y, z) = B([x, y], z)$. More precisely, the idea is to compare $\norm{[x, y]}^2$ with the twisted Cartan $3$-form 
\begin{align}\label{eq:twistedcartan}
H([x, y], z) = -B([x, y], \tau z) = -\om(x, y, \tau z).
\end{align}
By \eqref{eq:twistedcartan} and the Cauchy--Schwarz inequality, the supremum of \eqref{eq:sectintro} is the squared comass $\comass{\om}^{2}$ of the Cartan $3$-form $\om$, which is the maximum of the function $W(x, y, z) = |\om(x, y, z)|^{2}/(\norm{x}^{2}\norm{y}^{2}\norm{z}^{2})$ on $\g^{3}$ (Section \ref{sec:reform}). 

Theorem \ref{thm:A} is therefore equivalent to Theorem \ref{thm:comass}.
\begin{theorem}\label{thm:comass}
Let $\g$ be a complex simple Lie algebra with dual Coxeter number $\hv$. For compact real form $\gz$ with associated compact conjugation $\tau$ and Hermitian form $H$, the comass of the Cartan $3$-form $\om$ with respect to the Hermitian form $H$ on $\g$ equals its comass with respect to the Euclidean form $-B$ on $\gz$,
\begin{align}\label{eq:comassintro}
\begin{aligned}
\comass{\om} &= \sup\big\{|\om(x, y, z)|: x, y, z \in \g,\ \norm{x} = \norm{y} = \norm{z} = 1\big\}\\
&= \sup\big\{\om(x, y, z): x, y, z \in \gz,\ \norm{x} = \norm{y} = \norm{z} = 1\big\} = (\hv)^{-1/2}.
\end{aligned}
\end{align}
\end{theorem}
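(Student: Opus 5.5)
The plan is to reduce Theorem \ref{thm:comass} to Theorem \ref{thm:A}. The two statements are formally equivalent, and most of the work lies in Theorem \ref{thm:A}, so I will outline both parts.

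\emph{Reduction to the commutator bound.} By \eqref{eq:twistedcartan}, $\om(x,y,z) = -H([x,y],\tau z)$. Since $\tau$ is an $H$-isometry, the Cauchy--Schwarz inequality gives
\begin{align*}
\sup_{\norm{z}=1}|\om(x,y,z)| = \norm{[x,y]},
\end{align*}
and the supremum is attained at $z = -\tau[x,y]/\norm{[x,y]}$. Hence
\begin{align*}
\comass{\om}^{2} = \sup\{\norm{[x,y]}^{2} : \norm{x}=\norm{y}=1\},
\end{align*}
which equals $1/\hv$ by \eqref{eq:main}.

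For the real comass, take $x,y\in\gz$. Then $[x,y]\in\gz$ and $\tau[x,y]=[x,y]$, so the maximizing $z=-[x,y]/\norm{[x,y]}$ lies in $\gz$. There $\om$ is real-valued and changes sign under $z\mapsto -z$, so the real supremum without absolute values equals $\sup_{\gz}\norm{[x,y]}$. It remains to check that this value is $(\hv)^{-1/2}$. By Example \ref{ex:eh} and Lemma \ref{lem:lower}, equality in \eqref{eq:main} is attained inside a $\tau$-stable long-root $\sll(2,\com)$. Choosing an $H$-orthogonal pair in its compact part $\su(2)\subset\gz$ (for instance $i\theta^\vee$ and $e-\tau e$, suitably scaled) gives the value $1/\hv$ already on $\gz$. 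Since $\gz\subset\g$, the real supremum is at most the complex one, and the two therefore coincide.

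\emph{Proof of \eqref{eq:main}, the main obstacle.} I would maximize $F(x,y)=\norm{[x,y]}^2/(\norm{x}^2\norm{y}^2)$, equivalently $W$, over the compact product of unit spheres.

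First, at a maximizer $(x,y,z)$ of $|\om|$, the first-order conditions say that $[y,z]$, $[z,x]$ and $[x,y]$ are proportional to $\tau x$, $\tau y$ and $\tau z$ respectively. In the Nahm algebra language, a suitable rescaling of the triple is a $\tau$-twisted idempotent, and the critical value is read off from the scaling.

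Second, the Hessian at such a point is expressed through $\tau$-twisted multiplication operators $L_u : v\mapsto [u,\tau v]$. These are conjugate-linear, so they anticommute with multiplication by $i$. The second-order condition at a maximum is one-sided, giving an upper bound on a quadratic form built from $L_u$. Replacing $v$ by $iv$ flips the sign of the relevant term, which turns the condition into a two-sided spectral bound on $L_u$. I expect this to force the span of the triple to close up into a $\tau$-stable $\sll(2,\com)$. This closure step is the hardest part, and I would attempt it by showing that the commutator relations among $x$, $y$, $z$ together with the $\tau$-relations give $\sll(2)$ structure constants.

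Once the maximizer lies in a $\tau$-stable $\sll(2,\com)$ of Dynkin index $j$, the critical value is $1/(j\hv)$. This follows from Dynkin's theorem: the restriction of $B$ is $j$ times the normalized form of $\sll(2)$, so one computes inside $\sll(2)$ with the rescaled form. The maximum $1/\hv$ then occurs exactly at $j=1$, which also yields the equality characterization.
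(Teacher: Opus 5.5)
Your reduction of Theorem \ref{thm:comass} to the commutator bound is correct and is the paper's own: Cauchy--Schwarz as in Lemma \ref{fkwmaxlemma}\ref{fkw3}, together with Remark \ref{rem:comass}. For the comass on $\gz$ you use the complex upper bound plus an explicit extremal pair, instead of the Vinberg-based Corollary \ref{thm:compact}\ref{compact1}. That is legitimate, with one correction: $\tau(e-\tau e)=-(e-\tau e)$, so $e-\tau e$ lies in $i\gz$, and you should take $e+\tau e$. However, the two theorems are equivalent, so the reduction carries no content. Everything rests on the bound $\comass{\om}^{2}\le 1/\hv$ on $\g$. Your outline of that bound follows the paper's architecture: twisted idempotents, the Hessian via $\tau$-twisted multiplication, anticommutation with $i$, and the final appeal to $j\ge 1$. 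But it leaves the decisive step, closure into a $\tau$-stable $\sll(2,\com)$, as an expectation, and that is a genuine gap.

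That step cannot come from the relations you propose to use. The relations $[e_{j},e_{k}]=\tfrac12\tau e_{i}$ hold at every twisted idempotent. This includes the root-triangle critical points of Lemma \ref{thm:roottriangle}, whose components generate $\sll(3,\com)$. What must be shown is that $f_{m}:=\tau e_{m}$ lies in $\spn_{\com}\{e_{1},e_{2},e_{3}\}$, and this requires a quantitative use of the second-order condition.

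In the paper (Lemma \ref{lem:rigidity}) this goes as follows.
\begin{enumerate}
\item The Jacobi identity gives $\sum_{a}[e_{a},f_{a}]=0$, hence $\sum_{a}\qad(e_{a})=\sum_{a}\qad(f_{a})$.
\item Evaluating on $f_{m}$ yields the sum rule $\sum_{a}\norm{[e_{a},f_{m}]}^{2}=1/(8\comass{\om}^{2})$.
\item Write $f_{m}$ as its component in the span of the $e_{a}$ plus an $H$-orthogonal part $v$. The sum rule then becomes the identity $\sum_{a}\norm{[e_{a},v]}^{2}=\tfrac12\norm{v}^{2}$.
\item The norm bound $\nnorm{\tau\lin(e)\iota_{i}(v)}\le\tfrac12\norm{v}$, summed over $i$, gives $\sum_{a}\norm{[e_{a},v]}^{2}\le\tfrac38\norm{v}^{2}$.
\item Comparing the two forces $v=0$.
\end{enumerate}

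The two-sided nature of the bound is essential here. On these vectors $\re H(\iota_{i}(v),\tau\lin(e)\iota_{i}(v))=0$, so the one-sided condition says nothing about them. Turning the two-sided quadratic-form bound into a norm bound also uses that $\tau\lin(e)$ is $\re H$-self-adjoint and preserves the $J$-stable complement of the $(\com^{\ast})^{3}$-orbit (Lemma \ref{hess2}). Without this argument or a substitute, your outline does not establish $\comass{\om}\le(\hv)^{-1/2}$.
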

In the language of tensors, \eqref{eq:comassintro} says that the spectral norm of the real $3$-tensor $\om|_{\gz}$ on $(\gz, -B)$ does not change under complexification, which for real tensors of order at least $3$ fails in general (Remark \ref{rem:comass}).

The proof of Theorem \ref{thm:A} goes through characterizing the maximum points of the function $W$. The most complicated part of the argument is that which makes use of the corresponding condition on the Hessian of $W$. These computations are made more understandable and more elegant by taking advantage of a connection with commutative nonassociative algebras, specifically the \emph{Nahm algebra} of $\g$ defined by Kinyon--Sagle \cite{Kinyon-Sagle} (Section \ref{sec:nahm}).

The Killing form $B$, Hermitian metric $H$, and conjugation $\tau$ extend componentwise to a symmetric bilinear form, a Hermitian metric, and a conjugation also denoted by $B$, $H$, and $\tau$ on the threefold product $\g^{3} = \g \times \g \times \g$. The Cartan $3$-form can be viewed as the cubic form on $\g^{3}$
\begin{align}\label{eq:cubic}
C(a) := \om(a_{1}, a_{2}, a_{3}) = B(a_{1}, [a_{2}, a_{3}]), \qquad a = (a_{1}, a_{2}, a_{3}) \in \g^{3},
\end{align}
in terms of which $W(a) = |C(a)|^{2}/(\norm{a_{1}}^{2}\norm{a_{2}}^{2}\norm{a_{3}}^{2})$. The polarization of $C$ is a completely symmetric trilinear form that, together with $B$, determines a commutative multiplication $\tmlt$ on $\g^{3}$. This is a special case of the tripling construction described in \cite{Fox-cubicpoly, Fox-Tkachev}. Up to a constant normalization the algebra $(\g^{3}, \tmlt)$ is the Nahm algebra defined in \cite{Kinyon-Sagle} and the product $\tmlt$ is \begin{align}\label{nahmproduct}
(a_{1}, a_{2}, a_{3})\tmlt (b_{1}, b_{2}, b_{3}) = ([a_{2}, b_{3}] + [b_{2}, a_{3}],[a_{3}, b_{1}] + [b_{3}, a_{1}] ,[a_{1}, b_{2}] + [b_{1}, a_{2}] ).
\end{align}
That $\tmlt$ comes from polarizing the Cartan form is a way of restating the identity
\begin{align}\label{eq:cubicpolar}
B(a\tmlt a, a) = 6C(a), \qquad a \in \g^{3},
\end{align}
which is the case $a = b = c$ of Lemma \ref{lem:nahmbasic}\ref{nahm1}; by \eqref{nahmproduct}, \eqref{eq:cubicpolar} amounts to the cyclic invariance of $\om$.
Here the conjugation $\tau$ is an additional element not previously considered in \cite{Fox-Tkachev, Kinyon-Sagle}, and the Nahm algebra of $\g$ together with $\tau$ provides a convenient and conceptually clear way of organizing the computations needed here: the critical points of $W$ correspond to the set $\idem_{\tau}(\g^{3}, \tmlt)$ of nonzero \emph{$\tau$-twisted idempotents}, the nonzero solutions of $e \tmlt e = \tau e$, and the Hessian of $W$ at a $\tau$-twisted idempotent $e \in \idem_{\tau}(\g^{3}, \tmlt)$ is governed by the multiplication operator of $e$ twisted by $\tau$ (Section \ref{sec:hermitianmetrized}).

\begin{theorem}\label{thm:C}
Let $\g$ be a complex simple Lie algebra with compact conjugation $\tau$ and Nahm algebra $(\g^{3}, \tmlt)$, and extend $H$ and $\tau$ to $\g^{3}$ componentwise.
\begin{enumerate}[label=(\arabic*), ref=(\arabic*)]
\item\label{thmC1} Up to the action of $(\com^{\ast})^{3}$ by rescaling of components, the critical points of $W$ at which $W$ is nonzero are the nonzero $\tau$-twisted idempotents $e \in \idem_{\tau}(\g^{3}, \tmlt)$, and the corresponding critical value is $W(e) = \tfrac{3}{4}H(e, e)^{-1}$.
\item\label{thmC2} Every nonzero $\tau$-twisted idempotent $e \in \idem_{\tau}(\g^{3}, \tmlt)$ satisfies $H(e, e) \geq \tfrac{3}{4}\hv$, with equality if and only if the components of $e$ span a $\tau$-stable subalgebra isomorphic to $\sll(2, \com)$ of Dynkin index $1$.
\end{enumerate}
\end{theorem}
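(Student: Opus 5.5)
For part \ref{thmC1} I would compute the critical points of $\log W(a) = \log|C(a)|^{2} - \sum_{j}\log\norm{a_{j}}^{2}$ on the open set where $C(a) \neq 0$ and all $a_{j} \neq 0$. Since $C$ is holomorphic, the Wirtinger derivative of $\log|C|^{2}$ in a direction $b$ is $dC_{a}(b)/C(a)$. By \eqref{eq:cubicpolar} and its polarization, $dC_{a}(b) = \tfrac{1}{2}B(a\tmlt a, b)$. The derivative of $\log\norm{a_{j}}^{2}$ in the direction $b$ is $H(b_{j}, a_{j})/\norm{a_{j}}^{2} = -B(b_{j}, \tau a_{j})/\norm{a_{j}}^{2}$. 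Comparing these componentwise, the critical point equation reads
\[
\tfrac{1}{2}(a\tmlt a)_{j} = -C(a)\,\tau a_{j}/\norm{a_{j}}^{2}.
\]
The group $(\com^{\ast})^{3}$ preserves $W$. Because $(a\tmlt a)_{j}$ is bilinear in the other two components, I can rescale $a_{j}\mapsto \lambda_{j}a_{j}$ to absorb the factors $-2C(a)/\norm{a_{j}}^{2}$, using conjugate-linearity of $\tau$ to solve for the $\lambda_{j}$. This yields $e\tmlt e = \tau e$. The converse direction is the same computation read backwards. For the critical value, take $e$ with $e\tmlt e = \tau e$. Then $6C(e) = B(e\tmlt e, e) = B(\tau e, e) = -H(e, e)$. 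Componentwise I also get $2[e_{2}, e_{3}] = \tau e_{1}$, which gives $\norm{e_{1}}^{2} = -B(e_{1}, \tau e_{1}) = -2B(e_{1}, [e_{2}, e_{3}]) = -2C(e)$, and the same for every $j$. Hence $\norm{e_{j}}^{2} = H(e, e)/3$, and so $W(e) = (H(e,e)/6)^{2}/(H(e,e)/3)^{3} = \tfrac{3}{4}H(e,e)^{-1}$.

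For part \ref{thmC2}, suppose first that the spanned subalgebra is $\tau$-stable, isomorphic to $\sll(2,\com)$, and of Dynkin index $1$. Then a direct computation in an $\su(2)$-adapted basis gives $H(e,e) = \tfrac{3}{4}\hv$; this uses that $-B$ restricted to a long-root $\su(2)$ is $\hv$ times the normalized form, via Lemma \ref{lem:Bh}. For the inequality itself, I would exploit that $\sup W = \comass{\om}^{2}$. The inequality $H(e,e)\ge \tfrac34\hv$ is equivalent to $W(e)\le 1/\hv$, and this is just the claim of Theorem \ref{thm:comass}. So part \ref{thmC2} cannot be deduced from that theorem; rather, it is the key step in proving it.

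The direct route is the second-order analysis at a global maximum $e$ of $W$, which exists by compactness of the projectivized domain. A global maximum is in particular a $\tau$-twisted idempotent. The Hessian of $\log W$ there has the form
\[
\re B(L_{e}b, b) - H(b,b) + (\text{terms from the normalization}),
\]
where $L_{e}$ is multiplication by $e$ in $(\g^{3},\tmlt)$. The relevant operator is the twisted operator $\tau L_{e}$, which is conjugate-linear and therefore anticommutes with multiplication by $i$. As a result its real spectrum is symmetric under $\lambda\mapsto-\lambda$. The one-sided condition that the Hessian be $\le 0$ then becomes the two-sided bound $|\lambda|\le 1$ on the spectrum of $\tau L_{e}$ on the relevant complement. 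Taking a trace of $(\tau L_{e})^{2}$, which is computable through $\ad(e_{j})\ad(\tau e_{k})$ and Killing form identities, produces an inequality that forces $H(e,e)\ge\tfrac34\hv$. Equality forces equality in the trace estimate, hence rigidity: the $e_{j}$ close up under brackets into an $\sll(2)$ of index $1$.

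Finally, at a non-maximal idempotent I would argue separately. From $2[e_{2},e_{3}]=\tau e_{1}$ and the cyclic versions, the $e_{j}$ together with the $\tau e_{j}$ span a $\tau$-stable subalgebra. I would try to show that this subalgebra is semisimple and that the relations make it a sum of $\sll(2)$ pieces. Each such piece has Dynkin index $j\ge1$, which gives the value $H(e,e) = \tfrac34 j\hv\ge\tfrac34\hv$, and equality then holds only when $j=1$ on a single piece. The main obstacle is this trace/spectral step; it is where I expect most of the work to lie.
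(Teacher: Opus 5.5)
Your part (1) is correct and is essentially the paper's argument (Lemmas \ref{lem:critW} and \ref{thm:idem}). The ``if'' direction of the equality case in (2) is also fine, as is the use of the anticommutation of $\tau\lin(e)$ with $i$ to turn the one-sided Hessian condition into a two-sided bound.

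The gap is the trace step that is supposed to yield $H(e,e)\ge\tfrac34\hv$. First, with your normalization $e\tmlt e=\tau e$ the correct bound is $\nnorm{\tau\lin(e)b}\le\tfrac12\nnorm{b}$ on $\Uperp_{e}$, since the Hessian there is $16W(e)^{2}\re H\big(b,(\tau\lin(e)-\tfrac12)b\big)$; your bound $|\lambda|\le 1$ is off by a factor $2$, and the factor matters below. More seriously, no trace of $(\tau\lin(e))^{2}$ can give the inequality, because it points the wrong way. The operator $(\tau\lin(e))^{2}=\lin(\tau e)\lin(e)=\lin(e)^{\dagger}\lin(e)$ has trace $2\sum_{j}\tr\qad(e_{j})=2H(e,e)$ on $\g^{3}$ and trace $\tfrac32$ on $\Usp_{e}$. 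So the bound yields $2H(e,e)-\tfrac32\le\tfrac34(\dim_{\com}\g-1)$, an \emph{upper} bound $H(e,e)\le\tfrac38(\dim_{\com}\g+1)$, that is, only $\comass{\om}^{2}\ge 2/(\dim_{\com}\g+1)$. Averaging an upper bound on $\tau\lin(e)$ can only bound $H(e,e)$ from above, and $\hv$ cannot arise from a trace over $\g^{3}$. It enters only through $B(h,h)\ge 4\hv$ for an $\sll(2,\com)$-triple (Lemma \ref{lem:Bh}). So rigidity must be proved \emph{before} the inequality, not obtained as its equality case.

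The missing idea is to apply the two-sided bound to specific test vectors rather than averaging it. Put $f=\tau e$. The relations $[e_{j},e_{k}]=\tfrac12 f_{i}$ and $[f_{j},f_{k}]=\tfrac12 e_{i}$, together with the Jacobi identity, give $\sum_{a}[e_{a},f_{a}]=0$, hence $\sum_{a}\qad(e_{a})=\sum_{a}\qad(f_{a})$. Evaluate this at $f_{m}$ and subtract from $f_{m}$ its component in $\spn_{\com}\{e_{1},e_{2},e_{3}\}$. This produces $v$ with $H(v,e_{a})=0$ and $\sum_{a}\norm{[e_{a},v]}^{2}=\tfrac12\norm{v}^{2}$. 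On the other hand, the bound $\tfrac12$ applied to $\iota_{i}(v)$ and summed over $i$ gives $\sum_{a}\norm{[e_{a},v]}^{2}\le\tfrac38\norm{v}^{2}$; with your $|\lambda|\le1$ there would be no contradiction. Hence $v=0$, and the span of the components is $\tau$-stable and closed under brackets. By Lemma \ref{thm:idem}\ref{idem3} it is then a $\tau$-stable $\sll(2,\com)$. By \eqref{eq:sl2norm}, $W(e)=F(e_{1},e_{2})=1/(j\hv)$ with $j\ge1$, and the example $\subs_{\theta}$ forces $j=1$ and $\max W=1/\hv$.

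Your final paragraph is then unnecessary: every $\tau$-twisted idempotent satisfies $3/(4H(e,e))=W(e)\le\max W$, with equality only at maximizers. It is also wrong as planned. The span of the $e_{j}$ and $\tau e_{j}$ is in general not a subalgebra, and non-maximal idempotents need not be built from copies of $\sll(2,\com)$. For example, the rescaled root vectors of three long roots with sum zero (Lemma \ref{thm:roottriangle}) generate a copy of $\sll(3,\com)$, and for short roots of $G_{2}$ they generate all of $\g$.
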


In Theorem \ref{thm:C}, maximum points of $W$ correspond to $\tau$-twisted idempotents of minimal norm. The first order and second order conditions for a maximum at such an idempotent are what the proof of Theorem \ref{thm:A} uses. The second order condition says that the $\tau$-twisted multiplication operator $\tau\lin(e)$, where $\lin(e)b := e \tmlt b$, is bounded above by $\tfrac{1}{2}$ on the complement of the directions tangent to the $(\com^{\ast})^{3}$-orbit; because $\tau\lin(e)$ anticommutes with multiplication by $i$, it is then also bounded below by $-\tfrac{1}{2}$ (Lemma \ref{thm:hessW}). Combined with the Jacobi identity, this two-sided bound forces the components of $e$ to span a $\tau$-stable subalgebra isomorphic to $\sll(2, \com)$ (Section \ref{sec:sharp}), and the proof is completed by the elementary inequality $B(h, h) \geq 4\hv$ for the neutral element $h$ of an $\sll(2, \com)$-triple (Lemma \ref{lem:Bh}). The two-sided bound reflects the strict plurisubharmonicity of $-\log W$ on $\{C \neq 0\} \subset \proj(\g)^{3}$, that is, the positivity of the line bundle $\mathcal{O}(1, 1, 1)$ of which $C$ is a section (Section \ref{sec:hermitianmetrized}). The anticommutation with multiplication by $i$ also shows that at every critical point where $W \neq 0$ the Hessian of $W$ on $\proj(\g)^{3}$ has at least $3\dim\g - 3$ negative eigenvalues, so that no such point is a local minimum (Lemma \ref{lem:indexbound}).

The same framework applies to the analysis of critical points that are not maxima. Every subalgebra of $\g$ isomorphic to $\sll(2, \com)$ is conjugate to a $\tau$-stable one, and a $\tau$-stable subalgebra of Dynkin index $j$ yields critical points of $F$, $\sect$, and $W$ with critical value $1/(j\hv)$. The spectrum of the Hessian of $W$ at such a critical point is determined by the decomposition of $\g$ under the subalgebra, through a spin-orbit coupling computation, and the critical point is a local maximum if and only if $j = 1$ (Section \ref{sec:critical}). At such a critical point the number of negative eigenvalues of the Hessian on $\proj(\g)^{3}$ equals the lower bound $3\dim\g - 3$ exactly when the subalgebra is distinguished. For the Cartan $3$-form on a compact simple Lie group this extends the observation of Hitchin \cite{Hitchin-calibrations} that the principal three-dimensional subalgebra is a critical point (Corollary \ref{cor:hitchin}). The same computation shows that the set of maximum points of $W$ in $\proj(\g)^{3}$ is a nondegenerate critical manifold in the sense of Bott, fibered over the space of $\tau$-stable subalgebras of Dynkin index $1$ with fiber the manifold of complete flags in $\com^{3}$ (Corollary \ref{cor:morsebott}). Root vectors of three long roots with sum zero give critical points of another type, with critical value $1/(2\hv)$ (Lemma \ref{thm:roottriangle}); the plane spanned by two such root vectors is a critical point of $\sect$ not contained in any subalgebra isomorphic to $\sll(2, \com)$, so the description of the critical points recalled above for real planes does not extend to complex planes (Remark \ref{rem:meshcheryakov}).

Geometrically, the equality of complex and real comass says that the maximal complex sectional curvature of a compact simple Lie group with the bi-invariant metric $-B$ equals its maximal sectional curvature $1/(4\hv)$ (Corollary \ref{cor:groupcurvature}); for irreducible compact symmetric spaces, Theorem \ref{thm:A} gives an upper bound for the complex sectional curvature that is sharp only for some of them (Section \ref{sec:curvature}).

The paper is organized as follows. Section \ref{sec:prelim} collects preliminaries on the compact conjugation, root systems, and subalgebras isomorphic to $\sll(2, \com)$. Section \ref{sec:reform} introduces $F$, $\sect$, and $W$, relates them to the comass of the Cartan form, and records the results of Vinberg and Tasaki on the compact real form. Section \ref{sec:nahm} develops the Nahm algebra with conjugation, the correspondence between critical points of $W$ and $\tau$-twisted idempotents, the Hessian of $W$, and its interpretation through plurisubharmonicity, and uses them to prove Theorems \ref{thm:A} and \ref{thm:C} (Section \ref{sec:sharp}). Section \ref{sec:reductive} treats arbitrary invariant Hermitian norms, reductive Lie algebras, and representations, proves Theorem \ref{thm:B}, derives the classical matrix inequalities, and interprets Theorem \ref{thm:A} in terms of the complex sectional curvature of compact simple Lie groups and compact symmetric spaces. The final Section \ref{sec:critical} studies critical points that need not be maxima, their Hessians, the structure of the set of maximum points, and Hitchin's critical point of Tasaki's calibration.

\section{Preliminaries}\label{sec:prelim}
Throughout, $\g$ is a complex simple Lie algebra with Killing form $B$, $\gz$ is a compact real form of $\g$, $\tau$ is the corresponding compact conjugation (meaning $\tau$ is a conjugate-linear order $2$ Lie algebra automorphism whose fixed subspace is $\gz$), and $H(x, y) = -B(x, \tau y)$, with $\norm{x}^{2} = H(x, x)$.

The following standard facts are used repeatedly. For all $a, b, x, y, z \in \g$:
\begin{enumerate}
\item $B(\tau a, \tau b) = \overline{B(a, b)}$ and $B(a, b) = -H(a, \tau b)$.
\item $H$ is a positive definite Hermitian form, linear in the first and conjugate-linear in the second argument; and $H(\tau a, \tau b) = \overline{H(a, b)}$, so that $\norm{\tau x} = \norm{x}$.
\item The Cartan form $\om(x, y, z) := B([x, y], z)$ is complex trilinear and alternating, and
\begin{align}\label{eq:omegaformulas}
\om(x, y, z) = B(x, [y, z]) = B(y, [z, x]) = -H([x, y], \tau z).
\end{align}
\item $H([z, a], b) = -H(a, [\tau z, b])$; that is, the $H$-adjoint of $\ad(z)$ satisfies
\begin{align}\label{basic4}
\ad(z)^{\dagger} = -\ad(\tau z).
\end{align}

\item By \eqref{basic4} the operator
\begin{align}\label{eq:qaddefined}
\qad(x):=\ad(x)^\dagger\ad(x) = -\ad(\tau x)\ad(x)
\end{align}
is $H$-Hermitian and nonnegative, and satisfies
\begin{align}\label{basic5}
H(\qad(x)y, y) = \norm{[x, y]}^{2} = H(\qad(y)x, x).
\end{align}
\end{enumerate}

Let $\mathfrak{t}_{0} \subset \gz$ be a maximal abelian subalgebra and $\h = \mathfrak{t}_{0} \otimes \com$, a Cartan subalgebra of $\g$ with root system $\Phi$. Since $\tau t = t$ for $t \in \mathfrak{t}_{0}$, $\ad(t)$ is skew-Hermitian by \eqref{basic4}, so the roots take imaginary values on $\mathfrak{t}_{0}$; consequently, for $X \in \g_{\mu}$ and $t \in \mathfrak{t}_{0}$, $[t, \tau X] = \tau[t, X] = \overline{\mu(t)}\tau X = -\mu(t)\tau X$, that is,
\begin{align}\label{eq:tauroot}
\tau\g_{\mu} = \g_{-\mu}, \qquad \mu \in \Phi.
\end{align}
Let $(\dum, \dum)$ be the multiple of $B$ for which long roots have squared length $2$, let $\theta$ be the highest root for some choice of positive roots $\Phi^{+}$, and for $\mu \in \Phi$ let $\mu^{\vee} \in \h$ be the coroot, so that $(\mu^{\vee}, h) = 2\mu(h)/(\mu, \mu)$ for $h \in \h$. 

The dual Coxeter number $\hv$ is defined to be half the eigenvalue on the adjoint representation of the Casimir operator defined using $(\dum, \dum)$. Equivalently \cite[(14.30) and (14.32)]{Fuchs-Schweigert}, the trace form of the adjoint representation is $2\hv$ times the normalized form,
\begin{align}\label{eq:Bnorm}
B(u, v) = 2\hv\,(u, v), \qquad u, v \in \g.
\end{align}
With $2\delta = \sum_{\al \in \Phi^{+}}\al$, there holds \cite[Section~32.4]{Etingof-liegroups}
\begin{align}\label{eq:hvweights}
\hv = 1 + \frac{(\theta, 2\delta)}{(\theta, \theta)},
\end{align}
and \eqref{eq:hvweights} gives the values of $\hv$ listed in Table \ref{tab:improvement}.

Let $G_{0}$ be the adjoint group of $\gz$, a compact connected group acting on $\g$ by complex linear automorphisms preserving $\gz$, hence commuting with $\tau$ and preserving $B$ and $H$. Every element of $\gz$ is $\Ad(G_{0})$-conjugate to an element of $\mathfrak{t}_{0}$, and the Weyl group, which acts transitively on the long roots, is realized by the normalizer of $\mathfrak{t}_{0}$ in $G_{0}$. 

The following standard facts are used (see \cite{Humphreys}).
\begin{enumerate}[label=(R\arabic*), ref=(R\arabic*)]
\item\label{R1} $(\theta,\theta)=2$; $\mu(h)\le\theta(h)$ for every root $\mu$ and every dominant $h$ in the real span of the coroots; every root $\mu$ satisfies $(\mu,\mu)\le2$, with equality if and only if $\mu$ is long.
\item\label{R2} $\dim\g_\mu=1$ for every root $\mu$; $B(\g_\mu,\g_\nu)=0$ unless $\mu+\nu=0$; for $e\in\g_\mu$ and $f\in\g_{-\mu}$, $[e,f]=B(e,f)\,t_\mu$, where $t_{\mu} \in \h$ is defined by $B(t_\mu,h)=\mu(h)$ for $h\in\h$.
\item\label{R3} Every semisimple element of $\g$ is conjugate under the adjoint group, which preserves $B$, to an element of $\h$, and every element of the real span of the coroots is conjugate under the Weyl group to a dominant element.
\end{enumerate}
By \eqref{eq:Bnorm}, $t_{\mu} = (\mu, \mu)\mu^{\vee}/(4\hv)$, and in particular $t_{\theta} = \theta^{\vee}/(2\hv)$. Since the roots are imaginary on $\mathfrak{t}_{0}$, the element $t_{\al}$ of \ref{R2} satisfies $t_{\al} = i s_{\al}$ with $s_{\al} \in \mathfrak{t}_{0}$, and by \eqref{eq:Bnorm},
\begin{align}\label{eq:salpha}
\al(x) = iB(s_{\al}, x) \text{ for } x \in \mathfrak{t}_{0}, \qquad \norm{s_{\al}}^{2} = B(t_{\al}, t_{\al}) = \frac{(\al, \al)}{2\hv} \leq \frac{1}{\hv},
\end{align}
with equality if and only if $\al$ is long.

An $\sll(2,\com)$-\emph{triple} is a triple $(e,f,h)$ of elements of $\g$ with $e \neq 0$ and satisfying
\begin{align}\label{eq:sl2triple}
[h,e]&=2e, & [h,f]&=-2f, & [e,f]&=h.
\end{align}
For a root $\mu \in \Phi$, the \emph{root subalgebra} attached to $\mu$ is
\begin{align}\label{eq:rootsubalgebra}
\subs_{\mu} := \g_{\mu} \oplus \com\mu^{\vee} \oplus \g_{-\mu}.
\end{align}
By \ref{R2} it is a subalgebra isomorphic to $\sll(2, \com)$, and by \eqref{eq:tauroot} it is $\tau$-stable. A \emph{long-root subalgebra} of $\g$ is a subalgebra of the form $\Ad(g)\subs_{\mu}$ with $g$ in the adjoint group of $\g$ and $\mu$ a long root; since every Cartan subalgebra of $\g$ is conjugate to $\h$ under the adjoint group, these are exactly the root subalgebras attached to long roots with respect to some Cartan subalgebra of $\g$.

For a complex simple Lie algebra $\mathfrak{a}$, let $(\dum, \dum)_{\mathfrak{a}}$ denote its \emph{normalized form}, the invariant symmetric bilinear form for which the long roots have squared length $2$; thus $(\dum, \dum) = (\dum, \dum)_{\g}$. If $\varphi:\mathfrak{a} \to \mathfrak{b}$ is an injective homomorphism of complex simple Lie algebras, the pullback of $(\dum, \dum)_{\mathfrak{b}}$ by $\varphi$ is an invariant symmetric bilinear form on the simple Lie algebra $\mathfrak{a}$, so it is a multiple of $(\dum, \dum)_{\mathfrak{a}}$. Dynkin \cite[Theorem~2.2]{Dynkin-semisimplesubalgebras} proved that this multiple is a positive integer $j(\varphi)$, so that
\begin{align}\label{eq:dynkinpullback}
(\varphi(x), \varphi(y))_{\mathfrak{b}} = j(\varphi)\,(x, y)_{\mathfrak{a}}, \qquad x, y \in \mathfrak{a}.
\end{align}

\begin{definition}\label{def:dynkin}
The integer $j(\varphi)$ of \eqref{eq:dynkinpullback} is the \emph{Dynkin index} of $\varphi$. The Dynkin index $j(\subs)$ of a simple subalgebra $\subs \subset \g$ is the Dynkin index of the inclusion $\subs \hookrightarrow \g$. The Dynkin index $\ell_{\rho}$ of a nonzero representation $\rho:\g \to \gl(V)$ is the Dynkin index of $\rho$ regarded as a homomorphism $\g \to \sll(V)$.
\end{definition}

In Definition \ref{def:dynkin}, $\rho$ is injective because $\g$ is simple, so $\dim V \geq 2$ because $\g$ is not abelian, and $\rho(\g) \subset \sll(V)$ because $\g = [\g, \g]$. Integrality of the Dynkin index is not used in the proofs of Theorems \ref{thm:A}--\ref{thm:C}; what is used there is the following description of it.

\vfill
\begin{lemma}\label{lem:dynkinformula}
\leavevmode\noindent
\begin{enumerate}[label=(\arabic*), ref=(\arabic*)]
\item\label{dynkinformula1} The normalized form of $\sll(n, \com)$, $n \geq 2$, is $(X, Y)_{\sll(n, \com)} = \tr XY$. Consequently, for a nonzero representation $\rho:\g \to \gl(V)$,
\begin{align}\label{eq:repindex}
\tr \rho(x)\rho(y) = \ell_{\rho}\,(x, y), \qquad x, y \in \g.
\end{align}
\item\label{dynkinformula2} If $\subs \subset \g$ is a subalgebra isomorphic to $\sll(2, \com)$ and $(e, f, h)$ is an $\sll(2, \com)$-triple spanning $\subs$, then
\begin{align}\label{eq:dynkinindex}
j(\subs) = \frac{(h, h)}{2} = \frac{B(h, h)}{4\hv}.
\end{align}
\end{enumerate}
\end{lemma}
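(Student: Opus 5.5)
For part (1), I would compute the normalized form of $\sll(n, \com)$ directly from the root data. With the diagonal Cartan subalgebra, the roots are $\epsilon_{i} - \epsilon_{j}$. Their coroots are $E_{ii} - E_{jj}$, and all roots are long.

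The form $(X, Y) = \tr XY$ is invariant and nondegenerate. Because $\sll(n, \com)$ is simple, every invariant symmetric form is a multiple of it, so it only remains to check the normalization. The form $\tr$ identifies $\h$ with $\h^{*}$ by sending $h \mapsto \tr(h\,\cdot)$. Under this identification the root $\epsilon_{i} - \epsilon_{j}$ corresponds to $E_{ii} - E_{jj}$. Its squared length is therefore $\tr (E_{ii} - E_{jj})^{2} = 2$, as required. (For $n = 2$ this is the same computation.)

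The consequence \eqref{eq:repindex} then follows from the definitions. Fix a basis of $V$, so that $\sll(V) \cong \sll(n, \com)$ with normalized form $\tr XY$. By Definition \ref{def:dynkin} and \eqref{eq:dynkinpullback} applied to $\rho:\g \to \sll(V)$, we get $\tr \rho(x)\rho(y) = \ell_{\rho}(x, y)$. Here we use the remark after Definition \ref{def:dynkin} that $\rho$ is injective with image in $\sll(V)$.

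For part (2), let $\varphi: \sll(2, \com) \to \subs$ be the isomorphism sending the standard triple $(E_{12}, E_{21}, E_{11} - E_{22})$ to $(e, f, h)$. I would apply \eqref{eq:dynkinpullback} to $x = y = E_{11} - E_{22}$. By part (1), $(E_{11} - E_{22}, E_{11} - E_{22})_{\sll(2, \com)} = 2$, so $(h, h) = 2j(\subs)$.

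Two points need care. First, the Dynkin index of $\subs$ must be independent of the chosen triple. This holds because $j(\subs)$ is defined from the inclusion and the normalized form on $\subs$, and the normalized form is intrinsic, i.e. invariant under all automorphisms. Any isomorphism $\sll(2, \com) \to \subs$ therefore gives the same pullback. Second, the triple $(e, f, h)$ does determine an isomorphism from $\sll(2, \com)$. Since $e \neq 0$ and $[h, e] = 2e$, the relation $[e, f] = h$ forces $h \neq 0$ and $f \neq 0$, so $e, f, h$ are linearly independent eigenvectors of $\ad h$ in $\subs$ and span the $3$-dimensional $\subs$.

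Finally, \eqref{eq:Bnorm} gives $B(h, h) = 2\hv (h, h)$, which yields $j(\subs) = B(h, h)/(4\hv)$. The only mildly delicate step is the normalization check in part (1); everything else is unwinding definitions.
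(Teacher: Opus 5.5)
Your proposal is correct and follows the paper's proof essentially step for step. You check the normalization via $E_{ii}-E_{jj}$ with $\tr (E_{ii}-E_{jj})^{2}=2$, then pull back along the isomorphism $\varphi$ determined by the triple, and finish with \eqref{eq:Bnorm}. Your extra details, the linear independence of $e,f,h$ and the transport of normalized forms under isomorphisms, are exactly what the paper compresses into ``$\varphi$ is an isomorphism'' and ``$\varphi$ preserves normalized forms.''
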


\begin{proof}
For the diagonal Cartan subalgebra of $\sll(n, \com)$ and the root $\al = \ep_{a} - \ep_{b}$, $a \neq b$, where $\ep_{a}(H) = H_{aa}$, the element $t = E_{aa} - E_{bb}$ satisfies $\tr(tH) = \al(H)$ for diagonal $H$, so the length of $\al$ for the form $\tr XY$ is $(\al, \al) = \tr t^{2} = 2$. All roots of $\sll(n, \com)$ are long, so $\tr XY$ is the normalized form, and \eqref{eq:repindex} is \eqref{eq:dynkinpullback} for $\rho:\g \to \sll(V)$. This shows \ref{dynkinformula1}.

The linear map $\varphi:\sll(2, \com) \to \subs$ sending $E_{12}$, $E_{21}$, $E_{11} - E_{22}$ to $e$, $f$, $h$ is an isomorphism of Lie algebras by \eqref{eq:sl2triple}, and $j(\subs)$ is the Dynkin index of the composition of $\varphi$ with the inclusion $\subs \hookrightarrow \g$, because $\varphi$ preserves normalized forms. By \ref{dynkinformula1} and \eqref{eq:dynkinpullback}, $(h, h) = j(\subs)\tr(E_{11} - E_{22})^{2} = 2j(\subs)$, and $B(h, h) = 2\hv(h, h)$ by \eqref{eq:Bnorm}. This shows \ref{dynkinformula2}.
\end{proof}

\begin{lemma}\label{lem:Bh}
If $(e,f,h)$ is an $\sll(2,\com)$-triple in $\g$, then $B(h,h)\ge4\hv$, with equality if and only if $h$ is conjugate under the adjoint group to $\theta^\vee$. For a root $\mu$, $j(\subs_{\mu}) = 2/(\mu, \mu)$, which equals $1$ if and only if $\mu$ is long. For a subalgebra $\subs \subset \g$ isomorphic to $\sll(2, \com)$ the following are equivalent: $j(\subs) = 1$; $\subs$ is conjugate under the adjoint group to $\subs_{\theta}$; $\subs$ is a long-root subalgebra.
\end{lemma}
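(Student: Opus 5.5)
Since $(e,f,h)$ spans a copy of $\sll(2,\com)$, $h$ acts semisimply on every finite-dimensional representation, in particular on $\g$. So $h$ is a semisimple element of $\g$. By \ref{R3} we may conjugate $h$ under the adjoint group, which preserves $B$, into $\h$. The eigenvalues of $\ad(h)$ are integers, so $\mu(h)\in\integer$ for every root $\mu$. Hence $h$ lies in the real span of the coroots, and after a further Weyl group conjugation (again \ref{R3}) we may assume $h$ is dominant. Since $e\neq 0$ and $[h,e]=2e$, the value $2$ is an eigenvalue of $\ad(h)$. Thus there is a root $\mu$ with $\mu(h)=2$, and \ref{R1} gives $\theta(h)\ge\mu(h)=2$.

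Next compare $h$ with $\theta^\vee$. The form $(\dum,\dum)$ is positive definite on the real span of the coroots, and $(\theta^\vee,h)=2\theta(h)/(\theta,\theta)=\theta(h)$. Cauchy--Schwarz then gives
\begin{align*}
2\le\theta(h)=(\theta^\vee,h)\le(\theta^\vee,\theta^\vee)^{1/2}(h,h)^{1/2}=\sqrt{2}\,(h,h)^{1/2},
\end{align*}
using $(\theta^\vee,\theta^\vee)=4/(\theta,\theta)=2$. Hence $(h,h)\ge 2$, and by \eqref{eq:Bnorm} $B(h,h)=2\hv(h,h)\ge 4\hv$.

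Equality forces equality in Cauchy--Schwarz, so $h=c\,\theta^\vee$ with $c>0$; combined with $\theta(h)=2$ this gives $c=1$ and $h=\theta^\vee$. Conversely, if $h$ is conjugate to $\theta^\vee$, then $B(h,h)=2\hv(\theta^\vee,\theta^\vee)=4\hv$. The main subtlety in this part is justifying that $h$ is semisimple with integral root values. That is standard $\sll(2)$ representation theory (complete reducibility and the weights of $\ad h$ being integers), and I would invoke it directly.

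For the root subalgebra $\subs_\mu$, choose $e\in\g_\mu$ and $f\in\g_{-\mu}$ normalized so that $h=\mu^\vee$ completes the triple; this is possible by \ref{R2}, since $[e,f]$ is a multiple of $t_\mu$, which is proportional to $\mu^\vee$. Lemma \ref{lem:dynkinformula}\ref{dynkinformula2} then gives $j(\subs_\mu)=(\mu^\vee,\mu^\vee)/2=2/(\mu,\mu)$, which equals $1$ exactly when $(\mu,\mu)=2$, that is, when $\mu$ is long, by \ref{R1}.

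For the equivalences:

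\emph{$j(\subs)=1$ implies $\subs$ is conjugate to $\subs_\theta$.} By \eqref{eq:dynkinindex}, $j(\subs)=1$ means $B(h,h)=4\hv$, so $h$ is conjugate to $\theta^\vee$. Conjugate so that $h=\theta^\vee$. The $2$-eigenspace of $\ad(\theta^\vee)$ is exactly $\g_\theta$: for a root $\nu$, $\nu(\theta^\vee)=(\nu,\theta)\le\sqrt{(\nu,\nu)(\theta,\theta)}\le 2$, with equality only for $\nu=\theta$. Likewise the $(-2)$-eigenspace is $\g_{-\theta}$. So $e\in\g_\theta$ and $f\in\g_{-\theta}$, and $\subs=\subs_\theta$.

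\emph{$\subs_\theta$ is a long-root subalgebra.} This holds by definition, since $\theta$ is long.

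\emph{A long-root subalgebra has Dynkin index $1$.} The Dynkin index is invariant under conjugation, and $j(\subs_\mu)=1$ for long $\mu$ by the previous paragraph.
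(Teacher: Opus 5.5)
Your proof is correct and follows the paper's argument essentially step for step: you conjugate $h$ to a dominant element of the real span of the coroots, apply Cauchy--Schwarz against $\theta^\vee$, and identify the $\pm 2$-eigenspaces of $\ad(\theta^\vee)$ with $\g_{\pm\theta}$. The one difference is in the implication ``long-root subalgebra $\Rightarrow j(\subs)=1$'': you combine conjugation invariance of the Dynkin index with $j(\subs_{\mu})=2/(\mu,\mu)$ directly, whereas the paper first conjugates $\subs_{\mu}$ to $\subs_{\theta}$ using Weyl-group transitivity on long roots; your route is slightly shorter and equally valid.
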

That $j(\subs) \geq 1$ also follows from Dynkin's integrality theorem; the following short argument avoids integrality and identifies the equality case.
\begin{proof}
The operator $\ad(h)$ is diagonalizable with integer eigenvalues, because $\g$ is a finite-dimensional module for $\spn\{e, f, h\}$; in particular $h$ is semisimple. By \ref{R3}, after conjugating the whole triple $(e, f, h)$ by an element of the adjoint group, which preserves $B$ and the relations \eqref{eq:sl2triple}, it can be assumed that $h \in \h$. The roots take integer values on $h$, because these values are eigenvalues of $\ad(h)$, so $h$ lies in the real span of the coroots, and by the second part of \ref{R3}, after conjugating the triple by an element of the adjoint group representing an element of the Weyl group, $h$ can be assumed dominant. Since $[h,e]=2e$ with $e\ne0$, $\mu(h)=2$ for some root $\mu$, so $\theta(h)\ge2$ by \ref{R1}. By the Cauchy--Schwarz inequality for the positive definite form $(\dum, \dum)$ on the real span of the coroots,
\begin{align}\label{eq:CSh}
4 \leq \theta(h)^2=(\theta^\vee,h)^2\le(\theta^{\vee},\theta^{\vee})(h,h)=2(h,h),
\end{align}
so $(h,h)\ge2$ and $B(h,h)=2\hv(h,h)\ge4\hv$ by \eqref{eq:Bnorm}. If $(h, h) = 2$, then equality holds throughout \eqref{eq:CSh}, so $h = c\theta^{\vee}$ with $c > 0$ and $2 = \theta(h) = 2c$. Conversely $B(\theta^\vee,\theta^\vee)=2\hv(\theta^{\vee},\theta^{\vee}) = 4\hv$. If $h = \theta^{\vee}$, then $e \in \g_{\theta}$ and $f \in \g_{-\theta}$, because $\theta$ is the only root taking the value $2$ on $\theta^{\vee}$ (the equality case of \eqref{eq:CSh} applied to the roots $\mu$ with $\mu(\theta^{\vee}) = 2$). Together with \eqref{eq:dynkinindex}, this shows that a subalgebra $\subs \simeq \sll(2, \com)$ has $j(\subs) = 1$ if and only if it is conjugate to $\subs_{\theta}$.

For the remaining claims, $\subs_{\mu}$ is spanned by an $\sll(2, \com)$-triple with neutral element $\mu^{\vee}$, and $(\mu^{\vee}, \mu^{\vee}) = 4/(\mu, \mu)$, so $j(\subs_{\mu}) = 2/(\mu, \mu)$ by \eqref{eq:dynkinindex}, and this is $1$ if and only if $\mu$ is long by \ref{R1}. The Dynkin index is invariant under conjugation. If $j(\subs) = 1$, then $\subs$ is conjugate to $\subs_{\theta}$, which is a long-root subalgebra by definition. Conversely, if $\subs = \Ad(g)\subs_{\mu}$ with $\mu$ long, then, because the Weyl group acts transitively on the long roots and is realized by the normalizer of $\mathfrak{t}_{0}$ in $G_{0}$, there is $n \in G_{0}$ with $\Ad(n)\subs_{\mu} = \subs_{\theta}$; hence $\subs$ is conjugate to $\subs_{\theta}$ and $j(\subs) = j(\subs_{\theta}) = 1$.
\end{proof}

\begin{lemma}\label{lem:sl2str}
Let $\subs\subset\g$ be a $\tau$-stable subalgebra isomorphic to $\sll(2, \com)$, with Dynkin index $j$, and let $\beta := j\hv$.
\begin{enumerate}[label=(\arabic*), ref=(\arabic*)]
\item\label{sl2a} $\subs_0:=\subs\cap\gz$ is a compact real form of $\subs$, and it has a basis $z_1,z_2,z_3$ such that
\begin{align}\label{eq:xirelations}
[z_{1}, z_{2}] &= z_{3}, \quad [z_{2}, z_{3}] = z_{1}, \quad [z_{3}, z_{1}] = z_{2}, & B(z_a,z_b) &= -\beta\,\delta_{ab}, & a, b &\in \{1, 2, 3\}.
\end{align}
For such a basis $(e, f, h) := (z_{1} + iz_{2}, -z_{1} + iz_{2}, 2iz_{3})$ is an $\sll(2, \com)$-triple with $\tau e = -f$ and $\tau h = -h$.
\item\label{sl2b} For all $x, y \in \subs$,
\begin{align}\label{eq:sl2norm}
\norm{[x,y]}^2 = \frac{1}{\beta}\Big(\norm{x}^{2}\norm{y}^{2} - |H(x, y)|^{2}\Big).
\end{align}
\end{enumerate}
\end{lemma}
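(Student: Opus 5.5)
For part (1), I would first check that $\subs_0 = \subs \cap \gz$ is a compact real form of $\subs$. Since $\tau$ preserves $\subs$, its restriction is a conjugate-linear involutive automorphism of $\subs$, and $\subs = \subs_0 \oplus i\subs_0$. The Killing form of $\subs$ is a positive multiple of the restriction of $B$, because $B|_{\subs}$ is invariant and nondegenerate: it is $j$ times $2\hv$ times the normalized form by \eqref{eq:Bnorm} and \eqref{eq:dynkinpullback}. Since $B$ is negative definite on $\gz \supset \subs_0$, so is the Killing form of $\subs$ on $\subs_0$, and $\subs_0$ is compact.

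A compact real form of $\sll(2,\com)$ is isomorphic to $\su(2) \simeq \so(3)$. I would therefore pick a basis $z_a$ with $[z_1,z_2]=z_3$ and the cyclic relations, for example $z_a = -\tfrac{i}{2}\sigma_a$ in terms of Pauli matrices transported by an isomorphism. Any invariant form on $\so(3)$ is a multiple of the standard negative definite one, so $B(z_a,z_b) = -c\,\delta_{ab}$ with $c>0$.

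To identify $c$, I would set $h = 2iz_3$, $e = z_1 + iz_2$, $f = -z_1 + iz_2$ and check \eqref{eq:sl2triple} directly:
\begin{itemize}
\item $[h,e] = 2i([z_3,z_1] + i[z_3,z_2]) = 2i(z_2 - iz_1) = 2e$;
\item $[h,f] = -2f$, by the same computation;
\item $[e,f] = [z_1 + iz_2, -z_1 + iz_2] = i[z_1,z_2] - i[z_2,z_1] = 2iz_3 = h$.
\end{itemize}
Then $B(h,h) = -4B(z_3,z_3) = 4c$, while Lemma \ref{lem:dynkinformula}\ref{dynkinformula2} gives $B(h,h) = 4\hv j$, so $c = \beta$. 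The identities $\tau e = z_1 - iz_2 = -f$ and $\tau h = -h$ follow because $\tau$ fixes each $z_a$ and is conjugate-linear.

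For part (2), write $x = \sum x_a z_a$ and $y = \sum y_a z_a$ with $x_a, y_a \in \com$. Because $\tau z_a = z_a$, we have $H(z_a,z_b) = -B(z_a,z_b) = \beta\,\delta_{ab}$, so $H(x,y) = \beta\sum x_a\bar y_a$. The bracket is the cross product in these coordinates, $[x,y] = \sum (x\times y)_a z_a$, by the relations \eqref{eq:xirelations}. Hence $\norm{[x,y]}^2 = \beta\,\nnorm{x\times y}^2$, where the norm on the right is the standard Hermitian norm on $\com^3$.

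The only genuine check is the complex Lagrange identity
\[
\nnorm{x\times y}^2 = \nnorm{x}^2\nnorm{y}^2 - \nnorm{\langle x,y\rangle}^2, \qquad \langle x,y\rangle = \sum x_a\bar y_a .
\]
It follows from expanding $\sum_{a<b}\nnorm{x_ay_b - x_by_a}^2$, equivalently from the Binet--Cauchy identity with conjugated entries. Substituting gives
\[
\norm{[x,y]}^2 = \beta\Big(\beta^{-2}\norm{x}^2\norm{y}^2 - \beta^{-2}\nnorm{H(x,y)}^2\Big),
\]
which is \eqref{eq:sl2norm}.

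The main point to get right is the normalization $c = \beta$, which rests on Lemma \ref{lem:dynkinformula}; the rest is routine computation.
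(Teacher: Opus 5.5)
Your proposal is correct and follows essentially the same route as the paper: identify $\subs_{0}$ as a compact form isomorphic to $\su(2)$, fix $B(z_{a}, z_{b}) = -\beta\delta_{ab}$ via $B(2iz_{3}, 2iz_{3})$ and Lemma \ref{lem:dynkinformula}\ref{dynkinformula2}, and reduce \eqref{eq:sl2norm} to the complex Lagrange identity for the cross product. The only differences are cosmetic: you cite uniqueness of invariant forms on $\so(3)$ where the paper obtains the diagonal form directly from invariance and the cyclic relations, and you argue compactness through the Killing form of $\subs$ where the paper uses $-B|_{\subs_{0}}$ directly.
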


\begin{proof}
Because $\tau\subs = \subs$, $\subs = \subs_{0} \oplus i\subs_{0}$, and $-B$ is a positive definite invariant inner product on $\subs_{0}$, so $\subs_{0}$ is a three-dimensional compact semisimple Lie algebra, hence isomorphic to $\su(2)$, which has a basis satisfying the first relations of \eqref{eq:xirelations}. By invariance, for $(a, b, c)$ a cyclic permutation of $(1, 2, 3)$, $B(z_{a}, z_{a}) = B(z_{a}, [z_{b}, z_{c}]) = B([z_{a}, z_{b}], z_{c}) = B(z_{c}, z_{c})$ and $B(z_{a}, z_{b}) = B(z_{a}, [z_{c}, z_{a}]) = 0$, so $B(z_{a}, z_{b}) = -\beta'\delta_{ab}$ for some $\beta' > 0$. The triple relations \eqref{eq:sl2triple} for $(z_{1} + iz_{2}, -z_{1} + iz_{2}, 2iz_{3})$ are direct computations from \eqref{eq:xirelations}, and $\tau$ acts on them as stated because $\tau z_{a} = z_{a}$. Since $B(2iz_{3}, 2iz_{3}) = 4\beta'$, \eqref{eq:dynkinindex} gives $\beta' = j\hv = \beta$.

For \ref{sl2b}, in the coordinates $x = \sum_{a=1}^{3}x_{a}z_{a}$, $y = \sum_{a=1}^{3}y_{a}z_{a}$, the bracket is the complex bilinear cross product of the coefficient vectors and $H$ is $\beta$ times the standard Hermitian product, so \eqref{eq:sl2norm} is $\beta$ times the Lagrange identity $|x \times y|^{2} = |x|^{2}|y|^{2} - |x \cdot \bar{y}|^{2}$.
\end{proof}

\begin{lemma}\label{lem:lower}
\leavevmode\noindent
\begin{enumerate}[label=(\arabic*), ref=(\arabic*)]
\item\label{lower1} Every subalgebra of $\g$ isomorphic to $\sll(2, \com)$ is conjugate under the adjoint group to a $\tau$-stable subalgebra.
\item\label{lower2} Let $0 \neq e \in \g_{\theta}$ with $\norm{e}^{2} = 2\hv$, and put $f := -\tau e$ and $h := \theta^{\vee}$. Then $(e, f, h)$ is an $\sll(2, \com)$-triple spanning the $\tau$-stable root subalgebra $\subs_{\theta}$ of \eqref{eq:rootsubalgebra}, which has Dynkin index $1$, $H(e, f) = 0$, and $\norm{[e, f]}^{2} = \norm{e}^{2}\norm{f}^{2}/\hv$.
\item\label{lower3} Every $\tau$-stable subalgebra of $\g$ isomorphic to $\sll(2, \com)$ and having Dynkin index $1$ is $\Ad(G_{0})$-conjugate to $\subs_{\theta}$.
\end{enumerate}
\end{lemma}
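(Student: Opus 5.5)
For \ref{lower1}, I would reduce to conjugating an $\sll(2,\com)$-triple $(e,f,h)$ so that its span becomes $\tau$-stable. The standard route is Mostow's theorem: every semisimple subalgebra $\subs$ has a compact real form $\kl_{0}$, and some conjugate of $\kl_{0}$ by the adjoint group lies in $\gz$. Then $\Ad(g)\subs = \Ad(g)\kl_{0} \oplus i\Ad(g)\kl_{0}$ is $\tau$-stable.

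For a more self-contained argument, I would minimize $\norm{\Ad(g)h}^{2}+\norm{\Ad(g)e}^{2}+\norm{\Ad(g)f}^{2}$ over the adjoint group, in the spirit of Kempf--Ness or Richardson--Slodowy. The orbit of the triple is closed, since its stabilizer is reductive. At a minimum the moment-map condition $[h,\tau h]+[e,\tau e]+[f,\tau f]=0$ holds, and one would then deduce $\tau h=-h$ and $\tau e=-f$. I expect this deduction to be the main obstacle. I would simply cite Mostow (or Kostant's result that the triple can be taken compatible with the Cartan involution) rather than grind through it.

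For \ref{lower2}, the key step is to compute $[e,\tau e]$. By \eqref{eq:tauroot}, $f=-\tau e\in\g_{-\theta}$. By \ref{R2}, $[e,f]=B(e,f)t_{\theta}$, and
\begin{align*}
B(e,f)=-B(e,\tau e)=H(e,e)=2\hv.
\end{align*}
Since $t_{\theta}=\theta^{\vee}/(2\hv)$, this gives $[e,f]=\theta^{\vee}=h$. The relations $[h,e]=\theta(\theta^{\vee})e=2e$ and $[h,f]=-2f$ follow because $\theta(\theta^{\vee})=2$. The span is $\subs_{\theta}$, which is $\tau$-stable and has Dynkin index $1$ by Lemma \ref{lem:Bh}. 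The relation $H(e,f)=-B(e,\tau f)=B(e,e)=0$ holds by \ref{R2}, since $2\theta\neq0$. Finally, $\norm{f}=\norm{e}$, and
\begin{align*}
\norm{h}^{2}=-B(\theta^{\vee},\tau\theta^{\vee})=B(\theta^{\vee},\theta^{\vee})=4\hv,
\end{align*}
using $\theta^{\vee}\in i\mathfrak{t}_{0}$ so that $\tau\theta^{\vee}=-\theta^{\vee}$. Hence $\norm{[e,f]}^{2}=4\hv=(2\hv)^{2}/\hv$. Alternatively, this follows from Lemma \ref{lem:sl2str}\ref{sl2b} with $\beta=\hv$.

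For \ref{lower3}, let $\subs$ be $\tau$-stable with $j(\subs)=1$. By Lemma \ref{lem:sl2str}\ref{sl2a}, $\subs_{0}\subset\gz$ has a triple with $h=2iz_{3}$, so $-ih\in\gz$. I would conjugate by $G_{0}$ to put $-ih\in\mathfrak{t}_{0}$, so that $h\in\h$. I would then use a Weyl element in the normalizer of $\mathfrak{t}_{0}$ in $G_{0}$ to make $h$ dominant. By Lemma \ref{lem:Bh}, $j=1$ forces equality in \eqref{eq:CSh}, so $h=\theta^{\vee}$. It follows that $e\in\g_{\theta}$ and $f\in\g_{-\theta}$, hence $\subs=\subs_{\theta}$. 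All conjugations used lie in $G_{0}$, so $\subs$ is $\Ad(G_{0})$-conjugate to $\subs_{\theta}$.
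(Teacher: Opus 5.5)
Your proposal is correct and follows the paper's proof in all three parts: in (2) you use the same computation $B(e,f)=\norm{e}^{2}=2\hv$ together with \ref{R2} and $t_{\theta}=\theta^{\vee}/(2\hv)$, and in (3) the same $G_{0}$-conjugation of $h=2iz_{3}$ into $i\mathfrak{t}_{0}$, then to a dominant element, followed by the equality case of \eqref{eq:CSh}. For (1), where you cite Mostow, the paper instead gives the short argument behind that citation: the analytic subgroup of the adjoint group with Lie algebra a compact real form of $\subs$ is compact, being an image of $SU(2)$, so it lies in a conjugate of the maximal compact subgroup $G_{0}$.
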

\begin{proof}
\ref{lower1} Let $\subs \simeq \sll(2, \com)$ and let $\subs_{c}$ be a compact real form of $\subs$. The analytic subgroup $S_{c}$ of the adjoint group $G$ of $\g$ with Lie algebra $\subs_{c}$ is compact, being the image of $SU(2)$. The analytic subgroup $G_{0}$ with Lie algebra $\gz$ is a maximal compact subgroup of $G$, every compact subgroup of $G$ is contained in a maximal compact subgroup, and the maximal compact subgroups of $G$ are conjugate \cite[Chapter VI]{Helgason}, so $gS_{c}g^{-1} \subset G_{0}$ for some $g \in G$. Then $\Ad(g)\subs_{c} \subset \gz$, and $\Ad(g)\subs = \Ad(g)\subs_{c} \oplus i\Ad(g)\subs_{c}$ is $\tau$-stable.

\ref{lower2} By \eqref{eq:tauroot}, $f \in \g_{-\theta}$, and $B(e, f) = -B(e, \tau e) = \norm{e}^{2} = 2\hv$, so by \ref{R2} and $t_{\theta} = \theta^{\vee}/(2\hv)$, $[e, f] = \theta^{\vee} = h$. Since $\theta(\theta^{\vee}) = 2$, $(e, f, h)$ is an $\sll(2, \com)$-triple, and $\tau f = -\tau\tau e = -e$ and $\tau h = [\tau e, \tau f] = [-f, -e] = -h$, so its span is $\tau$-stable; by \eqref{eq:dynkinindex} its Dynkin index is $(\theta^{\vee}, \theta^{\vee})/2 = 1$. By \ref{R2}, $H(e, f) = -B(e, \tau f) = B(e, e) = 0$, and \eqref{eq:sl2norm} with $\beta = \hv$ gives the last claim.

\ref{lower3} Let $\subs$ be $\tau$-stable of Dynkin index $1$, let $z_{1}, z_{2}, z_{3}$ be as in Lemma \ref{lem:sl2str}\ref{sl2a}, and let $(e, f, h) = (z_{1} + iz_{2}, -z_{1} + iz_{2}, 2iz_{3})$. Since $z_{3} \in \gz$ is $\Ad(G_{0})$-conjugate into $\mathfrak{t}_{0}$, it can be assumed that $h \in i\mathfrak{t}_{0} \subset \h$. The roots take integer values on $h$, because these are eigenvalues of $\ad(h)$, so $h$ lies in the real span of the coroots, and since the Weyl group is realized by the normalizer of $\mathfrak{t}_{0}$ in $G_{0}$, $h$ can be assumed dominant. As $j(\subs) = 1$, $(h, h) = 2$ by \eqref{eq:dynkinindex}, and the equality case in the proof of Lemma \ref{lem:Bh} gives $h = \theta^{\vee}$, $e \in \g_{\theta}$, and $f \in \g_{-\theta}$, so $\subs = \subs_{\theta}$.
\end{proof}

\section{The functions $F$, $\sect$, $W$ and the comass of the Cartan form}\label{sec:reform}

This section describes the relations between the function $F$ to be maximized and the auxiliary functions $\sect$ and $W$ and the comass of the Cartan $3$-form.

Let $\stiefel(\g)$ be the set of pairs $(x, y) \in \g \times \g$ that are linearly independent over $\com$, and let $\grass(2, \g)$ be the Grassmannian of complex $2$-planes in $\g$. Define functions
\begin{align}\label{eq:FKWdef}
\begin{aligned}
&F(x,y) := \frac{\norm{[x, y]}^2}{\norm{x}^{2}\norm{y}^{2}}, && x, y \in \g\setminus\{0\},\\
&\sect(x,y) := \frac{\norm{[x, y]}^2}{\norm{x}^{2}\norm{y}^{2} - |H(x, y)|^{2}}, && (x, y) \in \stiefel(\g),\\
& W(x,y,z):=\frac{|\omega(x,y,z)|^2}{\norm{x}^{2}\,\norm{y}^{2}\,\norm{z}^{2}}, && x, y, z \in \g\setminus\{0\}.
\end{aligned}
\end{align}
The denominator of $\sect$ is positive on $\stiefel(\g)$ by the Cauchy--Schwarz inequality. The function $\sect$ is called the \emph{sectional nonassociativity}. If $u = ax + by$ and $v = cx + dy$ with $ad - bc \neq 0$, then the numerator and the denominator of $\sect$ are both multiplied by $|ad - bc|^{2}$, so
\begin{align}\label{eq:sectinvariance}
\sect(ax + by, cx + dy) = \sect(x, y),
\end{align}
and $\sect$ descends to a function on $\grass(2, \g)$, also denoted $\sect$. The functions $F$ and $W$ are invariant under rescaling each argument by a nonzero complex number, so they descend to $\proj(\g)^{2}$ and $\proj(\g)^{3}$. The \emph{comass} of the Cartan form with respect to $H$ is
\begin{align}\label{eq:comass}
\comass{\om} := \sup\big\{|\om(x, y, z)|: \norm{x} = \norm{y} = \norm{z} = 1\big\} = \sup_{x, y, z \in \g \setminus \{0\}} W(x, y, z)^{1/2}.
\end{align}

\begin{remark}\label{rem:comass}
The quantity \eqref{eq:comass} is a comass in the usual sense \cite{Harvey-Lawson}: for a real alternating form on a Euclidean space, the supremum over unit vectors equals the maximum over orthonormal frames, by Hadamard's inequality. Regard $\g$ as the Euclidean space $(\g^{\rea}, \re H)$, on which $\re\om$ is a real alternating trilinear form. Since $\re\om \leq |\om|$, and replacing $x$ by $e^{i\phi}x$ makes $\om(x, y, z)$ real and nonnegative,
\begin{align}\label{eq:comassreal}
\comass{\om} = \sup\big\{\re\om(x, y, z): \norm{x} = \norm{y} = \norm{z} = 1\big\} = \comass{\re\om},
\end{align}
the comass of $\re\om$ on $(\g^{\rea}, \re H)$. On the other hand, $\om$ restricted to $\gz$ is a real alternating form on the Euclidean space $(\gz, -B)$, whose comass is the quantity studied by Tasaki \cite{Tasaki-calibration} (Remark \ref{rem:calibration}). By \eqref{eq:comassreal} and Corollary \ref{thm:compact}\ref{compact1} below, Theorem \ref{thm:A} asserts that these two comasses are equal, both being $(\hv)^{-1/2}$.

In the language of tensors, \eqref{eq:comass} is the injective norm of $\om$ regarded as a complex $3$-tensor on the Hermitian space $(\g, H)$, which in the literature on tensor computations and optimization is called the \emph{spectral norm}; for alternating tensors it coincides with the comass, by the Hadamard argument above. For tensors of order at least $3$, the spectral norm of a real tensor depends in general on whether the tensor is regarded as real or as complex \cite{Friedland-Lim-nuclear}. The restriction of $\om$ to $\gz$ is a real $3$-tensor on $(\gz, -B)$ whose complexification is $\om$ on $(\g, H)$, and by Theorem \ref{thm:A} and Corollary \ref{thm:compact}\ref{compact1} its complex and real spectral norms coincide. This is a property of the Cartan form, not a formal consequence of the definitions.

The relevance of comass to inequalities of DDVV type was observed by Choi and Lu \cite{Choi-Lu}, who used calibrations to prove cases of the DDVV inequality; the present paper goes in the other direction, using the extremal problem for the commutator to compute the comass of the Cartan $3$-form on the Hermitian space $(\g, H)$.
\end{remark}

\begin{lemma}\label{fkwmaxlemma}
\leavevmode\noindent
\begin{enumerate}[label=(\arabic*), ref=(\arabic*)]
\item\label{fkw1} $F(x, y) \leq \sect(x, y)$ for $(x, y) \in \stiefel(\g)$, with equality if and only if $H(x, y) = 0$ or $[x, y] = 0$.
\item\label{fkw2} $W(x, y, z) \leq F(x, y)$, with equality if and only if $[x, y] = 0$ or $z \in \com\,\tau[x, y]$. In particular $F(x,y)=\max_{z \in \g \setminus \{0\}}W(x,y,z)$.
\item\label{fkw3} $F$, $\sect$, and $W$ attain their suprema, and
\begin{align}\label{eq:fkwcomass}
\max_{\sigma \in \grass(2, \g)} \sect(\sigma) = \max_{x, y \in \g \setminus \{0\}} F(x, y) = \max_{x, y, z \in \g \setminus \{0\}} W(x, y, z) =\comass{\om}^2 > 0.
\end{align}
\end{enumerate}
\end{lemma}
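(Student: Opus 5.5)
Each of the three parts reduces to the Cauchy--Schwarz inequality for $H$ together with compactness of projective spaces, so I will treat them in turn.

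For \ref{fkw1}, on $\stiefel(\g)$ the denominator of $\sect$ is positive and $\norm{x}^{2}\norm{y}^{2} - |H(x, y)|^{2} \leq \norm{x}^{2}\norm{y}^{2}$. Hence $F(x, y) \leq \sect(x, y)$, since both have the same numerator $\norm{[x, y]}^{2} \geq 0$. Equality holds exactly when the numerator vanishes, i.e.\ $[x, y] = 0$, or when the denominators agree, i.e.\ $H(x, y) = 0$.

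For \ref{fkw2}, by \eqref{eq:omegaformulas}, $\om(x, y, z) = -H([x, y], \tau z)$. Cauchy--Schwarz for $H$ and $\norm{\tau z} = \norm{z}$ give
\begin{align*}
|\om(x, y, z)|^{2} \leq \norm{[x, y]}^{2}\norm{z}^{2}.
\end{align*}
Dividing by $\norm{x}^{2}\norm{y}^{2}\norm{z}^{2}$ yields $W \leq F$. Equality in Cauchy--Schwarz holds iff $[x, y] = 0$ or $\tau z \in \com[x, y]$. The latter is equivalent to $z \in \com\,\tau[x, y]$, because $\tau$ is conjugate-linear and involutive, so it maps complex lines to complex lines. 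If $[x, y] \neq 0$, choosing $z = \tau[x, y]$ attains $F(x, y)$. If $[x, y] = 0$, both sides vanish for every $z$. This gives $F(x, y) = \max_{z} W(x, y, z)$.

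For \ref{fkw3}, $F$ and $W$ are continuous and scale-invariant, so they descend to continuous functions on the compact spaces $\proj(\g)^{2}$ and $\proj(\g)^{3}$, where they attain maxima. By \ref{fkw2}, $\max W = \max F$. By \eqref{eq:comass}, $\max W = \comass{\om}^{2}$.

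For $\sect$, the inequality $\sup\sect \geq \max F$ follows from \ref{fkw1} on $\stiefel(\g)$; note that $F = 0$ on dependent pairs, so these do not affect $\max F$. For the reverse inequality, given $(x, y) \in \stiefel(\g)$, I will Gram--Schmidt: set $y' = y - \frac{H(y, x)}{\norm{x}^{2}}x$. Then $H(x, y') = 0$ and $[x, y'] = [x, y]$, and \eqref{eq:sectinvariance} gives $\sect(x, y) = \sect(x, y')$. By \ref{fkw1} this equals $F(x, y')$, so $\sect(x, y) \leq \max F$. Thus $\sup\sect = \max F$, and the supremum is attained at $\spn\{x, y'\}$ with $(x, y')$ a maximizer of $F$. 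Such a maximizer can be taken orthogonal: since $\max F > 0$ (shown next), a maximizer has $[x, y] \neq 0$, hence $x, y$ are independent, and the orthogonalization above gives $F(x, y') = \sect(x, y) \geq F(x, y) = \max F$.

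Positivity: $\g$ is nonabelian, so some $[x, y] \neq 0$ and $F(x, y) > 0$. Lemma \ref{lem:lower}\ref{lower2} even gives the explicit value $1/\hv$.

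The only delicate point is bookkeeping in \ref{fkw3}, namely that the supremum of $\sect$ over the noncompact $\grass$ description is attained. I expect the orthogonalization reduction above to handle this cleanly.
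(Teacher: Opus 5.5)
Your proposal is correct and matches the paper's proof essentially step for step: (1) comes directly from the definitions, (2) comes from Cauchy--Schwarz applied to $\om(x,y,z) = -H([x,y],\tau z)$ together with its equality case, and (3) comes from compactness plus the fact that every plane has an $H$-orthogonal basis, on which $F = \sect$. The point you flag as delicate is not actually delicate, since $\grass(2,\g)$ is itself compact and $\sect$ descends to it, so attainment is immediate; your orthogonalization argument establishes it as well.
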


\begin{proof}
Claim \ref{fkw1} is clear from \eqref{eq:FKWdef}, and \ref{fkw2} is the Cauchy--Schwarz inequality, by \eqref{eq:omegaformulas},
\begin{align}\label{eq:Fomega1}
|\omega(x,y,z)|^{2} = |H([x, y], \tau z)|^{2} \leq \norm{[x, y]}^{2}\norm{z}^{2},
\end{align}
with its equality case, divided by $\norm{x}^{2}\norm{y}^{2}\norm{z}^{2}$. For \ref{fkw3}, the three functions descend to compact manifolds, so attain their maxima; $\max\sect = \max F$ because every plane has an $H$-orthogonal basis, on which $F = \sect$; $\max F = \max W = \comass{\om}^{2}$ by \eqref{eq:Fomega1} and \eqref{eq:comass}, which gives \eqref{eq:fkwcomass}; and the maximum is positive because $\g$ is not abelian.
\end{proof}

By Lemma \ref{fkwmaxlemma}, the inequality \eqref{eq:main} of Theorem \ref{thm:A} is equivalent to $\comass{\om}^{2} \leq 1/\hv$. Lemmas \ref{lem:sl2str} and \ref{lem:lower} identify the natural candidates for maximum points. By \eqref{eq:sl2norm}, $\sect$ is constant on $\grass(2, \subs)$ for a $\tau$-stable subalgebra $\subs \simeq \sll(2, \com)$ of Dynkin index $j$:
\begin{align}\label{eq:sectconstant}
\sect(\sigma) = \frac{1}{j\hv} \qquad \text{for every } \sigma \in \grass(2, \subs) \subset \grass(2, \g).
\end{align}
By Lemmas \ref{lem:Bh} and \ref{lem:lower}\ref{lower2}, the largest value of \eqref{eq:sectconstant} is $1/\hv$, attained for $j = 1$, so that
\begin{align}\label{eq:lowerbound}
\comass{\om}^{2} \geq \frac{1}{\hv}.
\end{align}
The content of Theorem \ref{thm:A} is the reverse inequality, together with the statement that every maximum point is of this form.

On $\gz$ the optimal constant can be computed from results of Vinberg, and its value is the calibration property of Tasaki's form. The rest of this section records these results in order to make the comparison with Theorem \ref{thm:A} precise.

Let $G_{0}$, $\mathfrak{t}_{0}$, $\h$, $\Phi$, and $s_{\al}$ be as in Section \ref{sec:prelim}; $G_{0}$ also preserves $F$.

By a theorem of Vinberg \cite[Theorem~2]{Vinberg-invariantnorms}, if $N$ is an $\Ad(G_{0})$-invariant norm on $\gz$, then for every $x \in \gz$ the operator norm $\sup\{N([x, y])/N(y): y \in \gz \setminus \{0\}\}$ of $\ad(x)$ is the spectral radius of $\ad(x)$, independently of $N$; Vinberg also showed that the invariant norms on $\gz$ correspond to the Weyl group invariant norms on $\mathfrak{t}_{0}$ \cite[Theorem~1]{Vinberg-invariantnorms} (see also \cite{Berestovksii-Nikonorov}). Since every element of $\gz$ is conjugate into $\mathfrak{t}_{0}$, and the spectral radius of $\ad(t)$, $t \in \mathfrak{t}_{0}$, is $\max_{\al \in \Phi}|\al(t)|$, this gives the following.
\begin{corollary}\label{cor:vinberg}
For every $\Ad(G_{0})$-invariant norm $N$ on $\gz$,
\begin{align}\label{eq:vinbergbound}
\sup_{x, y \in \gz\setminus\{0\}}\frac{N([x, y])}{N(x)N(y)} = \max_{\al \in \Phi}\,\max\big\{|\al(t)|: t \in \mathfrak{t}_{0},\ N(t) = 1\big\}.
\end{align}
\end{corollary}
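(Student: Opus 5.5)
The plan is to derive Corollary \ref{cor:vinberg} directly from Vinberg's theorem, reducing the two-variable supremum to an operator-norm supremum and then to a root computation on $\mathfrak{t}_{0}$.

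First, write the left side of \eqref{eq:vinbergbound} as an iterated supremum:
\begin{align*}
\sup_{x \neq 0}\frac{1}{N(x)}\sup_{y \neq 0}\frac{N([x,y])}{N(y)} = \sup_{x \neq 0}\frac{\|\ad(x)\|_{N}}{N(x)},
\end{align*}
where $\|\ad(x)\|_{N}$ denotes the operator norm of $\ad(x)$ on $(\gz, N)$. By Vinberg's theorem \cite[Theorem~2]{Vinberg-invariantnorms}, $\|\ad(x)\|_{N}$ equals the spectral radius $r(\ad(x))$ for every $x \in \gz$. By homogeneity, the supremum may be restricted to $N(x) = 1$.

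Second, I would use conjugation. For $g \in G_{0}$, $N(\Ad(g)x) = N(x)$ by invariance, and $r(\ad(\Ad(g)x)) = r(\ad(x))$ because $\ad(\Ad(g)x) = \Ad(g)\ad(x)\Ad(g)^{-1}$. Since every $x \in \gz$ is $\Ad(G_{0})$-conjugate into $\mathfrak{t}_{0}$, the supremum equals $\sup\{r(\ad(t)) : t \in \mathfrak{t}_{0},\ N(t)=1\}$.

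Third, for $t \in \mathfrak{t}_{0}$, $\ad(t)$ is diagonalizable on $\g = \h \oplus \bigoplus_{\al}\g_{\al}$ with eigenvalues $0$ and $\al(t)$, and these eigenvalues are unchanged by restricting to the real form. Hence $r(\ad(t)) = \max_{\al \in \Phi}|\al(t)|$. Interchanging the two suprema then gives the right side of \eqref{eq:vinbergbound}. The set $\{t \in \mathfrak{t}_{0}: N(t)=1\}$ is compact, since $N$ is a norm on a finite-dimensional space, so the suprema are attained and "$\max$" is justified. On the left, the supremum is attained as well, because $N([x,y])/(N(x)N(y))$ is continuous on the compact product of unit spheres.

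There is no real obstacle here. The only point needing care is that Vinberg's theorem is applied to the real Lie algebra $\gz$ acting on itself. The spectral radius of $\ad(x)|_{\gz}$ coincides with that of its complexification $\ad(x)$ on $\g$, so the root description of the eigenvalues applies.
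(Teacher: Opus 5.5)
Your proposal is correct and is essentially the paper's argument: the paper also obtains the corollary directly from Vinberg's theorem (the operator norm of $\ad(x)$ equals its spectral radius), from the $\Ad(G_{0})$-conjugacy of every element of $\gz$ into $\mathfrak{t}_{0}$, and from the identification of the spectral radius of $\ad(t)$ with $\max_{\al \in \Phi}|\al(t)|$. Your extra remarks, that the suprema are attained and that the spectral radius can be computed after complexifying, are correct details the paper leaves implicit.
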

For $N = \norm{\dum}$, \eqref{eq:salpha} and the Cauchy--Schwarz inequality give $\max\{|\al(t)| : t \in \mathfrak{t}_{0},\ \norm{t} = 1\} = \norm{s_{\al}}$, so the right side of \eqref{eq:vinbergbound} is $\max_{\al \in \Phi}\norm{s_{\al}} = (\hv)^{-1/2}$, attained at $t = s_{\theta}/\norm{s_{\theta}}$. This is the restriction of \eqref{eq:main} to $\gz$. It is also Tasaki's inequality \cite[(8) in the proof of Theorem~5]{Tasaki-calibration}, and, in a different normalization and for $x \in \gz \cup i\gz$, $y \in \g$, the computation of Wang's constants in \cite{Wang-nilpotent}, \cite[Proposition 4.1]{Adeboye-Wang-Wei}, which enter Wang's quantitative form of the Kazhdan--Margulis theorem for the complex group regarded as a real semisimple Lie group with Cartan decomposition $\g = \gz \oplus i\gz$. These constants are suprema of the operator norm of $\ad(x)$ over unit vectors $x$ in $\gz$ or in $i\gz$, where $\ad(x)$ is normal; Theorem \ref{thm:A} removes this restriction on $x$. The following corollary records it together with the equality case.

\begin{corollary}\label{thm:compact}
\leavevmode\noindent
\begin{enumerate}[label=(\arabic*), ref=(\arabic*)]
\item\label{compact1} For $x \in \gz\setminus\{0\}$, $\sup_{y \in \g \setminus \{0\}}F(x, y)$ is attained at some $y \in \gz$, and $F(x, y) \leq 1/\hv$ for all $x \in \gz\setminus\{0\}$ and $y \in \g \setminus \{0\}$, with equality for some $x, y \in \gz$.
\item\label{compact2} A pair $(x, y) \in (\gz \setminus \{0\})^{2}$ satisfies $F(x, y) = 1/\hv$ if and only if it is $\Ad(G_{0})$-conjugate to a pair $(c s_{\theta}, y')$ with $c \in \rea \setminus \{0\}$ and $0 \neq y' \in \gz \cap (\g_{\theta} \oplus \g_{-\theta})$.
\end{enumerate}
\end{corollary}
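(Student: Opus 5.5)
The plan is to reduce everything to the spectral analysis of a single normal operator, using that for $x\in\gz$ the operator $\ad(x)$ is skew-Hermitian with respect to $H$ by \eqref{basic4}, because $\tau x = x$. Then $\qad(x) = -\ad(x)^{2}$ is $H$-Hermitian and nonnegative. Its largest eigenvalue is $\rho(\ad x)^{2}$, where $\rho$ denotes the spectral radius. By \eqref{basic5}, $\sup_{y\in\g\setminus\{0\}}F(x,y) = \rho(\ad x)^{2}/\norm{x}^{2}$, with the supremum attained exactly when $y$ lies in the top eigenspace of $\qad(x)$.

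\textbf{Part (1).} Since $\ad(x)$ preserves $\gz$, $\qad(x)$ commutes with $\tau$, so its top eigenspace is $\tau$-stable. Every $\tau$-stable complex subspace is the complexification of its intersection with $\gz$, so that eigenspace contains a nonzero $y\in\gz$; this gives the first claim. For the bound, I would conjugate $x$ by $\Ad(G_{0})$ into $\mathfrak{t}_{0}$, which preserves $F$. For $t\in\mathfrak{t}_{0}$ the spectrum of $\ad(t)$ is $\{0\}\cup\{\al(t)\}$, and \eqref{eq:salpha} with Cauchy--Schwarz gives
\begin{align*}
|\al(t)| = |B(s_{\al},t)| \le \norm{s_{\al}}\,\norm{t} \le (\hv)^{-1/2}\norm{t}.
\end{align*}
Hence $F(x,y)\le 1/\hv$. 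Equality for $x,y\in\gz$ is obtained by taking $x = s_{\theta}$ and $y$ a nonzero element of $\gz\cap(\g_{\theta}\oplus\g_{-\theta})$, for instance $e-\tau e$ with $e\in\g_{\theta}$ (using \eqref{eq:tauroot}). This is the computation already recorded after Corollary \ref{cor:vinberg}.

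\textbf{Part (2).} This is the equality analysis, which I expect to be the only step requiring care.

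First I would pin down $x$. Suppose $x,y\in\gz$ with $F(x,y) = 1/\hv$, and conjugate so that $x = t\in\mathfrak{t}_{0}$. Then both inequalities in the display above are equalities for some root $\al$. The second equality forces $\al$ to be long. The first forces $t = c\,s_{\al}$ with $c\in\rea\setminus\{0\}$, by the equality case of Cauchy--Schwarz for the real inner product $-B$ on $\mathfrak{t}_{0}$. The Weyl group acts transitively on long roots and is realized by the normalizer of $\mathfrak{t}_{0}$ in $G_{0}$, so we may move $\al$ to $\theta$, giving $x = c\,s_{\theta}$. (The sign of $c$ absorbs the possibility of $-\theta$.)

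Next I would pin down $y$. By the first paragraph, $y$ must lie in the top eigenspace of $\qad(s_{\theta}) = -\ad(s_{\theta})^{2}$. This operator acts by $0$ on $\h$ and by $|\beta(s_{\theta})|^{2}$ on $\g_{\beta}$, where $\beta(s_{\theta}) = iB(s_{\beta},s_{\theta})$. The value $|B(s_{\beta},s_{\theta})|$ equals $\norm{s_{\theta}}^{2}$ only if $\norm{s_{\beta}} = \norm{s_{\theta}}$ and $s_{\beta} = \pm s_{\theta}$, again by Cauchy--Schwarz. That is, $\beta = \pm\theta$, since $\beta\mapsto s_{\beta}$ is injective. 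So the top eigenspace is $\g_{\theta}\oplus\g_{-\theta}$, and $y\in\gz\cap(\g_{\theta}\oplus\g_{-\theta})$.

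The converse follows by reading the same computation backwards: for such a pair, $\norm{[x,y]}^{2} = c^{2}\norm{s_{\theta}}^{4}\norm{y}^{2}$ and $\norm{s_{\theta}}^{2} = 1/\hv$, so $F(x,y) = 1/\hv$.
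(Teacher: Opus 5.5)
Your argument is correct and is essentially the paper's proof. Part (1) uses the normality of $\ad(x)$ for $x\in\gz$ and the $\tau$-stability of the top eigenspace of $\qad(x)$. Part (2) conjugates $x$ into $\mathfrak{t}_{0}$ and uses the equality cases of Cauchy--Schwarz with $s_{\al}$, first to fix $x$ and then to identify the top eigenspace of $\qad(s_{\theta})$. Your direct spectral computation just replaces the paper's citation of Corollary \ref{cor:vinberg}, which for $N=\norm{\dum}$ amounts to the same computation.

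One small slip: $\tau(e-\tau e)=-(e-\tau e)$, so $e-\tau e$ lies in $i\gz$, not $\gz$. The example you want is $e+\tau e$ (or $i(e-\tau e)$). This does not affect the proof, since your general observation that the $\tau$-stable space $\g_{\theta}\oplus\g_{-\theta}$ meets $\gz$ already suffices.
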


\begin{proof}
\ref{compact1} For $x \in \gz$, $\ad(x)$ is skew-Hermitian by \eqref{basic4} and commutes with $\tau$, so its operator norm on $\g$ is its spectral radius and is attained on a $\tau$-stable eigenspace of $\qad(x) = -\ad(x)^{2}$, which meets $\gz$. Hence $\sup_{y \in \g\setminus\{0\}}F(x, y) = \sup_{y \in \gz\setminus\{0\}}F(x, y)$ is attained in $\gz$, and \ref{compact1} follows from Corollary \ref{cor:vinberg} with $N = \norm{\dum}$.

\ref{compact2} If $F(x, y) = 1/\hv$, conjugate $x$ into $\mathfrak{t}_{0}$. By \eqref{eq:vinbergbound} and \eqref{eq:salpha}, $|\al(x)| = |B(s_{\al}, x)| = \norm{x}/(\hv)^{1/2}$ for some root $\al$, so equality holds in the Cauchy--Schwarz inequality $|B(s_{\al}, x)| \leq \norm{s_{\al}}\norm{x} \leq \norm{x}/(\hv)^{1/2}$; hence $\al$ is long and $x \in \rea s_{\al}$, and after a Weyl group element, $x = cs_{\theta}$. For a root $\mu$, \eqref{eq:salpha} gives $|\mu(s_{\theta})|^{2} = B(s_{\mu}, s_{\theta})^{2} \leq \norm{s_{\mu}}^{2}\norm{s_{\theta}}^{2} \leq \norm{s_{\theta}}^{2}/\hv$, with equality if and only if $\mu$ is long and $s_{\mu} \in \rea s_{\theta}$, that is, $\mu = \pm\theta$; so the top eigenspace of $\qad(cs_{\theta})$ is $\g_{\theta} \oplus \g_{-\theta}$, and $y$ lies in its intersection with $\gz$. The converse is clear, which proves \ref{compact2}.
\end{proof}

Since on $\gz$ the sectional curvature of the bi-invariant metric $-B$ is $\tfrac{1}{4}F$ on orthogonal pairs, for pairs in $\gz$ Corollary \ref{thm:compact} amounts to the determination in \cite[Theorems~2 and~4]{Meshcheryakov} of the maximal sectional curvature of $(G_{0}, -B)$, written there as $1/(2i(\mathrm{ad}))$ with $i(\mathrm{ad}) = 2\hv$, and of the planes realizing it (Remark \ref{rem:meshcheryakov}).

For $x \in \g$, the operator $\ad(x)$ need not be normal, and its operator norm need not equal its spectral radius: for nilpotent $x \neq 0$ the spectral radius is $0$ while the operator norm is positive. Neither Vinberg's theorem nor Corollary \ref{cor:vinberg} applies to such $x$. Theorem \ref{thm:A} shows that nonetheless, for the Hermitian norm $H$, the supremum over $\g$ equals the supremum over $\gz$.

\begin{remark}\label{rem:calibration}
A closed $p$-form is a calibration if its comass, its maximum on unit simple $p$-vectors, is at most $1$ \cite{Harvey-Lawson}. Tasaki \cite[Theorem~5]{Tasaki-calibration} proved that on a compact simple Lie group with bi-invariant metric $-B$, the bi-invariant $3$-form 
\begin{align}\label{eq:tasakiform}
\phi(x, y, z) = (\hv)^{1/2}\,(-B)([x, y], z) = -(\hv)^{1/2}\,\om(x, y, z), \qquad x, y, z \in \gz,
\end{align}
(normalized there by the length $(\hv)^{-1/2}$ of the highest root for the metric $-B$; see \eqref{eq:salpha}) is a calibration, whose calibrated $3$-planes are exactly the $\Ad(G_{0})$-conjugates of the long-root subalgebra $\su(2) \subset \gz$, suitably oriented. Ohnita and Tasaki \cite[Theorem~2 and Lemma~5]{Ohnita-Tasaki} showed that every submanifold calibrated by $\phi$ is totally geodesic, hence an open piece of a congruent copy of the long-root subgroup, so that this subgroup is, up to congruence, the unique volume minimizer in its real homology class. Since calibrated planes exist, the calibration property says that the comass of $\om$ with respect to the Euclidean form $-B$ on $\gz$ is $(\hv)^{-1/2}$, in agreement with Corollary \ref{thm:compact}\ref{compact1}. See also \cite{Hitchin-calibrations}, where the calibration \eqref{eq:tasakiform} and its relation with the principal three-dimensional subgroup are discussed.
\end{remark}

\section{The Nahm algebra and the proof of Theorems \ref{thm:A} and \ref{thm:C}}\label{sec:nahm}
\subsection{Preliminaries on the Nahm algebra}\label{sec:nahmprelim}
Write elements of $\g^{3} = \g \oplus \g \oplus \g$ as $a = (a_{1}, a_{2}, a_{3})$, and for $i \in \{1, 2, 3\}$ let $\iota_{i}:\g \to \g^{3}$ be the embedding of $\g$ as the $i$th factor, so that $a = \sum_{i=1}^{3}\iota_{i}(a_{i})$. 

\begin{definition}\label{def:nahm}
The \emph{Nahm algebra} $(\g^{3}, \tmlt)$ of a Lie algebra $\g$ is $\g^{3}$ with the multiplication $\tmlt$ of \eqref{nahmproduct}.
\end{definition}

Nahm algebras were introduced by Kinyon and Sagle \cite{Kinyon-Sagle} with the multiplication normalized to be one half of \eqref{nahmproduct}; that normalization makes the diagonal embedding of a commutative algebra into its triple algebra a homomorphism, a property not used here. The multiplication \eqref{nahmproduct} is the Lie algebra case of the \emph{triple algebra} of an arbitrary algebra \cite[Section~11]{Fox-trace}, \cite{Fox-cubicpoly, Fox-Tkachev}, and its normalization is the one for which the Killing form polarizes the cubic form \eqref{eq:cubic}, as Lemma \ref{lem:nahmbasic} shows.

The forms $B$ and $H$ and the conjugation $\tau$ are extended to $\g^{3}$ componentwise, and the same symbols are used for the extensions:
\begin{align}\label{eq:componentwise}
B(a,b):=\sum_{i=1}^3B(a_i,b_i), \qquad H(a,b):=\sum_{i=1}^3H(a_i,b_i), \qquad \tau a:=(\tau a_1,\tau a_2,\tau a_3).
\end{align}
The facts listed at the beginning of Section \ref{sec:prelim} for $H$, $B$, and $\tau$ on $\g$ hold verbatim for the extensions \eqref{eq:componentwise}. 
The real part $\re H$ of $H$ is a real inner product on $\g^{3}$, with norm $\nnorm{a}:=H(a,a)^{1/2}$; the norm $\nnorm{\dum}$ on $\g^{3}$ is not to be confused with either the norm $\norm{\dum}$ on $\g$ or with the modulus of a complex number, although $\nnorm{a}^{2} = \sum_{i=1}^{3}\norm{a_{i}}^{2}$. Let $J$ denote multiplication by $i$ on $\g^{3}$, so that $\re H(Ja, b) = -\re H(a, Jb)$. Recall from \eqref{eq:cubic} the cubic form $C(a) = \om(a_{1}, a_{2}, a_{3})$ on $\g^{3}$, in terms of which $W(a) = |C(a)|^{2}/(\norm{a_{1}}^{2}\norm{a_{2}}^{2}\norm{a_{3}}^{2})$.

\begin{lemma}\label{lem:nahmbasic}
Let $\lin:\g^{3} \to \eno(\g^{3})$ be the multiplication endomorphism of the Nahm algebra $(\g^{3}, \tmlt)$ defined by $\lin(a)b := a \tmlt b$. 
For $a, b, c \in \g^{3}$:
\begin{enumerate}[label=(\arabic*), ref=(\arabic*)]
\item\label{nahm1} $\tmlt$ is complex bilinear and commutative, and $B(a \tmlt b, c)$ is the full polarization of $C$:
\begin{align}\label{eq:polarization}
B(a \tmlt b, c) = C(a + b + c) - C(a+b) - C(b+c) - C(c+a) + C(a) + C(b) + C(c).
\end{align}
In particular $B(a \tmlt b, c)$ is symmetric in $a$, $b$, $c$, so $B$ is invariant: $B(a \tmlt b, c) = B(a, b \tmlt c)$.
\item\label{nahm2} $a \tmlt a = 2\big([a_{2}, a_{3}], [a_{3}, a_{1}], [a_{1}, a_{2}]\big)$, and the first and second derivatives of $C$ at $a$ are
\begin{align}\label{eq:derivativesC}
dC_{a}(b) = \tfrac{1}{2}B(a \tmlt a, b), \qquad d^{2}C_{a}(b, c) = B(a \tmlt b, c).
\end{align}
\item\label{nahm3} $\tau$ is a conjugate-linear order $2$ automorphism of $(\g^{3}, \tmlt)$ and $H(a\tmlt b, c) = H(b, \tau a \tmlt c)$; that is, the $H$-adjoint $\lin(a)^{\dagger}$ of $\lin(a)$ satisfies $\lin(a)^{\dagger} = \lin(\tau a)$.
\item\label{nahm4} $\tr \lin(a) = 0$.
\end{enumerate}
\end{lemma}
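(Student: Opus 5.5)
All four parts of Lemma \ref{lem:nahmbasic} are direct computations from the product \eqref{nahmproduct} together with invariance of $B$, the cyclic symmetry \eqref{eq:omegaformulas} of $\om$, and the adjointness relation \eqref{basic4}. I would take them in order, since \ref{nahm2} and \ref{nahm3} both reuse the trilinear identity obtained in \ref{nahm1}.

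For \ref{nahm1}, complex bilinearity of $\tmlt$ is immediate from bilinearity of the bracket. Commutativity follows because each component of \eqref{nahmproduct} is symmetric under $a \leftrightarrow b$; for instance the first component $[a_{2}, b_{3}] + [b_{2}, a_{3}]$ is visibly symmetric.

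The main step is to compute the trilinear form directly:
\begin{align*}
B(a \tmlt b, c) = \sum B([a_{\sigma 1}, b_{\sigma 2}], c_{\sigma 3}) = \sum \om(a_{\sigma 1}, b_{\sigma 2}, c_{\sigma 3}),
\end{align*}
where the sum runs over all six permutations $\sigma$ of $\{1,2,3\}$. To get this, I would write $B(a\tmlt b,c)=\sum_i B((a\tmlt b)_i,c_i)$ and check that the six resulting terms are, by alternation of $\om$, exactly the six placements of $a,b,c$ into slots $1,2,3$ of $C$. This expression is manifestly symmetric in $a,b,c$, which gives invariance.

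It remains to identify it with the polarization \eqref{eq:polarization}. Expanding $C(a+b+c)$ by trilinearity, the right side of \eqref{eq:polarization} is the inclusion–exclusion formula, which isolates exactly the mixed terms in which each of $a,b,c$ appears once. Those mixed terms are the same six-term sum above.

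For \ref{nahm2}, setting $a=b$ in \eqref{nahmproduct} and using $[a_{3},a_{2}]=-[a_{2},a_{3}]$ gives the stated formula for $a\tmlt a$. Setting $a=b=c$ in the six-term sum gives $B(a\tmlt a,a)=6C(a)$. Since $C$ is a cubic form whose full symmetric polarization is $B(a\tmlt b,c)$, its derivatives are $dC_{a}(b)=\tfrac{3}{6}B(a\tmlt a,b)$ and $d^{2}C_{a}(b,c)=\tfrac{6}{6}B(a\tmlt b,c)$. This is standard calculus for a cubic $P(a)=\tfrac16 T(a,a,a)$ with $T$ symmetric.

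For \ref{nahm3}, the map $\tau$ is conjugate linear, has order $2$, and is a Lie automorphism. Applying it componentwise to \eqref{nahmproduct} gives $\tau(a\tmlt b)=\tau a\tmlt\tau b$. For the adjoint relation I would write
\begin{align*}
H(a\tmlt b,c) = -B(a\tmlt b,\tau c) = -B(b,a\tmlt\tau c) = -B(b,\tau(\tau a\tmlt c)) = H(b,\tau a\tmlt c),
\end{align*}
using invariance from \ref{nahm1}, the automorphism property, and $\tau^{2}=\id$.

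For \ref{nahm4}, write $\lin(a)$ as a $3\times3$ block operator on $\g^{3}$. From \eqref{nahmproduct}, the $i$th output component of $a\tmlt b$ involves only $b_{j}$ with $j\neq i$; for example $(a\tmlt b)_{1}=[a_{2},b_{3}]-[a_{3},b_{2}]$ depends only on $b_{2},b_{3}$. So the diagonal blocks vanish and the trace is $0$. Alternatively, $\tr\ad(x)=0$ on a simple $\g$.

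I expect the only real obstacle to be the sign and permutation bookkeeping in \ref{nahm1}, namely matching the six terms and confirming the constant $1$ rather than $2$ in \eqref{eq:polarization}. I would check this constant by evaluating both sides on a single nonzero example such as $a=\iota_{1}(x)$, $b=\iota_{2}(y)$, $c=\iota_{3}(z)$. There $B(a\tmlt b,c)=B([x,y],z)$, and the inclusion–exclusion expression gives $C(a+b+c)=\om(x,y,z)$ with all other terms zero, so the two sides agree.
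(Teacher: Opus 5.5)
Your proof follows the paper's proof step for step: the six-term expansion of $B(a\tmlt b,c)$, identification with the mixed terms of $C(a+b+c)$, derivatives of a cubic from its polarization, the same chain of equalities for the $H$-adjoint, and vanishing diagonal blocks for the trace. There is one slip to correct.

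In part (1), the displayed identity $B(a\tmlt b,c)=\sum_{\sigma}\om(a_{\sigma(1)},b_{\sigma(2)},c_{\sigma(3)})$ is missing the factor $\sgn(\sigma)$. For instance, the term $B([b_{2},a_{3}],c_{1})$ of the product equals $-\om(a_{3},b_{2},c_{1})$, whereas your sum contains $+\om(a_{3},b_{2},c_{1})$.

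As written, your right-hand side is alternating in $(a,b,c)$: swapping $a$ and $b$ and reindexing by $\sigma\circ(1\,2)$ flips its sign. So it is not symmetric, contrary to what you claim. It also vanishes at $a=b=c$ instead of giving $6C(a)$, which would break your derivation of the derivative formulas in (2).

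The correct formula, which is the one the paper uses, is
\[
B(a\tmlt b,c)=\sum_{\sigma\in S_{3}}\sgn(\sigma)\,\om(a_{\sigma(1)},b_{\sigma(2)},c_{\sigma(3)}).
\]
Equivalently, every term of the product has its component indices in cyclic order. By cyclic invariance of $\om$ alone, $B(a\tmlt b,c)$ is then the unsigned sum of the six slot-matched terms $\om(a_{1},b_{2},c_{3})+\om(c_{1},a_{2},b_{3})+\dots$. This is exactly what your verbal description (``the six placements of $a,b,c$ into slots $1,2,3$ of $C$'') says.

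With that reading, the symmetry, the inclusion--exclusion identification, and $B(a\tmlt a,a)=6C(a)$ all hold, and the rest of your argument goes through unchanged. Note that your test case $a=\iota_{1}(x)$, $b=\iota_{2}(y)$, $c=\iota_{3}(z)$ could not detect the missing sign, since only the identity permutation contributes there.
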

\begin{proof}
Bilinearity and commutativity are evident from \eqref{nahmproduct}. By \eqref{nahmproduct} and the cyclic invariance of $\om$,
\begin{align}\label{eq:polarizationproof}
B(a \tmlt b, c) = \sum_{\si \in S_{3}}\sgn(\si)\om(a_{\si(1)}, b_{\si(2)}, c_{\si(3)}),
\end{align}
where $S_{3}$ is the symmetric group and $\sgn:S_{3}\to \{\pm 1\}$ the sign representation, and this equals the right-hand side of \eqref{eq:polarization} by the trilinearity of $\om$.
This proves \eqref{eq:polarization} and \ref{nahm1}. The formula for $a \tmlt a$ is \eqref{nahmproduct}, and \eqref{eq:derivativesC} follows from \eqref{eq:polarization}, since the derivatives of the cubic form $C$ are given by its polarization. Because $\tau$ is a conjugate-linear order $2$ automorphism of $\g$, \eqref{nahmproduct} gives $\tau(a \tmlt b) = \tau a \tmlt \tau b$. By \ref{nahm1}, $H(a \tmlt b, c) = -B(a \tmlt b, \tau c) = -B(b, a \tmlt \tau c) = -B(b, \tau(\tau a \tmlt c)) = H(b, \tau a \tmlt c)$. This shows \ref{nahm3}.
Finally, by \eqref{nahmproduct} the $i$th component of $\lin(a)b$ does not depend on $b_{i}$, so $\lin(a)$ has vanishing diagonal blocks and hence has trace zero. This shows \ref{nahm4}.
\end{proof}
By Lemma \ref{lem:nahmbasic}, $(\g^{3}, \tmlt)$ is a commutative algebra with a nondegenerate invariant symmetric bilinear form and trace-free multiplication operators, which is the class of algebras studied in \cite{Fox-cubicpoly, Fox-trace}. The novel ingredient here is the conjugation $\tau$, and with it the notion of a $\tau$-twisted idempotent.

\begin{definition}\label{def:twisted}
In a complex algebra $\alg$ equipped with a conjugation $\tau$, that is, a conjugate-linear algebra automorphism satisfying $\tau^{2} = \id_{\alg}$, an element $e \in \alg$ is a \emph{$\tau$-twisted idempotent} if $e \tmlt e = \tau e$. The set of nonzero $\tau$-twisted idempotents in $\alg$ is denoted $\idem_{\tau}(\alg)$.
\end{definition}
In what follows, $\tau$-twisted idempotents are those $\idem_{\tau}(\g^{3}, \tmlt)$ of the Nahm algebra $(\g^{3}, \tmlt)$ with the conjugation $\tau$ of \eqref{eq:componentwise}.

\subsection{Critical points of $W$ and $\tau$-twisted idempotents}
Write $\crtnt(f)$ for the set of critical points of a smooth function $f$ at which $f$ is nonzero. 

The torus $(\com^{\ast})^{3}$ acts effectively on $\g^{3}$ by $c \cdot a = (c_{1}a_{1}, c_{2}a_{2}, c_{3}a_{3})$, and this action satisfies $W(c \cdot a) = W(a)$ and
\begin{align}\label{caca}
(c\cdot a) \tmlt (c\cdot a) = \hat{c}\cdot (a\tmlt a),
\end{align}
where 
\begin{align}\label{hatc}
\hat{c} = (c_{2}c_{3}, c_{3}c_{1}, c_{1}c_{2}).
\end{align}

\begin{lemma}\label{lem:critW}
An element $a \in (\g \setminus\{0\})^{3}$ with $C(a) \neq 0$ lies in $\crtnt(W)$ if and only if
\begin{align}\label{eq:W}
&[a_{1}, a_{2}] = -\frac{C(a)}{\norm{a_{3}}^{2}}\tau a_{3},&
&[a_{2}, a_{3}] = -\frac{C(a)}{\norm{a_{1}}^{2}}\tau a_{1},&
&[a_{3}, a_{1}] = -\frac{C(a)}{\norm{a_{2}}^{2}}\tau a_{2}.&
\end{align}
Equivalently, $a \in (\g \setminus\{0\})^{3}$ lies in $\crtnt(W)$ if and only if $C(a) \neq 0$ and $a \tmlt a \in (\com^{\ast})^{3}\cdot \tau a$.
\end{lemma}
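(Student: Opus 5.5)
We compute the first variation of $\log W$ on the open set where $C(a)\neq 0$ and all $a_i\neq 0$. There $W$ is smooth and positive, so $a$ is a critical point of $W$ exactly when it is a critical point of
\[
\log W = \log C + \log\overline{C} - \sum_{i}\log H(a_{i},a_{i}).
\]
We take the real differential of this function in a direction $b\in\g^{3}$, with all differentials understood as real-linear functionals on $\g^{3}$.

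By Lemma \ref{lem:nahmbasic}\ref{nahm2}, $dC_{a}(b)=\tfrac12 B(a\tmlt a,b)$. Using $B(u,v)=-H(u,\tau v)$ and the componentwise formula $a\tmlt a = 2([a_2,a_3],[a_3,a_1],[a_1,a_2])$, this becomes
\[
dC_a(b) = -\sum_{i} H\big((\tfrac12 a\tmlt a)_i,\tau b_i\big) = -\sum_i \overline{H\big(\tau b_i, (\tfrac12 a\tmlt a)_i\big)} \cdot(\text{conj. rule}).
\]
It is cleaner to write $dC_a(b)=\sum_i B(p_i,b_i)$ with $p_1=[a_2,a_3]$, $p_2=[a_3,a_1]$, $p_3=[a_1,a_2]$. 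Then
\[
d(\log W)_a(b) = 2\,\re\Big(\frac{dC_a(b)}{C(a)}\Big) - \sum_i \frac{2\re H(b_i,a_i)}{\norm{a_i}^2}.
\]
Since $H(b_i,a_i)=-B(b_i,\tau a_i)$, the vanishing of this for all $b$ reads
\[
2\re\sum_i B\Big(\frac{p_i}{C(a)}+\frac{\tau a_i}{\norm{a_i}^2},\,b_i\Big)=0 \quad\text{for all } b .
\]
The expression inside the real part is complex linear in $b$. Replacing $b$ by $ib$ shows that the complex linear functional itself vanishes. Nondegeneracy of $B$ on each factor then gives $p_i = -C(a)\tau a_i/\norm{a_i}^2$ for each $i$, which is exactly \eqref{eq:W}. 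Every step reverses, so \eqref{eq:W} is also sufficient for criticality.

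The reformulation follows directly. If \eqref{eq:W} holds, then $a\tmlt a = 2(p_1,p_2,p_3) = c\cdot\tau a$ with $c_i=-2C(a)/\norm{a_i}^2\in\com^{\ast}$.

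Conversely, suppose $C(a)\neq0$ and $a\tmlt a=c\cdot\tau a$, so that $p_i=\tfrac{c_i}{2}\tau a_i$. We identify each $c_i$ by pairing with $a_i$. Cyclic invariance of $\om$ gives
\[
B(a_i,p_i)=C(a),
\]
while $B(a_i,\tau a_i)=-\norm{a_i}^2$. Together these give $C(a)=-\tfrac{c_i}{2}\norm{a_i}^2$. Hence $c_i=-2C(a)/\norm{a_i}^2$ for each $i$, which is \eqref{eq:W}. In this direction $C(a)\neq 0$ is needed as a hypothesis, and $a_i\neq0$ is assumed throughout.

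The only delicate point is passing from the real-part condition to the complex one, and the $J$-trick (replacing $b$ by $ib$) handles it.
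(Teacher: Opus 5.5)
Your proof is correct and follows the paper's argument. The paper differentiates $W = |C|^{2}/\Pi$ directly and rewrites $d(|C|^{2})_{a}(b)$ as $-\re H\big(b, C(a)\tau(a\tmlt a)\big)$ so that nondegeneracy of $\re H$ applies, which amounts to the same thing as your logarithmic derivative combined with the substitution $b \mapsto ib$ and nondegeneracy of $B$; your converse, pairing $p_{i} = \tfrac{c_{i}}{2}\tau a_{i}$ with $a_{i}$ to get $c_{i} = -2C(a)/\norm{a_{i}}^{2}$, is exactly the paper's (the abandoned intermediate display before ``It is cleaner to write'' should simply be deleted).
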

\begin{proof}
Let $a \in (\g\setminus\{0\})^{3}$ satisfy $C(a) \neq 0$, and let $\Pi(a) := \prod_{i=1}^{3}\norm{a_{i}}^{2}$. For $b \in \g^{3}$, by \eqref{eq:derivativesC}, 
\begin{align}\label{eq:dC}
d(|C|^{2})_{a}(b) = 2\re\big(\overline{C(a)}\tfrac{1}{2}B(b, a \tmlt a)\big) = -\re H(b, C(a)\tau(a \tmlt a)),
\end{align}
while 
\begin{align}\label{eq:dPi}
d\Pi_{a}(b) = 2\Pi(a)\sum_{i=1}^{3}\re H(b_{i}, a_{i})/\norm{a_{i}}^{2}. 
\end{align}
Since $W(a) = |C(a)|^{2}/\Pi(a)$, \eqref{eq:dC} and \eqref{eq:dPi} show 
\begin{align}\label{eq:dW}
dW_{a}(b) = -\frac{1}{\Pi(a)}\re H\big(b, C(a)\tau(a \tmlt a)\big) - 2W(a)\sum_{i=1}^{3}\frac{\re H(b_{i}, a_{i})}{\norm{a_{i}}^{2}}.
\end{align}
Because $\re H$ is nondegenerate and $W(a)\Pi(a) = |C(a)|^{2}$, by \eqref{eq:dW}, $dW_{a} = 0$ if and only if $C(a)\tau(a \tmlt a)_{i} = -2|C(a)|^{2}a_{i}/\norm{a_{i}}^{2}$ for each $i$, that is, $(a \tmlt a)_{i} = -2C(a)\tau a_{i}/\norm{a_{i}}^{2}$, which yields \eqref{eq:W} by Lemma \ref{lem:nahmbasic}\ref{nahm2}.

By \eqref{eq:W}, if $a \in \crtnt(W)$, then $a \tmlt a \in (\com^{\ast})^{3}\cdot \tau a$. If $C(a) \neq 0$ and $a \tmlt a = c \cdot \tau a$ with $c \in (\com^{\ast})^{3}$, then $2[a_{1}, a_{2}] = c_{3}\tau a_{3}$ by Lemma \ref{lem:nahmbasic}\ref{nahm2}, so by \eqref{eq:omegaformulas}, $C(a) = B([a_{1}, a_{2}], a_{3}) = \tfrac{1}{2}c_{3}B(\tau a_{3}, a_{3}) = -\tfrac{1}{2}c_{3}\norm{a_{3}}^{2}$, and $\tfrac{1}{2}c_{3} = -C(a)/\norm{a_{3}}^{2}$. As the same identities hold when the indices are permuted cyclically, this shows \eqref{eq:W}, so $a \in \crtnt(W)$.
\end{proof}

\begin{lemma}\label{thm:idem}
\leavevmode\noindent
\begin{enumerate}[label=(\arabic*), ref=(\arabic*)]
\item\label{idem2} For $e\in \idem_{\tau}(\g^{3}, \tmlt)$, the components $e_{1}$, $e_{2}$, and $e_{3}$ are pairwise $H$-orthogonal, satisfy $\norm{e_{1}} = \norm{e_{2}} = \norm{e_{3}} = 3^{-1/2}\nnorm{e} > 0$, and, for distinct $i, j \in \{1, 2, 3\}$,
\begin{align}\label{eq:idemnorms}
 W(e) = F(e_{i}, e_{j}) =  \frac{3}{4H(e, e)} = -\frac{1}{8C(e)}.
\end{align}
\item\label{idem1} $\crtnt(W) = (\com^{\ast})^{3}\cdot \idem_{\tau}(\g^{3}, \tmlt)$.
\item\label{idem3} The Lie subalgebra $\subs$ of $\g$ generated by the components of $e \in \idem_{\tau}(\g^{3}, \tmlt)$ is $\tau$-stable and semisimple. 
\end{enumerate}
\end{lemma}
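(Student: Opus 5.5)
The plan is to read everything off the componentwise form of $e \tmlt e = \tau e$. By Lemma \ref{lem:nahmbasic}\ref{nahm2} this equation says
\begin{align*}
2[e_{2}, e_{3}] = \tau e_{1}, \qquad 2[e_{3}, e_{1}] = \tau e_{2}, \qquad 2[e_{1}, e_{2}] = \tau e_{3}.
\end{align*}
Part \ref{idem2} then reduces to identities for $\om$, part \ref{idem1} to a normalization in the $(\com^{\ast})^{3}$-orbit using Lemma \ref{lem:critW}, and part \ref{idem3} to a short structural argument.

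For \ref{idem2}, the three cyclic forms of $C(e)$ in \eqref{eq:omegaformulas} give
\begin{align*}
C(e) = B([e_{1}, e_{2}], e_{3}) = \tfrac{1}{2}B(\tau e_{3}, e_{3}) = -\tfrac{1}{2}\norm{e_{3}}^{2},
\end{align*}
and likewise $C(e) = -\tfrac{1}{2}\norm{e_{1}}^{2} = -\tfrac{1}{2}\norm{e_{2}}^{2}$. Since $e \neq 0$, some component is nonzero, so $C(e) \neq 0$ and all three norms are equal and positive; each is therefore $3^{-1/2}\nnorm{e}$. For orthogonality,
\begin{align*}
H(e_{1}, e_{2}) = -B(e_{1}, \tau e_{2}) = -2B(e_{1}, [e_{3}, e_{1}]) = -2\om(e_{1}, e_{3}, e_{1}) = 0,
\end{align*}
and cyclically for the other pairs. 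Put $n := \norm{e_{i}}^{2} = H(e,e)/3$. Then
\begin{align*}
W(e) = \frac{n^{2}/4}{n^{3}} = \frac{1}{4n} = \frac{3}{4H(e,e)} = -\frac{1}{8C(e)}.
\end{align*}
Also $\norm{[e_{i}, e_{j}]}^{2} = \tfrac{1}{4}\norm{\tau e_{k}}^{2} = n/4$, so $F(e_{i}, e_{j}) = (n/4)/n^{2} = 1/(4n)$ as well.

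For \ref{idem1}, the inclusion $\supseteq$ follows because an idempotent satisfies $C(e) \neq 0$ and $e \tmlt e = 1\cdot\tau e$, so Lemma \ref{lem:critW} places it in $\crtnt(W)$; invariance of $W$ under the torus, together with \eqref{caca}, keeps the orbit inside $\crtnt(W)$. For $\subseteq$, let $a \in \crtnt(W)$. First rescale by positive reals so that $\norm{a_{1}} = \norm{a_{2}} = \norm{a_{3}} =: r$, then multiply $a_{1}$ by a unimodular scalar so that $C(a) < 0$; this stays in $\crtnt(W)$. By \eqref{eq:W} we then have $a \tmlt a = \kappa\,\tau a$ with $\kappa = -2C(a)/r^{2} > 0$. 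Now \eqref{caca} shows that $\kappa^{-1}a$ is a $\tau$-twisted idempotent, since $(\lambda a)\tmlt(\lambda a) = \lambda^{2}\kappa\,\tau a = \tau(\lambda a)$ for real $\lambda = \kappa^{-1}$.

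For \ref{idem3}, note first that $\tau e_{i} = 2[e_{j}, e_{k}] \in \subs$. Since $\tau$ is a conjugate-linear automorphism, $\tau\subs$ is the subalgebra generated by the $\tau e_{i}$, hence $\tau\subs \subset \subs$. The same identity gives $e_{i} = 2\tau[e_{j}, e_{k}] = 2[\tau e_{j}, \tau e_{k}] \in [\subs, \subs]$, so $\subs$ is perfect. Since $\subs$ is $\tau$-stable, $\subs = \subs_{0} \oplus i\subs_{0}$ with $\subs_{0} = \subs \cap \gz$. The restriction of $-B$ to $\subs_{0}$ is a positive definite $\ad(\subs_{0})$-invariant inner product, so $\subs_{0}$ is a compact Lie algebra and hence reductive; therefore its complexification $\subs$ is reductive. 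A perfect reductive Lie algebra has trivial center and is thus semisimple.

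The only step requiring a real choice is the normalization in \ref{idem1}, namely arranging that the scalars $c_{i}$ in $a \tmlt a = c\cdot\tau a$ become a common positive real. This works because \eqref{eq:W} forces all the $c_{i}$ to share the phase of $-C(a)$, after which equalizing the norms makes them equal.
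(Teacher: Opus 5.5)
Your proof is correct and follows the paper's argument. In parts (1) and (3) you make the same use of $2[e_{j}, e_{k}] = \tau e_{i}$: the common norm comes from $C(e) = -\tfrac{1}{2}\norm{e_{i}}^{2}$, orthogonality from $\om$ being alternating, and semisimplicity from $\tau$-stability, perfectness, and compactness of $\subs \cap \gz$. The only real difference is in the inclusion $\crtnt(W) \subset (\com^{\ast})^{3}\cdot\idem_{\tau}(\g^{3}, \tmlt)$: you normalize in three steps (equalize the norms by positive reals, rotate the phase of $a_{1}$ so that $C(a) < 0$, then rescale by the real number $\kappa^{-1}$), whereas the paper writes down a single torus element built from a cube root $z$ of $-C(a)^{2}/|C(a)|^{2}$; your version is slightly more transparent and equally rigorous.
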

\begin{proof}
By Lemma \ref{lem:nahmbasic}\ref{nahm2}, for $(i, j, k)$ a cyclic permutation of $(1, 2, 3)$, $e \tmlt e = \tau e$ means that $2[e_{j}, e_{k}] = \tau e_{i}$. By the invariance of $B$, $2\om(e_{i}, e_{j}, e_{k}) = 2C(e) = 2B(e_{i}, [e_{j}, e_{k}]) = B(e_{i}, \tau e_{i}) = -\norm{e_{i}}^{2}$, so the components of $e$ have a common norm, which is positive because $e \neq 0$. Similarly $H(e_{i}, e_{j}) = -B(e_{i}, \tau e_{j}) = -2B(e_{i}, [e_{k}, e_{i}]) = 2B([e_{i}, e_{i}], e_{k}) = 0$. The identity \eqref{eq:idemnorms} and claim \ref{idem2} follow.

By \ref{idem2}, every $e \in \idem_{\tau}(\g^{3}, \tmlt)$ has nonzero components and $C(e) \neq 0$, so, since $e \tmlt e = \tau e$, Lemma \ref{lem:critW} gives $(\com^{\ast})^{3}\cdot \idem_{\tau}(\g^{3}, \tmlt) \subset \crtnt(W)$. Each map $a \mapsto c\cdot a$, $c \in (\com^{\ast})^{3}$, is a linear automorphism of $\g^{3}$ preserving $W$, so it maps critical points of $W$ to critical points of $W$. Let $a \in \crtnt(W)$ and $c \in (\com^{\ast})^{3}$. By \eqref{caca}, \eqref{hatc}, \eqref{eq:W}, and the conjugate-linearity of $\tau$,
\begin{align}\label{cdotasquared}
\begin{aligned}
(c\cdot a)\tmlt (c\cdot a) - \tau(c \cdot a) = \hat{c}\cdot (a\tmlt a) - \bar{c}\cdot \tau a = -2C(a)\left(\tfrac{c_{2}c_{3}}{\norm{a_{1}}^{2}}, \tfrac{c_{1}c_{3}}{\norm{a_{2}}^{2}}, \tfrac{c_{1}c_{2}}{\norm{a_{3}}^{2}} \right)\cdot \tau a - \bar{c}\cdot \tau a.
\end{aligned}
\end{align}
Let $z \in \com^{\ast}$ solve $z^{3} = -\tfrac{C(a)^{2}}{|C(a)|^{2}}$ and define $c \in (\com^{\ast})^{3}$ by
\begin{align}\label{eq:cchoice}
c = \tfrac{z}{2C(a)}\left(\norm{a_{2}}\norm{a_{3}}, \norm{a_{3}}\norm{a_{1}}, \norm{a_{1}}\norm{a_{2}}\right).
\end{align}
Writing $P := \norm{a_{1}}\norm{a_{2}}\norm{a_{3}}$ and $\kappa := z/(2C(a))$, the choice \eqref{eq:cchoice} gives $c_{j}c_{k} = \kappa^{2}P\norm{a_{i}}$ and $\bar{c}_{i} = \bar{\kappa}P/\norm{a_{i}}$ for $\{i, j, k\} = \{1, 2, 3\}$, so the $i$th coefficient in \eqref{cdotasquared} is $-(2C(a)\kappa^{2} + \bar{\kappa})P/\norm{a_{i}}$, which vanishes because $|z| = 1$ and $z^{3}\overline{C(a)} = -C(a)$. Hence \eqref{cdotasquared} equals $0$, so $c \cdot a \in \idem_{\tau}(\g^{3}, \tmlt)$ and $a = c^{-1}\cdot(c \cdot a) \in (\com^{\ast})^{3}\cdot\idem_{\tau}(\g^{3}, \tmlt)$. This shows \ref{idem1}. 

By \eqref{nahmproduct}, the components of $e \tmlt e = \tau e$ are brackets of components of $e$, so they lie in $[\subs, \subs] \subset \subs$. Hence $\tau$ maps the generators of $\subs$, and so all iterated brackets of them, into $\subs$, and so $\tau\subs = \subs$ because $\tau^{2} = \id_{\g}$. Then $\subs = \subs_{0} \oplus i\subs_{0}$ with $\subs_{0} = \subs \cap \gz$, and $-B$ is a positive definite invariant inner product on $\subs_{0}$, so $\subs_{0}$, and hence $\subs$, is reductive, $\subs = \mathfrak{z}(\subs) \oplus [\subs, \subs]$. The components of $e = \tau(e \tmlt e) = \tau e \tmlt \tau e$ are brackets of components of $\tau e$, which lie in $\subs$, so they lie in $[\subs, \subs]$. Hence $\subs = [\subs, \subs]$, $\mathfrak{z}(\subs) = 0$, and $\subs$ is semisimple. This shows \ref{idem3}.
\end{proof}

Note that the Lie subalgebra $\subs$ generated by the components of $e \in \idem_{\tau}(\g^{3}, \tmlt)$ depends only on the orbit $(\com^{\ast})^{3}\cdot e$ of $e$.

By Lemma \ref{thm:idem}, $\max W = \comass{\om}^{2}$ is attained exactly at the $(\com^{\ast})^{3}$-orbits of the $\tau$-twisted idempotents of minimal norm, and \eqref{eq:idemnorms} shows that Theorem \ref{thm:C}\ref{thmC2} is equivalent to $\comass{\om}^{2} = 1/\hv$ together with the description of the maximizing idempotents.

\subsection{The Hessian of $W$}\label{sec:hessian}
The purpose of this subsection is to calculate the Hessian of $W$ at $e \in \idem_{\tau}(\g^{3}, \tmlt)$.

\begin{lemma}\label{hess2}
For $e \in \idem_{\tau}(\g^{3}, \tmlt)$ and $i \in \{1, 2, 3\}$, let $\hat{e}_{i} := \iota_{i}(e_{i}) \in \g^{3}$ be the image of $e_{i}$ under the linear embedding of $\g$ as the $i$th factor, and define
\begin{align}\label{eq:UR}
&\Usp_{e} := \spn_{\rea}\{\hat{e}_{i}, J\hat{e}_{i}: i \in \{1, 2, 3\}\}, &
&\Uperp_{e} = \{b \in \g^{3}: H(b_{i}, e_{i}) = 0 \text{ for } i \in \{1, 2, 3\}\}.
\end{align}
Then $\Uperp_{e}$ is the orthogonal complement with respect to $\re H$ of $\Usp_{e}$ in $\g^3$ and the real linear operator $\tau\lin(e)$ is self-adjoint with respect to the real inner product $\re H$, anticommutes with $J$, and preserves $\Usp_{e}$ and $\Uperp_{e}$.
\end{lemma}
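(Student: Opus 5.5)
The proof is a direct verification from Lemma \ref{lem:nahmbasic} and the twisted idempotent relations $2[e_{j}, e_{k}] = \tau e_{i}$, for $(i, j, k)$ cyclic, recorded in the proof of Lemma \ref{thm:idem}. The orthogonal-complement statement is immediate. For $b \in \g^{3}$, $\re H(b, \hat e_{i}) = \re H(b_{i}, e_{i})$ and $\re H(b, J\hat e_{i}) = \re \overline{i}H(b_{i}, e_{i}) = \im H(b_{i}, e_{i})$. So $b$ is $\re H$-orthogonal to $\Usp_{e}$ exactly when $H(b_{i}, e_{i}) = 0$ for all $i$, which is the definition of $\Uperp_{e}$.

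For the properties of $\tau\lin(e)$ I will argue as follows. The map $\tau\lin(e)$ is real linear and conjugate linear, because $\lin(e)$ is complex linear and $\tau$ is conjugate linear. Hence $\tau\lin(e)Jb = \tau(i\,e\tmlt b) = -i\,\tau(e \tmlt b) = -J\tau\lin(e)b$, which gives anticommutation with $J$. For self-adjointness, I use Lemma \ref{lem:nahmbasic}\ref{nahm3} and the fact that $\tau e = e\tmlt e$. Then
\[
H(\tau(e\tmlt b), c) = \overline{H(e\tmlt b, \tau c)} = \overline{H(b, \tau e \tmlt \tau c)} = \overline{H(b, \tau(e\tmlt c))} = H(\tau(e \tmlt c), b).
\]
The first equality uses $H(\tau x, y) = \overline{H(x, \tau y)}$. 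Taking real parts gives $\re H(\tau\lin(e) b, c) = \re H(b, \tau\lin(e) c)$, since $\re H$ is symmetric.

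It remains to show that $\Usp_{e}$ is preserved. Preservation of $\Uperp_{e}$ then follows from self-adjointness with respect to $\re H$, since $\Uperp_{e}$ is the orthogonal complement of $\Usp_{e}$. By anticommutation with $J$, it is enough to check the real basis vectors $\hat e_{i}$; the images of $J\hat e_{i}$ are then $-J$ times those images. By \eqref{nahmproduct}, $e \tmlt \hat e_{1} = (0, [e_{3}, e_{1}], [e_{1}, e_{2}])$. Applying $\tau$ and the relations $2[e_{3}, e_{1}] = \tau e_{2}$ and $2[e_{1}, e_{2}] = \tau e_{3}$ gives $\tau\lin(e)\hat e_{1} = \tfrac{1}{2}(\hat e_{2} + \hat e_{3})$, which lies in $\Usp_{e}$. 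The other indices follow in the same way by cyclic symmetry.

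The only step needing care is tracking the conjugate linearity of $\tau$ in the self-adjointness computation and the precise signs in \eqref{nahmproduct}. Everything else is bookkeeping.
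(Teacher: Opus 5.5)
Your proof is correct and follows essentially the same route as the paper: the orthogonal complement via $H(b_{i}, e_{i}) = \re H(b, \hat{e}_{i}) + i\re H(b, J\hat{e}_{i})$, anticommutation from conjugate-linearity, the computation $\tau\lin(e)\hat{e}_{1} = \tfrac{1}{2}(\hat{e}_{2} + \hat{e}_{3})$, and preservation of $\Uperp_{e}$ by self-adjointness. The differences are cosmetic: the paper gets self-adjointness directly from the symmetry of $-\re B(a \tmlt b, e)$ rather than via Lemma \ref{lem:nahmbasic}\ref{nahm3}, and your self-adjointness computation actually uses only that $\tau$ is an automorphism, not the relation $\tau e = e \tmlt e$ that you cite.
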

\begin{proof}
Note that $\Usp_{e}$ is the real tangent space at $e$ of the orbit $(\com^{\ast})^{3}\cdot e$ and consequently $J\Usp_{e} = \Usp_{e}$. That $\Uperp_{e}$ is the orthogonal complement of $\Usp_{e}$ with respect to $\re H$, follows from $H(b_{i}, e_{i}) = \re H(b, \hat{e}_{i}) + i\re H(b, J\hat{e}_{i})$, which shows that $\Uperp_{e} = \{b \in \g^{3}: \re H(b_{i}, e_{i}) = 0 = \re H(b_{i}, Je_{i}) \text{ for } i \in \{1, 2, 3\}\}$, from which there follows $J\Uperp_{e} = \Uperp_{e}$ because $\re H(Jb_{i}, e_{i}) = - \re H(b_{i}, Je_{i})$.

By the invariance of $B$ with respect to $\tmlt$ (Lemma \ref{lem:nahmbasic}\ref{nahm1}) and the commutativity of $\tmlt$, $\re H(a, \tau\lin(e)b) = -\re B(a, e \tmlt b) = -\re B(a\tmlt b, e)$ is symmetric in $a$ and $b$, so $\tau\lin(e)$ is self-adjoint with respect to $\re H$.

By \eqref{nahmproduct} and $e \tmlt e = \tau e$, $e \tmlt \hat{e}_{1} = (0, [e_{3}, e_{1}], [e_{1}, e_{2}]) = \tfrac{1}{2}(0, \tau e_{2}, \tau e_{3})$, and applying $\tau$ gives $\tau\lin(e)\hat{e}_{1} = \tfrac{1}{2}(\hat{e}_{2} + \hat{e}_{3})$. Since $\lin(e)$ is complex linear and $\tau$ is conjugate-linear, $\tau\lin(e)J = -J\tau\lin(e)$. Together with the preceding this implies $\tau\lin(e)J\hat{e}_{1} = -\tfrac{1}{2}(J\hat{e}_{2} + J\hat{e}_{3})$. The same computations for $\hat{e}_{2}$ and $\hat{e}_{3}$ show that
\begin{align}\label{idem4}
&\tau\lin(e)\hat{e}_{i} = \tfrac{1}{2}(\hat{e}_{j} + \hat{e}_{k}), && \tau\lin(e)J\hat{e}_{i} = -\tfrac{1}{2}(J\hat{e}_{j} + J\hat{e}_{k})
\end{align}
for $\{i, j, k\}$ a permutation of $\{1, 2, 3\}$. From \eqref{idem4} it follows that $\tau\lin(e)\Usp_{e} \subset \Usp_{e}$ and, moreover, that the restriction of $\tau \lin(e)$ to $\Usp_{e}$ is diagonalizable with eigenvalues $\pm 1$, each with multiplicity $1$, and $\pm 1/2$, each with multiplicity $2$, so that $\tau\lin(e)\Usp_{e} = \Usp_{e}$.
Finally, $\tau\lin(e)\Uperp_{e} \subset \Uperp_{e}$ follows from $\tau \lin(e)\Usp_{e} = \Usp_{e}$ by the self-adjointness of $\tau\lin(e)$ and nondegeneracy of $\re H$.
\end{proof}

For a smooth function $f$ on a real vector space and a critical point $p$ of $f$, the Hessian $\hess f_{p}$ is the symmetric bilinear form with associated quadratic form $\hess f_{p}(b) = \tfrac{d^{2}}{d\varepsilon^{2}}\big|_{\varepsilon = 0}f(p + \varepsilon b)$; for any curve $\gamma$ with $\gamma(0) = p$, $(f \circ \gamma)''(0) = \hess f_{p}(\gamma'(0))$.

The Hessian of $W$ at $e$ vanishes in the directions $\Usp_{e}$ tangent to the $(\com^{\ast})^{3}$-orbit, and \eqref{eq:hessR} describes it on the complement $\Uperp_{e}$.
\begin{lemma}\label{thm:hessW}
Let $e \in \idem_{\tau}(\g^{3}, \tmlt)$, and let $\Usp_{e}$ and $\Uperp_{e}$ be as in \eqref{eq:UR}. 
For $b \in \g^{3}$, 
\begin{align}\label{eq:hessW}
\hess W_{e}(b) = 16W(e)^{2}\Big(\re H\big(b, (\tau\lin(e) - \tfrac{1}{2})b\big) + 2W(e)\big(|H(b, e)|^{2} - 4(\eta_{1}\eta_{2} + \eta_{2}\eta_{3} + \eta_{3}\eta_{1})\big)\Big),
\end{align}
where $\eta_{i} := \re H(b_{i}, e_{i})$ for $i \in \{1, 2, 3\}$.
In particular, for $b \in \Uperp_{e}$,
\begin{align}\label{eq:hessR}
\hess W_{e}(b) = 16W(e)^{2}\re H\big(b, (\tau\lin(e) - \tfrac{1}{2})b\big),
\end{align}
and
\begin{align}\label{eq:hessJ}
\hess W_{e}(b) + \hess W_{e}(Jb) = -16W(e)^{2}\nnorm{b}^{2}.
\end{align}
Moreover $\hess W_{e}(u, b) = 0$ for $u \in \Usp_{e}$ and $b \in \g^{3}$, so $\hess W_{e}(b) = \hess W_{e}(b_{\Uperp_{e}})$, where $b_{\Uperp_{e}}$ is the orthogonal projection of $b$ on $\Uperp_{e}$, and $\hess W_{e}(b_{\Uperp_{e}})$ is given by \eqref{eq:hessR}.
\end{lemma}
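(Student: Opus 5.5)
The plan is to compute $\hess W_{e}(b) = \tfrac{d^{2}}{d\varepsilon^{2}}\big|_{\varepsilon=0}W(e+\varepsilon b)$ by expanding numerator and denominator of $W = |C|^{2}/\Pi$ to second order in $\varepsilon$. The inputs are the derivative formulas \eqref{eq:derivativesC}, the relation $e\tmlt e = \tau e$, and the identities from Lemma \ref{thm:idem}\ref{idem2}: $\norm{e_{i}}^{2} = m := \tfrac{1}{3}H(e,e)$, $C(e) = -m/2$, and $W(e) = 1/(4m)$.

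\textbf{Expansion of the numerator.} Writing $C(e+\varepsilon b) = C(e) + \varepsilon\, \tfrac12 B(\tau e, b) + \tfrac{\varepsilon^{2}}{2}B(e\tmlt b, b) + O(\varepsilon^{3})$, we have $\tfrac12 B(\tau e,b) = -\tfrac12 \overline{H(b,e)}$ and $B(e\tmlt b,b) = -H(b,\tau\lin(e)b)$. It follows that
\begin{align*}
|C(e+\varepsilon b)|^{2} = \tfrac{m^{2}}{4} + \varepsilon\,\tfrac{m}{2}\re H(b,e) + \varepsilon^{2}\Big(\tfrac14|H(b,e)|^{2} + \tfrac{m}{2}\re H(b,\tau\lin(e)b)\Big) + O(\varepsilon^{3}).
\end{align*}

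\textbf{Expansion of the denominator.} Each factor expands as $\norm{e_{i}+\varepsilon b_{i}}^{2} = m + 2\varepsilon\eta_{i} + \varepsilon^{2}\norm{b_{i}}^{2}$, so
\begin{align*}
\Pi = m^{3} + 2\varepsilon m^{2}\textstyle\sum\eta_{i} + \varepsilon^{2}\big(m^{2}\nnorm{b}^{2} + 4m\sum_{i<j}\eta_{i}\eta_{j}\big) + O(\varepsilon^{3}).
\end{align*}
Note that $\re H(b,e) = \sum\eta_{i}$. I would compute the second-order coefficient of the quotient using $(N/D)'' = N''/D - 2N'D'/D^{2} - N D''/D^{2} + 2N D'^{2}/D^{3}$, with all quantities evaluated at $\varepsilon=0$. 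Since $N' = \tfrac m2 S$ and $D' = 2m^{2}S$, where $S=\sum\eta_i$, the $S^{2}$ terms from $-2N'D'$ and $2ND'^{2}$ should cancel. The remaining terms should assemble, after factoring out $16W(e)^{2} = 1/m^{2}$ and using $2W(e) = 1/(2m)$, into \eqref{eq:hessW}: the $\re H(b,\tau\lin(e)b)$ term, the $-\tfrac12\nnorm{b}^{2}$ coming from $ND''$, $|H(b,e)|^{2}$, and the cross term $\sum_{i<j}\eta_{i}\eta_{j}$. Tracking these constants is the main bookkeeping risk, and I would check the result against the example $b = J\hat e_{1}$, where $W$ is constant along the orbit.

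\textbf{The special cases.} For $b\in\Uperp_{e}$, all $\eta_{i}$ and $H(b,e) = \sum_{i} H(b_{i},e_{i})$ vanish, which gives \eqref{eq:hessR}. Since $J\Uperp_{e} = \Uperp_{e}$ and $\tau\lin(e)$ anticommutes with $J$ (Lemma \ref{hess2}), we get $\re H(Jb,\tau\lin(e)Jb) = -\re H(Jb,J\tau\lin(e)b) = -\re H(b,\tau\lin(e)b)$. Adding this to the original expression cancels the $\tau\lin(e)$ terms and leaves $-\nnorm{b}^{2}$, which is \eqref{eq:hessJ}.

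\textbf{Vanishing on $\Usp_{e}$.} $W$ is constant on the $(\com^{*})^{3}$-orbit, and the orbit is a submanifold through the critical point $e$ with tangent space $\Usp_{e}$, so $\hess W_{e}(u)=0$ for $u\in\Usp_e$. To get $\hess W_e(u,b)=0$ for arbitrary $b$, I would argue that for $c(t)$ a one-parameter subgroup, $dW_{c(t)\cdot e}(c(t)\cdot b) = dW_{e}(b) = 0$ by invariance; differentiating in $t$ gives $\hess W_e(u,b) + dW_e(\dot c\cdot b)=0$, and the second term vanishes because $e$ is critical. Then, splitting $b = b_{\Usp}+b_{\Uperp}$ via Lemma \ref{hess2}, bilinearity gives $\hess W_e(b)=\hess W_e(b_{\Uperp})$.
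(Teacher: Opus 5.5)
Your proposal is correct and follows essentially the same route as the paper. The paper also expands $|C(e+\varepsilon b)|^{2}$ and $\prod_{i}\norm{e_{i}+\varepsilon b_{i}}^{2}$ to second order. It normalizes by $C(e)^{2}$ and $\prod_{i}\norm{e_{i}}^{2}$, so that the equal first-order terms make the $\varepsilon^{2}$ coefficient of $W(e+\varepsilon b)$ equal to $W(e)$ times the difference of the second-order coefficients; this is the same as your $S^{2}$ cancellation. It then obtains \eqref{eq:hessR}, \eqref{eq:hessJ}, and the vanishing on $\Usp_{e}$ as you do, by differentiating the torus invariance of $dW$ once more.

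One small slip: $\tfrac12 B(\tau e, b) = -\tfrac12 H(b,e)$, not $-\tfrac12\overline{H(b,e)}$, since it must be complex linear in $b$. Only its real part and modulus enter $|C(e+\varepsilon b)|^{2}$, so this does not affect your expansion, and your constants do assemble into \eqref{eq:hessW}.
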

\begin{proof}
By \eqref{eq:idemnorms}, $C(e) = -1/(8W(e))$ and $\norm{e_{i}}^{-2} = 4W(e)$ for $i \in \{1, 2, 3\}$. Let $b \in \g^{3}$. From
\begin{align}
\norm{e_{i} + \varepsilon b_{i}}^{2} = \norm{e_{i}}^{2} + 2\varepsilon\re H(b_{i}, e_{i}) + \varepsilon^{2}\norm{b_{i}}^{2} = \norm{e_{i}}^{2}\big(1 + 4W(e)(2\varepsilon\eta_{i} + \varepsilon^{2}\norm{b_{i}}^{2})\big), 
\end{align}
and $\re H(b, e) = \eta_{1} + \eta_{2} + \eta_{3}$ there follows
\begin{align}\label{eq:numdenfull2}
\begin{aligned}
\prod_{i=1}^{3}\frac{\norm{e_{i} + \varepsilon b_{i}}^{2}}{\norm{e_{i}}^{2}} &= 1 + 8W(e)\varepsilon\re H(b, e) + \varepsilon^{2}\Big(4W(e)\nnorm{b}^{2} + 64W(e)^{2}(\eta_{1}\eta_{2} + \eta_{2}\eta_{3} + \eta_{3}\eta_{1})\Big) + O(\varepsilon^{3}).
\end{aligned}
\end{align}
By \eqref{eq:derivativesC}, 
\begin{align}\label{dceb}
&dC_{e}(b) = \tfrac{1}{2}B(e\tmlt e, b) = \tfrac{1}{2}B(\tau e, b) = -\tfrac{1}{2}H(b, e), &&d^{2}C_{e}(b, b) = B(b, e\tmlt b) = -H(b, \tau\lin(e)b). 
\end{align}
Since $C(e)= -1/(8W(e))$ is real it follows from \eqref{dceb} that 
\begin{align}
\begin{aligned}
C(e + \varepsilon b) &=C(e) +\varepsilon dC_{e}(b) + \tfrac{1}{2}\varepsilon^{2}d^{2}C_{e}(b, b) + O(\varepsilon^{3})\\
& = C(e) - \tfrac{1}{2}\varepsilon H(b, e) - \tfrac{1}{2}\varepsilon^{2}H(b, \tau\lin(e)b) + O(\varepsilon^{3})\\
&= C(e)\big(1 + 4W(e)\varepsilon H(b, e) + 4W(e)\varepsilon^{2}H(b, \tau\lin(e)b)\big) + O(\varepsilon^{3}),
\end{aligned}
\end{align}
which yields
\begin{align}\label{eq:numdenfull}
\begin{aligned}
\frac{|C(e + \varepsilon b)|^{2}}{C(e)^{2}} &= 1 + 8W(e)\varepsilon\re H(b, e) + \varepsilon^{2}\Big(16W(e)^{2}|H(b, e)|^{2} + 8W(e)\re H(b, \tau\lin(e)b)\Big) + O(\varepsilon^{3}).
\end{aligned}
\end{align}
Because the first order terms in \eqref{eq:numdenfull2} and \eqref{eq:numdenfull} coincide, the coefficient of $\varepsilon^{2}$ in the quotient $W(e + \varepsilon b)$ is $W(e)$ times the difference of the coefficients of $\varepsilon^{2}$, and $\hess W_{e}(b)$ is twice this coefficient, which gives \eqref{eq:hessW}. If $b \in \Uperp_{e}$, then $H(b, e) = 0$ and $\eta_{1} = \eta_{2} = \eta_{3} = 0$, so \eqref{eq:hessW} becomes \eqref{eq:hessR}. By Lemma \ref{hess2}, $J\Uperp_{e} = \Uperp_{e}$ and $\tau\lin(e)$ anticommutes with $J$, so $\re H(Jb, \tau\lin(e)Jb) = -\re H(b, \tau\lin(e)b)$ for $b \in \Uperp_{e}$, and adding \eqref{eq:hessR} for $b$ and for $Jb$ gives \eqref{eq:hessJ}.

An element $w \in \com^{3}$ of the Lie algebra of $(\com^{\ast})^{3}$ generates the linear vector field $a \mapsto w \cdot a = (w_{1}a_{1}, w_{2}a_{2}, w_{3}a_{3})$ on $\g^{3}$. Differentiating the identity $dW_{a}(w \cdot a) = 0$, which expresses the invariance of $W$, at $a = e$ in the direction $b$ and using $dW_{e} = 0$ gives $\hess W_{e}(w \cdot e, b) = 0$. The vectors $w \cdot e$, $w \in \com^{3}$, span $\Usp_{e}$, which proves the last claim.
\end{proof}

\subsection{Proof of Theorems \ref{thm:A} and \ref{thm:C}}\label{sec:sharp}
The proofs of Theorems \ref{thm:A} and \ref{thm:C} consist of a second order estimate (Lemma \ref{lem:secondorder}) and a rigidity argument (Lemma \ref{lem:rigidity}). Besides Lemma \ref{fkwmaxlemma} and the results of Sections \ref{sec:nahmprelim}--\ref{sec:hessian}, the proof of Theorem \ref{thm:A} uses only Lemma \ref{lem:Bh} (in the form $j \geq 1$), the identity \eqref{eq:sl2norm} of Lemma \ref{lem:sl2str}, and the lower bound \eqref{eq:lowerbound}, which rests on Lemmas \ref{lem:Bh} and \ref{lem:lower}\ref{lower2}. In particular it uses neither the integrality of the Dynkin index nor the equality case of Lemma \ref{lem:Bh}, which is needed only to identify the subalgebras of Dynkin index $1$ with the long-root subalgebras.

By Lemma \ref{fkwmaxlemma}\ref{fkw3} and Lemma \ref{thm:idem}\ref{idem1}, $W$ attains its maximum value $\comass{\om}^{2}$ at a $\tau$-twisted idempotent $e$, so that $W(e) = \comass{\om}^{2}$.

\begin{lemma}\label{lem:secondorder}
Let $e\in \g^{3}$ be a $\tau$-twisted idempotent at which $W$ assumes its maximum value $W(e) = \comass{\om}^{2}$.
\begin{enumerate}[label=(\arabic*), ref=(\arabic*)]
\item\label{so1} $\nnorm{\tau\lin(e)b} \leq \tfrac{1}{2}\nnorm{b}$ for all $b \in \Uperp_{e}$; that is, on the orthogonal complement of the $(\com^{\ast})^{3}$-orbit, the twisted multiplication operator of the maximizer $e$ is bounded by $1/2$.
\item\label{so2} For $i \in \{1, 2, 3\}$ and every $v \in \g$ with $H(v, e_{i}) = 0$,
\begin{align}\label{eq:clubs}
\sum_{a \in \{1, 2, 3\} \setminus \{i\}}\norm{[e_{a}, v]}^{2} = \nnorm{e \tmlt \iota_{i}(v)}^{2} \leq \tfrac{1}{4}\norm{v}^{2}.
\end{align}
\end{enumerate}
\end{lemma}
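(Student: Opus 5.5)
Since $e$ is a global maximum of $W$ on the open set $(\g\setminus\{0\})^{3}$, it is a critical point and $\hess W_{e}(b)\leq 0$ for every $b\in\g^{3}$. I would begin by restricting attention to $b \in \Uperp_{e}$. There \eqref{eq:hessR} applies, and since $W(e)>0$ the second order condition becomes the one-sided bound
\begin{align*}
\re H\big(b, \tau\lin(e) b\big) \leq \tfrac12 \nnorm{b}^{2}, \qquad b\in\Uperp_{e}.
\end{align*}
This bound is exactly what needs to be upgraded to a two-sided one.

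For that upgrade I would use Lemma \ref{hess2}: $J\Uperp_{e}=\Uperp_{e}$ and $\tau\lin(e)$ anticommutes with $J$. Since $J$ is an $\re H$-isometry, applying the bound to $Jb$ gives
\begin{align*}
-\re H(b,\tau\lin(e)b) = \re H(Jb,\tau\lin(e)Jb)\leq \tfrac12\nnorm{Jb}^{2} = \tfrac12\nnorm{b}^{2}.
\end{align*}
Hence $|\re H(b,\tau\lin(e)b)|\leq\tfrac12\nnorm{b}^{2}$ on $\Uperp_{e}$. By Lemma \ref{hess2}, $\tau\lin(e)$ is $\re H$-self-adjoint and preserves $\Uperp_{e}$, so its restriction to $\Uperp_{e}$ is a self-adjoint operator on a real inner product space. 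Its spectrum therefore lies in $[-\tfrac12,\tfrac12]$, and its operator norm, which equals the supremum of the quadratic form, is at most $\tfrac12$. This proves \ref{so1}.

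For \ref{so2}, take $v\in\g$ with $H(v,e_{i})=0$ and set $b=\iota_{i}(v)$. All components of $b$ other than the $i$th vanish, so $H(b_{a},e_{a})=0$ for every $a$ and hence $b\in\Uperp_{e}$. Because $\tau$ is an $H$-isometry, $\nnorm{\tau\lin(e)b}=\nnorm{e\tmlt b}$, and \ref{so1} gives $\nnorm{e\tmlt\iota_{i}(v)}^{2}\leq\tfrac14\norm{v}^{2}$. It remains to compute $e\tmlt\iota_{i}(v)$ from \eqref{nahmproduct}. For $i=1$ it is $(0,[v,e_{3}]\cdot(-1)\ldots)$; precisely, it equals $(0,[e_{3},v],[v,e_{2}])$. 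Taking norms gives $\norm{[e_{3},v]}^{2}+\norm{[e_{2},v]}^{2}$, and the cases $i=2,3$ are identical up to cyclic permutation. This yields the equality in \eqref{eq:clubs}.

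The only step requiring care is the symmetrization in the second paragraph. There one has to check that $J$ preserves $\Uperp_{e}$ and reverses the sign of the quadratic form. Both facts are already recorded in Lemma \ref{hess2}, which is also what justifies identifying the norm bound in \ref{so1} with the spectral bound.
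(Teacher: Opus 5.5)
Your proof is correct and follows the paper's argument. The paper cites \eqref{eq:hessJ} to turn the one-sided bound from negative semidefiniteness into $|\re H(b, \tau\lin(e)b)| \leq \tfrac{1}{2}\nnorm{b}^{2}$ on $\Uperp_{e}$; this is exactly your substitution $b \mapsto Jb$ using the anticommutation in Lemma \ref{hess2}. It then concludes from self-adjointness and the computation of $e \tmlt \iota_{i}(v)$ just as you do.
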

\begin{proof}
Since $e$ is a maximum point of $W$, the Hessian $\hess W_{e}$ is negative semidefinite, so by \eqref{eq:hessJ}, $-16W(e)^{2}\nnorm{b}^{2} \leq \hess W_{e}(b) \leq 0$ for $b \in \Uperp_{e}$. By \eqref{eq:hessR} this means $|\re H(b, \tau\lin(e)b)| \leq \tfrac{1}{2}\nnorm{b}^{2}$ on $\Uperp_{e}$, and, because $\tau\lin(e)$ is self-adjoint with respect to $\re H$ and preserves $\Uperp_{e}$, its norm on $\Uperp_{e}$ is at most $\tfrac{1}{2}$.
This shows \ref{so1}.

By \eqref{nahmproduct}, the $i$th component of $e \tmlt \iota_{i}(v)$ is $0$ and its other two components are $\pm[e_{a}, v]$, $a \in \{1, 2, 3\}\setminus\{i\}$, which gives the equality in \eqref{eq:clubs}. Since $H(v, e_{i}) = 0$, $\iota_{i}(v) \in \Uperp_{e}$, and since $\tau$ is an isometry, \ref{so1} gives $\nnorm{e \tmlt \iota_{i}(v)}^{2} = \nnorm{\tau\lin(e)\iota_{i}(v)}^{2} \leq \tfrac{1}{4}\norm{v}^{2}$.
\end{proof}

\begin{lemma}\label{lem:rigidity}
Let $e\in \g^{3}$ be a $\tau$-twisted idempotent at which $W$ assumes its maximum value. Then the span in $\g$ of the components of $e$, $\spn_{\com}\{e_{1}, e_{2}, e_{3}\}$ is a $\tau$-stable subalgebra of $\g$ isomorphic to $\sll(2, \com)$.
\end{lemma}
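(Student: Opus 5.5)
The plan is to show that $V := \spn_{\com}\{e_{1}, e_{2}, e_{3}\}$ is $\tau$-stable. Closure under the bracket then follows automatically, because $e \tmlt e = \tau e$ reads $2[e_{j}, e_{k}] = \tau e_{i}$ for $(i,j,k)$ cyclic, so $[V, V] = \tau V$. Once $\tau V = V$ is known, $V$ is a subalgebra. By Lemma \ref{thm:idem}\ref{idem2} it is three-dimensional, since the $e_{i}$ are pairwise $H$-orthogonal with common norm $n > 0$. It is nonabelian, since $\norm{[e_{j}, e_{k}]} = n/2 \neq 0$, and it is semisimple by Lemma \ref{thm:idem}\ref{idem3}, because it coincides with the subalgebra generated by the components. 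A three-dimensional complex semisimple Lie algebra is $\sll(2, \com)$. So the whole content is the inclusion $\tau e_{i} \in V$, and the only available tool beyond the algebraic identities is the two-sided bound of Lemma \ref{lem:secondorder}.

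To get it, write $\tau e_{1} = 2[e_{2}, e_{3}] = p_{1} + r_{1}$ with $p_{1} \in V$ and $r_{1} \perp_{H} V$, and similarly for $i = 2, 3$. I would first record the identities obtained from invariance of $B$ and \eqref{eq:omegaformulas}:
\begin{align*}
H(\tau e_{i}, e_{a}) = -B(\tau e_{i}, \tau e_{a}) = -\overline{B(e_{i}, e_{a})}, \qquad \norm{\tau e_{i}}^{2} = n^{2}.
\end{align*}
From these the coefficients of $p_{i}$ are expressed through the complex numbers $B(e_{a}, e_{b})$, and $\norm{r_{i}}^{2} = n^{2} - \norm{p_{i}}^{2}$. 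Next I would apply \eqref{eq:clubs} with $i = 1$ and $v = r_{1}$, which is admissible since $H(r_{1}, e_{1}) = 0$. This yields
\begin{align*}
\norm{[e_{2}, r_{1}]}^{2} + \norm{[e_{3}, r_{1}]}^{2} \leq \tfrac{1}{4}\norm{r_{1}}^{2}.
\end{align*}
The brackets $[e_{2}, \tau e_{1}]$ and $[e_{3}, \tau e_{1}]$ are computed with the Jacobi identity from $\tau e_{1} = 2[e_{2}, e_{3}]$. For instance, $[e_{2}, \tau e_{1}] = 2[e_{2}, [e_{2}, e_{3}]]$, and its norm can be rewritten using \eqref{basic4} and the twisted idempotent relations as an $H$-pairing of $e_{3}$ against elements built from the $\tau e_{a}$. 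Meanwhile $[e_{a}, p_{1}]$ is an explicit combination of the $\tau e_{b}$. Comparing the two sides should give a lower bound of the shape
\begin{align*}
\norm{[e_{2}, r_{1}]}^{2} + \norm{[e_{3}, r_{1}]}^{2} \geq \tfrac{1}{4}\norm{r_{1}}^{2} + (\text{nonnegative defect}).
\end{align*}
Here the defect is a sum of squares vanishing only if $r_{1} = 0$. Equality is then forced, the defect vanishes, and $r_{1} = 0$. The same argument applies to $r_{2}$ and $r_{3}$ by cyclic symmetry.

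If this direct route proves awkward, a fallback is to use that equality in \ref{so1} forces eigenvector relations. Since $\tau\lin(e)$ is $\re H$-self-adjoint with norm at most $\tfrac{1}{2}$ on $\Uperp_{e}$, any $b \in \Uperp_{e}$ with $\nnorm{\tau\lin(e)b} = \tfrac{1}{2}\nnorm{b}$ satisfies $(\tau\lin(e))^{2}b = \tfrac{1}{4}b$. I would look for test vectors $b \in \Uperp_{e}$ assembled from $\iota_{a}(\tau e_{b})$ and $\iota_{a}(e_{b})$ for which the ratio can be computed exactly via Jacobi. One candidate is $b = \iota_{1}(r_{2}) \pm \iota_{2}(r_{1})$, or the analogous combinations with $J$. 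The anticommutation with $J$, already used to obtain the lower bound $-\tfrac{1}{2}$, would then turn a one-sided inequality into the needed equality.

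The main obstacle is the middle step: finding the right test vector $v$ (or $b$) and organizing the Jacobi computation so that the excess over $\tfrac{1}{4}\norm{v}^{2}$ is visibly a sum of squares controlling $r_{i}$. The naive choices $v = e_{j}$ give equality identically and carry no information. A full sum over an orthonormal basis of $e_{i}^{\perp}$ produces only a non-sharp dimensional bound. So the real work lies in choosing $v$ and in the Jacobi computation.
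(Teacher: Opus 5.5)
Your reduction is correct: once $\tau V = V$, the rest follows as you say. Your $r_{1}$, the component of $\tau e_{1}$ that is $H$-orthogonal to $V$, is also exactly the right test vector. But the step that carries all the content is left open, and the route you sketch for it cannot close. With $f := \tau e$, \eqref{basic4}, $[f_{j}, f_{k}] = \tfrac{1}{2}e_{i}$, and Jacobi give
\begin{align*}
\norm{[e_{2}, f_{1}]}^{2} = -H\big(f_{1}, [f_{2}, [e_{2}, f_{1}]]\big) = \tfrac{1}{4}\norm{f_{1}}^{2} - H\big(f_{1}, [[f_{2}, e_{2}], f_{1}]\big),
\end{align*}
and the twisted idempotent relations do not control the individual elements $[\tau e_{a}, e_{a}]$. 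The individual norms are genuinely not determined by the algebra. At the idempotent of Lemma \ref{thm:sl2crit}\ref{sl2crit3}, $\norm{[e_{2}, \tau e_{1}]}^{2} = \tfrac{1}{4}n^{2}$. At a $\tau$-twisted idempotent rescaled from a long-root triangle of Lemma \ref{thm:roottriangle}, $[e_{2}, \tau e_{1}] \in \g_{\mu_{2} - \mu_{1}} = 0$. So no Jacobi computation can produce your lower bound for $\norm{[e_{2}, r_{1}]}^{2} + \norm{[e_{3}, r_{1}]}^{2}$. The single instance $i = 1$ of \eqref{eq:clubs} cannot be contradicted that way.

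What the algebra does determine is the sum over all three $a$; this sum rule is the first ingredient your proposal is missing. Jacobi gives $\sum_{a}[e_{a}, f_{a}] = 2\big([e_{1}, [e_{2}, e_{3}]] + [e_{2}, [e_{3}, e_{1}]] + [e_{3}, [e_{1}, e_{2}]]\big) = 0$. Since $\qad(e_{a}) - \qad(f_{a}) = \ad([e_{a}, f_{a}])$, this yields $\sum_{a}\qad(e_{a}) = \sum_{a}\qad(f_{a})$. Evaluating on $f_{1}$ gives $\sum_{a=1}^{3}\norm{[e_{a}, f_{1}]}^{2} = \sum_{a=1}^{3}\norm{[f_{a}, f_{1}]}^{2} = \tfrac{1}{2}n^{2}$. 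Next, $[e_{a}, p_{1}] \in \spn_{\com}\{f_{c}: c \neq a\}$, while $H([e_{a}, r_{1}], f_{c}) = -H(r_{1}, [f_{a}, f_{c}]) = 0$ for $c \neq a$. Hence $[e_{a}, \tau e_{1}] = [e_{a}, p_{1}] + [e_{a}, r_{1}]$ is $H$-orthogonal. A direct computation gives $\sum_{a}\norm{[e_{a}, p_{1}]}^{2} = \tfrac{1}{2}\norm{p_{1}}^{2}$, so $\sum_{a}\norm{[e_{a}, r_{1}]}^{2} = \tfrac{1}{2}\norm{r_{1}}^{2}$. Even with this identity, one instance of \eqref{eq:clubs} is not enough, because it leaves $\norm{[e_{1}, r_{1}]}^{2}$ uncontrolled. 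The second missing ingredient is to use all three instances for the same test vector. Since $r_{1}$ is orthogonal to all three $e_{i}$, not only to $e_{1}$, \eqref{eq:clubs} holds for every $i$. Summing over $i$ gives $2\sum_{a}\norm{[e_{a}, r_{1}]}^{2} \leq \tfrac{3}{4}\norm{r_{1}}^{2}$. Hence $\norm{r_{1}}^{2} \leq \tfrac{3}{4}\norm{r_{1}}^{2}$ and $r_{1} = 0$. This is the paper's proof.
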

\begin{proof}
Write $f := \tau e$. By \eqref{eq:idemnorms} of Lemma \ref{thm:idem}, since $W(e) = \comass{\om}^{2}$,
\begin{align}\label{eq:efnorms}
H(e_{a}, e_{b}) = H(f_{a}, f_{b}) = \frac{\delta_{ab}}{4\comass{\om}^{2}}, \qquad a, b \in \{1, 2, 3\},
\end{align}
so that $\spn_{\com}\{e_{1}, e_{2}, e_{3}\}$ has complex dimension $3$. Because $e \tmlt e = \tau e$ and $\tau$ is an order $2$ automorphism of $(\g^{3}, \tmlt)$ (Lemma \ref{lem:nahmbasic}\ref{nahm3}),
\begin{align}\label{eq:cyclicrelations}
e \tmlt e = f, \qquad f \tmlt f = \tau(e \tmlt e) = e.
\end{align}
By Lemma \ref{lem:nahmbasic}\ref{nahm2} this means
\begin{align}\label{eq:cyclicrelations2}
&[e_{j}, e_{k}] = \tfrac{1}{2}f_{i}, && [f_{j}, f_{k}] = \tfrac{1}{2}e_{i},
\end{align}
for $(i, j, k)$ a cyclic permutation of $(1, 2, 3)$.
By \eqref{eq:cyclicrelations2} and the Jacobi identity,
\begin{align}\label{eq:cyclicrelations3}
\tfrac{1}{2}\sum_{a=1}^{3}[e_{a}, f_{a}] = [e_{1}, [e_{2}, e_{3}]] + [e_{2}, [e_{3}, e_{1}]] + [e_{3}, [e_{1}, e_{2}]] = 0.
\end{align}
Since $\tau e_{a} = f_{a}$ and $\tau f_{a} = e_{a}$, $\qad(e_{a}) = -\ad(f_{a})\ad(e_{a})$ and $\qad(f_{a}) = -\ad(e_{a})\ad(f_{a})$, so
\begin{align}
\qad(e_{a}) - \qad(f_{a}) = \ad(e_{a})\ad(f_{a}) - \ad(f_{a})\ad(e_{a}) = \ad([e_{a}, f_{a}]).
\end{align}
With \eqref{eq:cyclicrelations3} this yields
\begin{align}\label{eq:cyclicrelations4}
\sum_{a=1}^{3}\qad(e_{a}) = \sum_{a=1}^{3}\qad(f_{a}).
\end{align}
Since $[f_{a}, f_{m}] = \pm\tfrac{1}{2}e_{c}$ for $a \neq m$, where $c$ is the index in $\{1, 2, 3\}$ different from $a$ and $m$, by \eqref{eq:cyclicrelations2} and \eqref{eq:efnorms}, evaluating both sides of \eqref{eq:cyclicrelations4} on $f_{m}$ yields
\begin{align}\label{eq:sumrule}
\begin{aligned}
\sum_{a=1}^3\norm{[e_a,f_m]}^2&=\sum_{a=1}^3H(\qad(e_{a})f_{m}, f_{m}) = \sum_{a=1}^3H(\qad(f_{a})f_{m}, f_{m})= \sum_{a=1}^3\norm{[f_a,f_m]}^2= \frac{1}{8\comass{\om}^{2}}.
\end{aligned}
\end{align}
Fix $m$ and define
\begin{align}
v = f_{m} - 4\comass{\om}^{2}\sum_{a = 1}^{3}H(f_{m}, e_{a})e_{a},
\end{align}
so that, by \eqref{eq:efnorms}, $H(v, e_{a}) = 0$ for all $a \in \{1, 2, 3\}$ and $\norm{v}^{2} = \frac{1}{4\comass{\om}^{2}} - 4\comass{\om}^{2}\sum_{a=1}^{3}|H(f_{m}, e_{a})|^{2}$.

For $a \in \{1, 2, 3\}$ there holds
\begin{align}
[e_{a}, f_{m}] = [e_{a}, v] + 4\comass{\om}^{2}\sum_{b \in \{1, 2, 3\}\setminus\{a\}}H(f_{m}, e_{b})[e_{a}, e_{b}],
\end{align}
and by \eqref{eq:cyclicrelations2}, for $b \neq a$, $[e_{a}, e_{b}] = \pm \tfrac{1}{2}f_{c}$, where $c$ is the index with $\{a, b, c\} = \{1, 2, 3\}$; as $b$ runs over the two indices different from $a$, so does $c$. The two summands are orthogonal, because by \eqref{basic4} and \eqref{eq:cyclicrelations2}, $H([e_{a}, v], f_{c}) = -H(v, [f_{a}, f_{c}]) = \mp \tfrac{1}{2}H(v, e_{b}) = 0$. Since $f_{1}, f_{2}, f_{3}$ are orthogonal of squared norm $1/(4\comass{\om}^{2})$ by \eqref{eq:efnorms},
\begin{align}\label{eq:rigidsplit}
\norm{[e_{a}, f_{m}]}^{2} = \norm{[e_{a}, v]}^{2} + \comass{\om}^{2}\sum_{b \in \{1, 2, 3\}\setminus\{a\}}|H(f_{m}, e_{b})|^{2}, \qquad a \in \{1, 2, 3\}.
\end{align}
Summing \eqref{eq:rigidsplit} over $a$ and using \eqref{eq:sumrule} gives
\begin{align}\label{rsp1}
\sum_{a=1}^{3}\norm{[e_{a}, v]}^{2} = \frac{1}{8\comass{\om}^{2}} - 2\comass{\om}^{2}\sum_{a = 1}^{3}|H(f_{m}, e_{a})|^2 = \tfrac{1}{2}\norm{v}^{2}.
\end{align}
On the other hand, summing \eqref{eq:clubs} over $i \in \{1, 2, 3\}$, in which each $a$ occurs twice, gives
\begin{align}\label{rsp2}
2\sum_{a=1}^{3}\norm{[e_{a}, v]}^{2} \leq \tfrac{3}{4}\norm{v}^{2}.
\end{align}
Combining \eqref{rsp1} and \eqref{rsp2} gives
\begin{align}
\norm{v}^{2} \leq \tfrac{3}{4}\norm{v}^{2},
\end{align}
so $v = 0$ and $f_{m} \in \spn_{\com}\{e_{1}, e_{2}, e_{3}\}$. Hence $\spn_{\com}\{e_{1}, e_{2}, e_{3}\}$ is $\tau$-stable, and by \eqref{eq:cyclicrelations2} it contains the brackets $[e_{j}, e_{k}] = \tfrac{1}{2}f_{i}$. It follows that the Lie subalgebra $\subs_{e}$ of $\g$ generated by the components of $e$ equals $\spn_{\com}\{e_{1}, e_{2}, e_{3}\}$ and is $\tau$-stable; it is semisimple by Lemma \ref{thm:idem}\ref{idem3}. Being semisimple of dimension $3$, $\subs_{e}$ is isomorphic to $\sll(2, \com)$.
\end{proof}

\begin{proof}[Proof of Theorems \ref{thm:A} and \ref{thm:C}]
Let $e$ be a $\tau$-twisted idempotent at which $W$ attains its maximum, and let $j(\subs_{e})$ be the Dynkin index of the subalgebra $\subs_{e} = \spn_{\com}\{e_{1}, e_{2}, e_{3}\}$ of Lemma \ref{lem:rigidity}. By \eqref{eq:idemnorms}, $e_{1}, e_{2} \in \subs_{e}$ are nonzero and $H$-orthogonal, and $F(e_{1}, e_{2}) = W(e) = \comass{\om}^{2}$, while by \eqref{eq:sl2norm} applied to $e_{1}, e_{2}$, $F(e_{1}, e_{2}) = 1/(j(\subs_{e})\hv)$, so that $1/(j(\subs_{e})\hv) = \comass{\om}^{2}$. By Lemma \ref{lem:Bh}, $j(\subs_{e}) \geq 1$, while by  \eqref{eq:lowerbound}, $\frac{1}{\hv} \leq\comass{\om}^{2}$, so that
\begin{align}\label{eq:sharpchain}
\frac{1}{\hv} \leq \comass{\om}^{2} = \frac{1}{j(\subs_{e})\hv} \leq \frac{1}{\hv}.
\end{align}
By \eqref{eq:sharpchain}, $\comass{\om}^{2} = 1/\hv$ and $j(\subs_{e}) = 1$. This proves \eqref{eq:main}, by Lemma \ref{fkwmaxlemma}, and its optimality. It also shows that for every $\tau$-twisted idempotent $e$ at which $W$ attains its maximum, the components of $e$ span a $\tau$-stable subalgebra isomorphic to $\sll(2, \com)$ and having Dynkin index $1$.

Theorem \ref{thm:C}\ref{thmC1} is Lemma \ref{thm:idem}\ref{idem2},\ref{idem1}. For Theorem \ref{thm:C}\ref{thmC2}, a nonzero $\tau$-twisted idempotent $e$ satisfies $3/(4H(e, e)) = W(e) \leq \comass{\om}^{2} = 1/\hv$ by \eqref{eq:idemnorms}, with equality if and only if $W$ attains its maximum at $e$, in which case the components of $e$ span a $\tau$-stable subalgebra of Dynkin index $1$, as just shown. Conversely, if the components of $e$ span such a subalgebra $\subs$, then by \eqref{eq:idemnorms} and \eqref{eq:sl2norm} applied to the orthogonal pair $e_{1}, e_{2}$, $W(e) = F(e_{1}, e_{2}) = 1/\hv$.

For the equality case of Theorem \ref{thm:A}, let $p, q \neq 0$ satisfy $F(p, q) = 1/\hv$. Then $[p, q] \neq 0$ and $W(p, q, \tau[p, q]) = F(p, q) = \comass{\om}^{2}$ by Lemma \ref{fkwmaxlemma}\ref{fkw2}, so $(p, q, \tau[p, q]) \in \crtnt(W)$. By Lemma \ref{thm:idem}\ref{idem1}, there is $c \in (\com^{\ast})^{3}$ such that the rescaling $e = c \cdot (p, q, \tau[p, q])$ is a $\tau$-twisted idempotent at which $W$ attains its maximum, so the components of $e$, which are multiples of $p$, $q$, and $\tau[p, q]$, are pairwise orthogonal by \eqref{eq:idemnorms} and span a $\tau$-stable subalgebra $\subs$ of Dynkin index $1$; hence $H(p, q) = 0$ and $p, q \in \subs$. Conversely, if $H(p, q) = 0$ and $p, q$ lie in such a subalgebra, then $F(p, q) = 1/\hv$ by \eqref{eq:sl2norm}.
\end{proof}

By Lemma \ref{fkwmaxlemma}\ref{fkw1}, the value of $\sect$ on a plane equals the value of $F$ on an orthogonal basis of the plane. Hence Theorem \ref{thm:A} and \eqref{eq:sectconstant} give the following description of the maximum of $\sect$.
\begin{corollary}\label{cor:sectmax}
For all $\sigma \in \grass(2, \g)$, $\sect(\sigma)  \leq 1/\hv$, with equality if and only if $\sigma \subset \subs$ for a $\tau$-stable subalgebra $\subs \simeq \sll(2, \com)$ of Dynkin index $1$.
\end{corollary}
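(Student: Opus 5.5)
The plan is to reduce both directions of Corollary \ref{cor:sectmax} to Theorem \ref{thm:A} by passing to an $H$-orthogonal basis of the plane. Let $\sigma \in \grass(2, \g)$. By Gram--Schmidt, $\sigma$ has a basis $x, y$ with $H(x, y) = 0$. By the invariance \eqref{eq:sectinvariance}, $\sect(\sigma) = \sect(x, y)$. Since $H(x, y) = 0$, Lemma \ref{fkwmaxlemma}\ref{fkw1} gives $\sect(x, y) = F(x, y)$. Theorem \ref{thm:A} then gives $F(x, y) \leq 1/\hv$, which proves the inequality.

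For the equality case, first suppose $\sect(\sigma) = 1/\hv$. Then $F(x, y) = 1/\hv$ for the orthogonal basis chosen above. The equality statement of Theorem \ref{thm:A} provides a $\tau$-stable subalgebra $\subs \simeq \sll(2, \com)$ of Dynkin index $1$ containing both $x$ and $y$. Since $x, y$ span $\sigma$, it follows that $\sigma \subset \subs$.

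Conversely, suppose $\sigma \subset \subs$ with $\subs$ of this type. Then \eqref{eq:sectconstant} with $j = 1$ gives $\sect(\sigma) = 1/\hv$ directly. Alternatively, apply \eqref{eq:sl2norm} with $\beta = \hv$ to an orthogonal basis of $\sigma$.

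There is no real obstacle here. The only point needing care is that the equality characterization in Theorem \ref{thm:A} is stated for pairs satisfying $H(x, y) = 0$, so the orthogonal basis must be fixed before invoking it. This is automatic because every complex $2$-plane admits an $H$-orthogonal basis and $\sect$ depends only on the plane.
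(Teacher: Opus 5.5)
Your proposal is correct and follows the paper's own argument: pass to an $H$-orthogonal basis so that $\sect = F$ by Lemma \ref{fkwmaxlemma}\ref{fkw1}, then apply Theorem \ref{thm:A} for the bound and its equality case, and use \eqref{eq:sectconstant} with $j = 1$ for the converse.
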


For pairs in the compact real form, ``contained in'' in Theorem \ref{thm:A} can be replaced by ``generate''.
\begin{corollary}\label{cor:compactequality}
For $x, y \in \gz\setminus\{0\}$, $F(x, y) = 1/\hv$ if and only if $H(x, y) = 0$ and $x$ and $y$ generate a subalgebra $\subs_{0} \subset \gz$ isomorphic to $\su(2)$ whose complexification has Dynkin index $1$.
\end{corollary}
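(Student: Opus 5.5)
Both directions can be obtained from Theorem \ref{thm:A} together with Lemma \ref{lem:sl2str} and Lemma \ref{lem:lower}\ref{lower3}.

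For the forward direction, let $x, y \in \gz\setminus\{0\}$ with $F(x, y) = 1/\hv$. By Theorem \ref{thm:A}, $H(x, y) = 0$ and $x, y \in \subs$ for some $\tau$-stable $\subs \simeq \sll(2, \com)$ of Dynkin index $1$. Since $x, y \in \gz$, both lie in $\subs_{0} = \subs \cap \gz$, which is a compact real form of $\subs$ isomorphic to $\su(2)$ (Lemma \ref{lem:sl2str}\ref{sl2a}), and whose complexification $\subs$ has Dynkin index $1$.

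It remains to show that $x$ and $y$ generate all of $\subs_{0}$. In the basis $z_{1}, z_{2}, z_{3}$ of \eqref{eq:xirelations}, the bracket on $\subs_{0}$ is the real cross product and $-B$ is $\beta$ times the Euclidean inner product. The vectors $x$ and $y$ are nonzero and orthogonal, because $H(x, y) = -B(x, y)$ on $\gz$. Hence they are linearly independent, $[x, y]$ is a nonzero vector orthogonal to both, and $x, y, [x, y]$ span $\subs_{0}$. So the subalgebra generated by $x$ and $y$ is $\subs_{0}$.

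For the converse, suppose $H(x, y) = 0$ and $x, y$ generate a subalgebra $\subs_{0} \simeq \su(2)$ of $\gz$ whose complexification $\subs = \subs_{0} \oplus i\subs_{0}$ has Dynkin index $1$. Then $\subs$ is $\tau$-stable, since $\tau$ fixes $\subs_{0}$ and acts conjugate-linearly. It is isomorphic to $\sll(2, \com)$, and it contains $x$ and $y$. By \eqref{eq:sl2norm} with $\beta = \hv$ and $H(x, y) = 0$, we get $F(x, y) = 1/\hv$. Equivalently, this direction is the converse part of Theorem \ref{thm:A}.

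The only step with any content is the generation claim, which uses the cross-product model of $\su(2)$ from Lemma \ref{lem:sl2str}. Everything else is routine. As a cross-check, Corollary \ref{thm:compact}\ref{compact2} gives the same picture: up to $\Ad(G_{0})$-conjugacy, $x = cs_{\theta}$ and $y \in \gz \cap (\g_{\theta} \oplus \g_{-\theta})$, and these clearly generate $\subs_{\theta} \cap \gz$.
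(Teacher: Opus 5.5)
Your proposal is correct and follows essentially the same route as the paper. Both arguments get $H(x,y)=0$ and $x,y\in\subs\cap\gz$ from Theorem~\ref{thm:A}, use the real cross-product model in the basis of Lemma~\ref{lem:sl2str}\ref{sl2a} to show that $x$, $y$, $[x,y]$ form a basis of $\subs_{0}$, and deduce the converse from \eqref{eq:sl2norm} with $\beta=\hv$.
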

\begin{proof}
If $F(x, y) = 1/\hv$, Theorem \ref{thm:A} gives $H(x, y) = 0$ and $x, y \in \subs$ for a $\tau$-stable $\subs$ of Dynkin index $1$, so $x, y \in \subs_{0} = \subs \cap \gz \simeq \su(2)$. In the basis of Lemma \ref{lem:sl2str}\ref{sl2a}, $x$ and $y$ have real, nonzero, orthogonal coefficient vectors, so $x$, $y$, $[x, y]$ is a basis of $\subs_{0}$, and $x, y$ generate $\subs_{0}$. The converse follows from \eqref{eq:sl2norm}.
\end{proof}

\subsection{Hermitian metrized commutative algebras}\label{sec:hermitianmetrized}
This subsection situates Lemmas \ref{thm:idem} and \ref{thm:hessW} in the general context of metrized commutative algebras; it is not used elsewhere in the paper.

For a commutative real algebra $(\alg, \mlt)$ with an invariant inner product $\langle\dum, \dum\rangle$, the critical points of the cubic form $\langle x \mlt x, x\rangle$ on the unit sphere correspond to idempotents, and the Hessian at an idempotent $e$ is a multiple of $\lin(e) - \tfrac{1}{2}$ on the tangent space to the sphere; see \cite[Lemma $2.3$]{Tkachev-correction} and \cite{Fox-trace, Fox-Tkachev, Tkachev-universality}. As is explained below, Lemmas \ref{thm:idem} and \ref{thm:hessW} are the specialization to Nahm algebras of Hermitian analogues of these observations, in which idempotents are replaced by $\tau$-twisted idempotents, $\lin(e)$ is replaced by the $\tau$-twisted operator $\tau\lin(e)$, and the sphere replaced by the complement of the $(\com^{\ast})^{3}$-orbit. 

The new feature is that $\tau\lin(e)$ anticommutes with $J$, so its spectrum on $\Uperp_{e}$ is symmetric about $0$, and the one-sided bound $\tau\lin(e) \leq \tfrac{1}{2}$ that holds on $\Uperp_{e}$ at a maximum of $W$ implies the two-sided bound $\nnorm{\tau\lin(e)b} \leq \tfrac{1}{2}\nnorm{b}$. Equivalently, the identity \eqref{eq:hessJ} of Lemma \ref{thm:hessW} holds at every $\tau$-twisted idempotent $e$, whether or not $e$ is a maximum point, and at a maximum point both summands in \eqref{eq:hessJ} are nonpositive, so each lies between $-16W(e)^{2}\nnorm{b}^{2}$ and $0$. This two-sided bound is the key step in the proof of Theorem \ref{thm:A}, and it has a conceptual interpretation. The cubic form $C$ is a section of the line bundle $\mathcal{O}(1, 1, 1) = \bigotimes_{i=1}^{3}\pi_{i}^{\ast}\mathcal{O}(1)$ on $\proj(\g)^{3}$, where $\pi_{i}$ is the projection on the $i$th factor, and $W$ is its squared norm for the Hermitian metric induced by $H$. Since $\log|C|^{2}$ is pluriharmonic where $C \neq 0$, on this open set $i\partial\bar\partial(-\log W) = \sum_{i=1}^{3}\pi_{i}^{\ast}\omega_{\mathrm{FS}}$, where $\omega_{\mathrm{FS}}$ is the Fubini--Study form of $(\g, H)$, so $-\log W$ is strictly plurisubharmonic. At a critical point $e$ with $W(e) \neq 0$, $\hess(-\log W)_{e}(b) + \hess(-\log W)_{e}(Jb)$ is a positive multiple of $(i\partial\bar\partial(-\log W))(b, Jb) > 0$, and this is the identity \eqref{eq:hessJ}: the two-sided bound is the positivity of the curvature of $\mathcal{O}(1, 1, 1)$.

The function $W$ is natural from the point of view of giving a Lie theoretic interpretation of the Böttcher--Wenzel inequality, but from the point of view of commutative nonassociative algebras such as the Nahm algebra, a different but closely related function $V$ is more natural and makes sense in greater generality. Although not strictly necessary for this paper, this is described briefly here because it may be of interest in other contexts.

Let $(\alg, \mlt)$ be a finite-dimensional commutative complex algebra, let $B$ be a nondegenerate symmetric complex bilinear form on $\alg$ that is invariant, $B(x \mlt y, z) = B(x, y \mlt z)$ for $x, y, z \in \alg$, and let $\tau$ be a conjugation of $(\alg, \mlt)$ (Definition \ref{def:twisted}) such that $B(\tau x, \tau y) = \overline{B(x, y)}$ and $H(x, y) := -B(x, \tau y)$ is positive definite. Call $(\alg, \mlt, B, \tau)$ a \emph{Hermitian metrized commutative algebra}, write $\nnorm{x}^{2} = H(x, x)$, and define
\begin{align}\label{eq:VA}
&C_{\alg}(a) := B(a \mlt a, a),& &V_{\alg}(a) := \frac{|C_{\alg}(a)|}{\nnorm{a}^{3}},& &a \in \alg \setminus \{0\}.&
\end{align}
Suppose $C_{\alg}$ is not identically zero; by polarization and the nondegeneracy of $B$, this means $\alg \mlt \alg \neq \{0\}$. Since $d(C_{\alg})_{a}(b) = 3B(a \mlt a, b) = -3H(b, \tau(a \mlt a))$, the computation in the proof of Lemma \ref{lem:critW} shows that an element $a$ with $C_{\alg}(a) \neq 0$ is a critical point of $V_{\alg}$ if and only if
\begin{align}\label{eq:VAcrit}
a \mlt a = -\frac{C_{\alg}(a)}{\nnorm{a}^{2}}\,\tau a.
\end{align}
Conversely, if $a \mlt a \in \com^{\ast}\tau a$, then pairing with $a$ using $B$ shows that the scalar is $-C_{\alg}(a)/\nnorm{a}^{2}$, so $a$ is critical. Since $(za) \mlt (za) = z^{2}(a \mlt a)$ and $\tau(za) = \bar{z}\tau a$, a suitable choice of $z \in \com^{\ast}$ makes $za$ a $\tau$-twisted idempotent, so the critical points of $V_{\alg}$ at which $V_{\alg}$ is nonzero are exactly the elements of $\com^{\ast}\cdot\idem_{\tau}(\alg)$. In contrast with Lemma \ref{thm:idem}\ref{idem1}, only a single complex scalar is involved. For $e \in \idem_{\tau}(\alg)$, $C_{\alg}(e) = B(\tau e, e) = -\nnorm{e}^{2}$, so $V_{\alg}(e) = 1/\nnorm{e}$, and the maximum value of $V_{\alg}$ is $1/\min\{\nnorm{e}: e \in \idem_{\tau}(\alg)\}$. Moreover, for $e \in \idem_{\tau}(\alg)$ and $b \in \alg$ with $H(b, e) = 0$, the expansion used in the proof of Lemma \ref{thm:hessW} gives
\begin{align}\label{eq:hessVA}
\hess(V_{\alg}^{2})_{e}(b) = \frac{12}{\nnorm{e}^{4}}\re H\big(b, (\tau\lin(e) - \tfrac{1}{2})b\big),
\end{align}
where $\lin(e)b := e \mlt b$. The operator $\tau\lin(e)$ is self-adjoint with respect to $\re H$, anticommutes with multiplication by $i$, has $e$ and $ie$ as eigenvectors with eigenvalues $1$ and $-1$, and therefore preserves the complex orthogonal complement of $e$. Hence the argument of Lemma \ref{lem:secondorder}\ref{so1} applies verbatim to show: if $V_{\alg}$ attains its maximum at $e \in \idem_{\tau}(\alg)$, then $\nnorm{\tau\lin(e)b} \leq \tfrac{1}{2}\nnorm{b}$ for all $b$ with $H(b, e) = 0$. By \eqref{eq:hessVA}, this is the Hermitian counterpart of the situation for metrized commutative algebras recalled at the beginning of this subsection: the Hessian at a $\tau$-twisted idempotent is governed by $\tau\lin(e) - \tfrac{1}{2}$. The new feature is the two-sided bound forced by the anticommutation with $i$.

The Nahm algebra $(\g^{3}, \tmlt, B, \tau)$ has a special feature not possessed by a general Hermitian metrized commutative algebra, which is that it decomposes as a direct sum of three equidimensional mutually orthogonal subalgebras each of which squares to $\{0\}$, namely the copies of $\g$ in the direct product $\g \times \g \times \g$. This decomposition is what makes the function $W$ meaningful and what gives rise to the three independent rescalings. 

For the Nahm algebra, $C_{\g^{3}} = 6C$ by \eqref{eq:cubicpolar}, and the arithmetic-geometric mean inequality applied to the norms of the components gives
\begin{align}\label{eq:VWcompare}
V_{\g^{3}}(a)^{2} = 36\,W(a)\,\frac{\prod_{i=1}^{3}\norm{a_{i}}^{2}}{\nnorm{a}^{6}} \leq \tfrac{4}{3}W(a),
\end{align}
with equality if and only if $\norm{a_{1}} = \norm{a_{2}} = \norm{a_{3}}$. The components of a $\tau$-twisted idempotent have equal norms by \eqref{eq:idemnorms}, and the elements $\zeta \in (\com^{\ast})^{3}$ with $|\zeta_{1}| = |\zeta_{2}| = |\zeta_{3}| = 1$ and $\zeta_{1}\zeta_{2}\zeta_{3} = 1$ map $\idem_{\tau}(\g^{3}, \tmlt)$ to itself. Hence the critical points of $V_{\g^{3}}$ at which it is nonzero are the critical points of $W$ whose components have equal norms, and, by \eqref{eq:VWcompare}, the maximum value of $V_{\g^{3}}^{2}$ is $\tfrac{4}{3}\comass{\om}^{2} = 4/(3\hv)$. On $\Uperp_{e}$ the form \eqref{eq:hessVA} is a positive multiple of \eqref{eq:hessR}. On the remaining four directions $\Usp_{e} \cap \{b: H(b, e) = 0\}$, $\tau\lin(e)$ has eigenvalues $\pm\tfrac{1}{2}$, each with multiplicity $2$ (Lemma \ref{hess2}): the eigenvectors with eigenvalue $\tfrac{1}{2}$ are null directions of \eqref{eq:hessVA}, tangent to the action of the phases $\zeta$ just described, and those with eigenvalue $-\tfrac{1}{2}$ are negative directions, along which the components are rescaled by positive reals with product $1$, so that $W$ is constant and $V_{\g^{3}}$ decreases by \eqref{eq:VWcompare}. Thus $W$ is the function adapted to the $(\com^{\ast})^{3}$-symmetry of the commutator problem, while $V_{\g^{3}}$ is the function intrinsic to the Hermitian metrized commutative algebra $(\g^{3}, \tmlt, B, \tau)$.

\section{Invariant norms, representations, matrix inequalities, and curvature}\label{sec:reductive}

This section shows how the results here recover the Böttcher--Wenzel and Bloch--Iserles inequalities, and interprets Theorem \ref{thm:A} in terms of the complex sectional curvature of compact simple Lie groups and compact symmetric spaces.

\subsection{Invariant Hermitian forms}
Let $\g$ be a complex reductive Lie algebra, $\g = \mathfrak{z}\oplus\subs_{1}\oplus\dots\oplus\subs_{r}$ with $\mathfrak{z}$ the center and $\subs_{k}$ simple ideals. A \emph{compact real form} of $\g$ is a real subalgebra $\gz$ with $\g = \gz \oplus i\gz$ admitting an $\ad(\gz)$-invariant inner product; its compact conjugation $\tau$ is defined as before. A Hermitian form $\HH$ on $\g$ is \emph{$\gz$-invariant} if
\begin{align}\label{eq:invariance}
\HH([z, x], y) + \HH(x, [z, y]) = 0 \qquad \text{for all } z \in \gz \text{ and } x, y \in \g.
\end{align}
Every $\Ad(K)$-invariant Hermitian form, for $K$ a connected compact group with Lie algebra $\gz$, is $\gz$-invariant. For $\HH$ positive definite write $F_{\HH}(x, y) := \HH([x, y], [x, y])/(\HH(x, x)\HH(y, y))$.

\begin{lemma}\label{lem:schur}
\leavevmode\noindent
\begin{enumerate}[label=(\arabic*), ref=(\arabic*)]
\item\label{schur1} If $\g$ is simple, every positive definite $\gz$-invariant Hermitian form is $\HH = \kappa H$ for a unique $\kappa > 0$, and then $F_{\HH} = \kappa^{-1}F$.
\item\label{schur2} If $\g$ is reductive, the ideals $\mathfrak{z}, \subs_{1}, \dots, \subs_{r}$ are $\tau$-stable and pairwise $\HH$-orthogonal for every positive definite $\gz$-invariant $\HH$, $\kl_{k} := \subs_{k} \cap \gz$ is a compact real form of $\subs_{k}$, and $\HH|_{\subs_{k}} = \kappa_{k}H_{k}$ with $\kappa_{k} > 0$, where $H_{k}$ is the form \eqref{eq:Hdef} of $\subs_{k}$.
\end{enumerate}
\end{lemma}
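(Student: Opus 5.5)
The plan is a Schur-lemma argument: first treat the simple case and then reduce the reductive case to it ideal by ideal.

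For \ref{schur1}, take a positive definite $\gz$-invariant Hermitian form $\HH$. Because $H$ is nondegenerate, there is a unique complex linear operator $T$ on $\g$ with $\HH(x,y) = H(Tx,y)$, and $T$ is $H$-Hermitian and positive definite. Both $\HH$ and $H$ are $\gz$-invariant; for $H$ this follows from \eqref{basic4}, since $\ad(z)^{\dagger} = -\ad(z)$ for $z \in \gz$. Comparing the two invariance identities gives $H(T[z,x],y) = H([z,Tx],y)$, so $T$ commutes with $\ad(z)$ for every $z \in \gz$.

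Since $T$ is complex linear and $\g = \gz \oplus i\gz$, it then commutes with all of $\ad(\g)$. The algebra $\g$ is simple, so the adjoint representation is irreducible, and Schur's lemma gives $T = \kappa\,\id$. Positivity forces $\kappa > 0$, and uniqueness is clear. The formula $F_{\HH} = \kappa^{-1}F$ then follows by substitution: the numerator of $F_{\HH}$ scales by $\kappa$ and the denominator by $\kappa^{2}$.

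For \ref{schur2}, I first show that $\tau$ preserves each simple ideal. The map $\tau$ is a conjugate-linear automorphism, so it permutes the simple ideals and preserves the center. To rule out two distinct ideals being exchanged, use the $\ad(\gz)$-invariant inner product on $\gz$.
\begin{itemize}
\item Suppose $\tau\subs_k = \subs_l$ with $l \neq k$.
\item Then $\gz$ meets $\subs_k \oplus \subs_l$ in $\{x+\tau x : x \in \subs_k\}$, which is isomorphic to $\subs_k$ regarded as a real Lie algebra.
\item A complex simple Lie algebra regarded as a real algebra has a noncompact Killing form; for instance $i\mathfrak{k}$ contributes positive directions. It therefore cannot admit an invariant inner product, a contradiction.
\end{itemize}
A cleaner route may be the one in the paper's setting, where $\gz$ is compact and $\gz = \mathfrak{z}_0 \oplus \bigoplus_k \mathfrak{k}_k$ with $\mathfrak{k}_k = \subs_k \cap \gz$, but either argument will do.

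The second half of \ref{schur2} repeats the argument from \ref{schur1}. Define $T$ as before; it commutes with $\ad(\g)$, so it preserves each isotypic component. The simple ideals are pairwise nonisomorphic as $\g$-modules, since different $\subs_k$ act nontrivially on different ideals. They are also distinct from the trivial module $\mathfrak{z}$. Hence $T$ preserves $\mathfrak{z}$ and each $\subs_k$, and acts on $\subs_k$ as a scalar $\kappa_k > 0$. This scalar relation holds with respect to $H$ restricted to $\subs_k$. One checks that the Killing form of $\g$ restricts to that of $\subs_k$, because $\ad$ of an element of $\subs_k$ kills the other ideals. Consequently $H|_{\subs_k} = H_k$. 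Here $\tau$ preserves $\subs_k$, so $\mathfrak{k}_k$ is a real form on which $-B_k$ is positive, hence compact.

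Orthogonality follows because $T$ is $H$-self-adjoint and preserves the decomposition, which is $H$-orthogonal by Killing-form orthogonality of the ideals and $\tau$-stability. The main obstacle I expect is the $\tau$-stability of the ideals. If the argument above is awkward, I would instead deduce it from the fact that the ideals of $\gz$ complexify to the $\subs_k$.
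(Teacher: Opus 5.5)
Your part (1) matches the paper's argument; you use Schur's lemma where the paper says ``the eigenspaces of $T$ are ideals''. Your exclusion of a swap $\tau\subs_{k} = \subs_{l}$, $l \neq k$, is a valid route to $\tau$-stability that differs from the paper's. The paper quotes the decomposition of the compact algebra $\gz$ into its center and compact simple ideals with simple complexifications, then uses uniqueness of the decomposition of $\g$. You instead prove directly that a complex simple Lie algebra, viewed as a real Lie algebra, carries no invariant inner product.

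\textbf{The gap is in the second half of (2).} ``Define $T$ as before'' means $\HH(x, y) = H(Tx, y)$ with $H = -B(\dum, \tau\dum)$ built from the Killing form of $\g$. When $\mathfrak{z} \neq 0$ this $H$ vanishes on $\mathfrak{z} \times \g$, so it is degenerate and no such $T$ exists: $\HH(x, x) > 0$ for $0 \neq x \in \mathfrak{z}$, while $H(Tx, x) = 0$. Consequently the $\HH$-orthogonality of $\mathfrak{z}$ to the $\subs_{k}$ is not proved, since $H$-orthogonality to $\mathfrak{z}$ is vacuous and says nothing about $\HH$. This case matters: the lemma is applied to $\gl(n, \com) = \com I \oplus \sll(n, \com)$.

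\textbf{How to repair it within your approach.} Use as reference a positive definite $\gz$-invariant Hermitian form on all of $\g$. One choice is $H$ on $[\g, \g]$ plus any Hermitian inner product on $\mathfrak{z}$, with $\mathfrak{z}$ declared orthogonal to $[\g, \g]$; this is positive definite once you have $\tau$-stability, and it is invariant because $\ad(\gz)$ kills $\mathfrak{z}$. Another choice is the Hermitian extension of the invariant inner product on $\gz$. With either choice your isotypic-component argument goes through verbatim.

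\textbf{How the paper avoids this.} The paper uses no reference form for this step. Since $\kl_{k} = [\kl_{k}, \kl_{k}]$, each $x \in \subs_{k}$ is a sum of brackets $[z, x']$ with $z \in \kl_{k}$ and $x' \in \subs_{k}$. Invariance then gives $\HH([z, x'], y) = -\HH(x', [z, y]) = 0$ for $y \in \mathfrak{z}$ or $y \in \subs_{l}$, $l \neq k$. Part (1) applied to $\subs_{k}$ then yields $\kappa_{k}$.

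\textbf{A smaller point.} Your claim that $-B_{k}$ is positive on $\kl_{k}$ needs a line of justification. The invariant inner product on $\gz$ makes each $\ad(x)$, $x \in \kl_{k}$, skew, so $B_{k}(x, x) \leq 0$. Nondegeneracy of $B_{k}$ then gives definiteness.
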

\begin{proof}
\ref{schur1} Write $\HH(x, y) = H(Sx, y)$ with $S$ positive definite and $H$-Hermitian. By \eqref{eq:invariance} for $\HH$ and for $H$ (\eqref{basic4}), $S$ commutes with $\ad(z)$ for $z \in \gz$, hence, being complex linear, with $\ad(\g)$. Its eigenspaces are then ideals, so $S = \kappa\id_{\g}$.

\ref{schur2} A compact Lie algebra is the direct sum of its center and compact simple ideals, whose complexifications are simple, so by uniqueness of the decomposition of $\g$ these complexifications are the $\subs_{k}$, which are therefore $\tau$-stable. Because $\kl_{k} = [\kl_{k}, \kl_{k}]$, every $x \in \subs_{k}$ is a sum of brackets $[z, x']$ with $z \in \kl_{k}$, $x' \in \subs_{k}$, and \eqref{eq:invariance} gives $\HH(x, y) = 0$ whenever $y$ lies in $\mathfrak{z}$ or in $\subs_{l}$ with $l \neq k$. The last claim is \ref{schur1} applied to $\subs_{k}$.
\end{proof}

\begin{corollary}\label{thm:reductive}
Let $\g$ be complex reductive and non-abelian, with compact real form $\gz$, and let $\HH$ be a positive definite $\gz$-invariant Hermitian form, with $\kappa_{k}$ as in Lemma \ref{lem:schur}\ref{schur2}. Let $h_{k}^{\vee}$ be the dual Coxeter number of $\subs_{k}$ and $\lambda^{\ast} := \max_{1 \leq k \leq r}(\kappa_{k}h_{k}^{\vee})^{-1}$. Then
\begin{align}\label{eq:redbound}
\HH([x, y], [x, y]) \leq \lambda^{\ast}\,\HH(x, x)\,\HH(y, y) \qquad \text{for all } x, y \in \g,
\end{align}
the constant is optimal, and for $x, y \neq 0$ equality holds if and only if $x$ and $y$ lie in a single ideal $\subs_{k}$ with $(\kappa_{k}h_{k}^{\vee})^{-1} = \lambda^{\ast}$ and satisfy there the equality condition of Theorem \ref{thm:A}.
\end{corollary}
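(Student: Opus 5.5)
The plan is to decompose $x$ and $y$ along the ideals and reduce the inequality to Theorem \ref{thm:A} applied inside each simple ideal, with the center contributing nothing to brackets. Write $x = x_{0} + \sum_{k} x_{k}$ and $y = y_{0} + \sum_{k} y_{k}$ with $x_{0}, y_{0} \in \mathfrak{z}$ and $x_{k}, y_{k} \in \subs_{k}$. Since the ideals commute and $\mathfrak{z}$ is central, $[x, y] = \sum_{k}[x_{k}, y_{k}]$ with $[x_{k}, y_{k}] \in \subs_{k}$.

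By Lemma \ref{lem:schur}\ref{schur2} the summands are pairwise $\HH$-orthogonal, so
\begin{align*}
\HH([x, y], [x, y]) = \sum_{k}\kappa_{k}\norm{[x_{k}, y_{k}]}_{k}^{2},
\end{align*}
where $\norm{\dum}_{k}$ is the norm of $H_{k}$. Theorem \ref{thm:A} applied to the simple algebra $\subs_{k}$, with compact real form $\kl_{k}$ and conjugation $\tau|_{\subs_{k}}$, gives $\norm{[x_{k}, y_{k}]}_{k}^{2} \leq (h_{k}^{\vee})^{-1}\norm{x_{k}}_{k}^{2}\norm{y_{k}}_{k}^{2}$. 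Setting $X_{k} := \kappa_{k}\norm{x_{k}}_{k}^{2}$ and $Y_{k} := \kappa_{k}\norm{y_{k}}_{k}^{2}$, this yields
\begin{align*}
\HH([x, y], [x, y]) \leq \sum_{k}(\kappa_{k}h_{k}^{\vee})^{-1}X_{k}Y_{k} \leq \lambda^{\ast}\sum_{k}X_{k}Y_{k} \leq \lambda^{\ast}\Big(\sum_{k}X_{k}\Big)\Big(\sum_{l}Y_{l}\Big) \leq \lambda^{\ast}\HH(x, x)\HH(y, y).
\end{align*}
The last step uses $\HH(x, x) = \HH(x_{0}, x_{0}) + \sum_{k} X_{k}$, which again follows from orthogonality. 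Optimality follows by taking an equality pair of Theorem \ref{thm:A} inside an ideal $\subs_{k}$ attaining $\lambda^{\ast}$; such an ideal exists since $r \geq 1$, because $\g$ is non-abelian.

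The main care is the equality analysis; I would track each inequality in the chain in turn.
\begin{enumerate}[label=(\roman*)]
\item Equality in the last step forces $x_{0} = 0 = y_{0}$, provided the left side is nonzero. Equality in the preceding step requires $X_{k}Y_{l} = 0$ for $k \neq l$.
\item Since $x, y \neq 0$ and the left side equals $\lambda^{\ast}\HH(x, x)\HH(y, y) > 0$, both $x$ and $y$ are supported in the ideals. Together with the vanishing of the cross terms $X_{k}Y_{l}$, this forces $x, y \in \subs_{k}$ for a single $k$.
\item Equality in the middle step then forces $(\kappa_{k}h_{k}^{\vee})^{-1} = \lambda^{\ast}$, and equality in the first step is exactly the equality case of Theorem \ref{thm:A} in $\subs_{k}$.
\end{enumerate}
The converse direction is immediate from the same computation.
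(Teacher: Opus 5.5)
Your proposal is correct and follows the paper's own proof: you decompose along the center and the simple ideals, use the orthogonality from Lemma \ref{lem:schur}, apply Theorem \ref{thm:A} in each ideal, and then trace equality back through the resulting chain of inequalities. The only difference is cosmetic: the paper folds the central components into the sums (with $a_{k} = \HH(x_{k}, x_{k})$ for $0 \leq k \leq r$), so your last two inequalities become a single step.
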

\begin{proof}
Write $x = x_{0} + \sum_{k=1}^{r}x_{k}$ and $y = y_{0} + \sum_{k=1}^{r}y_{k}$ according to the decomposition of $\g$, and put $a_{k} = \HH(x_{k}, x_{k})$ and $b_{k} = \HH(y_{k}, y_{k})$ for $0 \leq k \leq r$. Then $[x, y] = \sum_{k=1}^{r}[x_{k}, y_{k}]$, and by Lemma \ref{lem:schur} and Theorem \ref{thm:A} applied in each $\subs_{k}$,
\begin{align}\label{eq:chain}
\HH([x, y], [x, y]) = \sum_{k=1}^{r}\HH([x_{k}, y_{k}], [x_{k}, y_{k}]) \leq \sum_{k=1}^{r}\frac{a_{k}b_{k}}{\kappa_{k}h_{k}^{\vee}} \leq \lambda^{\ast}\sum_{k=1}^{r}a_{k}b_{k} \leq \lambda^{\ast}\Big(\sum_{k=0}^{r}a_{k}\Big)\Big(\sum_{k=0}^{r}b_{k}\Big),
\end{align}
which is \eqref{eq:redbound}. Equality is attained by an extremal pair of Theorem \ref{thm:A} in an ideal realizing $\lambda^{\ast}$. If equality holds with $x, y \neq 0$, then some $a_{k}b_{k} > 0$, and equality in the last step of \eqref{eq:chain} forces all the other components of $x$ and $y$ to vanish; the remaining equalities then give the stated condition.
\end{proof}

\subsection{Representations}
Let $\g$ be simple and $\rho: \g \to \gl(V)$ a nonzero representation, with Dynkin index $\ell_{\rho}$ (Definition \ref{def:dynkin}), so that \eqref{eq:repindex} holds by Lemma \ref{lem:dynkinformula}\ref{dynkinformula1}. The Dynkin index is additive under direct sums \cite[(2.24)]{Dynkin-semisimplesubalgebras}, and $\ell_{\mathrm{ad}} = 2\hv$ by \eqref{eq:Bnorm}. Let $\Lambda(\rho) \subset \h^{\ast}$ be the set of weights of $\rho$ and $m_{\mu}$ the multiplicity of $\mu \in \Lambda(\rho)$. Evaluating \eqref{eq:repindex} on $x = y = \theta^{\vee}$ and using $(\theta^{\vee}, \theta^{\vee}) = 2$ gives
\begin{align}\label{eq:indexweights}
\ell_{\rho} = \frac{1}{2}\sum_{\mu \in \Lambda(\rho)}m_{\mu}\,\mu(\theta^{\vee})^{2}.
\end{align}

\begin{proof}[Proof of Theorem \ref{thm:B}]
Since the Hermitian inner product on $V$ is $\gz$-invariant, $\rho(\gz)$ consists of skew-Hermitian operators, so $\rho(x)^{\dagger} = -\rho(\tau x)$ for $x \in \g$. By \eqref{eq:repindex}, \eqref{eq:Bnorm}, and \eqref{eq:Hdef},
\begin{align}\label{eq:frobeniusH}
\tr \rho(x)\rho(y)^{\dagger} = -\ell_{\rho}(x, \tau y) = \frac{\ell_{\rho}}{2\hv}H(x, y).
\end{align}
By \eqref{eq:frobeniusH},
\begin{align}\label{eq:fh1}
&\fnorm{\rho(x)}^{2} = \tr \rho(x)\rho(x)^{\dagger} = \frac{\ell_{\rho}}{2\hv}\norm{x}^{2},&&\fnorm{\rho([x, y])}^{2} = \tr \rho([x, y])\rho([x, y])^{\dagger} = \frac{\ell_{\rho}}{2\hv}\norm{[x, y]}^{2}.& 
\end{align}
By \eqref{eq:fh1}, since $\rho$ is injective and $[\rho(x), \rho(y)] = \rho([x, y])$, for $x, y \in \g\setminus\{0\}$
\begin{align}\label{eq:frobeniusratio}
\begin{aligned}
\frac{\fnorm{[\rho(x), \rho(y)]}^{2}}{\fnorm{\rho(x)}^{2}\fnorm{\rho(y)}^{2}} = \frac{2\hv}{\ell_{\rho}}F(x, y),
\end{aligned}
\end{align}
and Theorem \ref{thm:B}, with the optimal constant $(2\hv/\ell_{\rho})(1/\hv) = 2/\ell_{\rho}$ and the stated equality case, follows from \eqref{eq:frobeniusratio} and Theorem \ref{thm:A}.
\end{proof}

Theorem \ref{thm:B} depends on $\rho$ only through $\ell_{\rho}$. Because $\rho(\g) \subset \gl(V)$, the classical inequality \eqref{eq:bwclassical} always gives the constant $2$, so Theorem \ref{thm:B} improves \eqref{eq:bwclassical} on $\rho(\g)$ exactly when $\ell_{\rho} > 1$. By Lemma \ref{lem:indexone} below, the only representations of Dynkin index $1$ are, up to trivial summands, the defining representations of $\sll(n, \com)$ and $\sp(2n, \com)$ and their duals (recall $\sll(2, \com) = \sp(2, \com)$, $\so(5, \com) \simeq \sp(4, \com)$ and $\so(6, \com) \simeq \sll(4, \com)$). More useful is the comparison with the classical subalgebra containing $\rho(\g)$: if $\rho$ is irreducible and orthogonal, then $\rho(\g) \subset \so(V)$, and if it is symplectic, $\rho(\g) \subset \sp(V)$; the optimal constants on these are $1$ (if $\dim V \geq 4$) and $2$ (Corollary \ref{cor:matrix} below). Table \ref{tab:improvement} lists, for each simple type, the dual Coxeter number, the representation $V$ of smallest dimension, its Dynkin index computed from \eqref{eq:indexweights}, the optimal constant $2/\ell_{V}$ of \eqref{eq:repbound}, and the optimal constant for the classical Lie algebra $\sll(V)$, $\so(V)$, or $\sp(V)$ containing $\rho(\g)$. For the orthogonal representations of $B_{n}$, $D_{n}$, and $G_{2}$ in Table \ref{tab:improvement}, Theorem \ref{thm:B} recovers the Bloch--Iserles constant $1$, whereas for $F_{4}$, $E_{6}$, $E_{7}$, and $E_{8}$ it improves the constant of the containing classical Lie algebra by the factors $3$, $6$, $12$, and $30$, respectively. The constant $1/\hv$ of Theorem \ref{thm:A} is the case of the adjoint representation, which for $E_{8}$ is the smallest one.

\begin{lemma}\label{lem:indexone}
A nonzero representation $\rho$ of a complex simple Lie algebra $\g$ has Dynkin index $\ell_{\rho} = 1$ if and only if, up to trivial summands, $\rho$ is the defining representation of $\sll(n, \com)$, $n \geq 2$, or its dual, or the defining representation of $\sp(2n, \com)$, $n \geq 1$.
\end{lemma}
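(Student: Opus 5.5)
The plan is to work with the weight formula \eqref{eq:indexweights}, $\ell_{\rho} = \tfrac{1}{2}\sum_{\mu}m_{\mu}\mu(\theta^{\vee})^{2}$, together with additivity of the index under direct sums. The integers $\mu(\theta^{\vee})$ are the eigenvalues of $\theta^{\vee}$, the neutral element of the long-root $\sll(2,\com)$-triple of Lemma \ref{lem:lower}\ref{lower2}. Hence the formula says that $\ell_{\rho}$ equals the index of the restriction of $\rho$ to the long-root subalgebra $\subs_{\theta}$. For $\sll(2,\com)$, the irreducible representation of dimension $d$ has index $\tfrac{1}{2}\sum_{k}(d-1-2k)^{2} = (d-1)d(d+1)/6$, which equals $1$ exactly when $d = 2$ and is at least $4$ when $d \geq 3$.

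Consequently $\ell_{\rho} = 1$ holds if and only if $V$ restricted to $\subs_{\theta}$ is one copy of $\com^{2}$ plus trivial summands. In weight language: every weight satisfies $\mu(\theta^{\vee}) \in \{-1,0,1\}$, and exactly two weights, counted with multiplicity, have $\mu(\theta^{\vee}) = \pm 1$. Nontrivial irreducible summands contribute positive index, because $\rho$ restricted to such a summand is injective, so $\tr\rho(\theta^{\vee})^{2} > 0$. By additivity, exactly one nontrivial irreducible summand $V_{\lambda}$ occurs, and the problem reduces to the irreducible case.

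For $V_{\lambda}$ irreducible with highest weight $\lambda \neq 0$, the Weyl orbit $W\lambda$ lies among the weights. If $\lambda(\theta^{\vee}) = 0$, then $\lambda$ is orthogonal to the highest root and hence to every positive root, since dominance gives $0 \leq (\lambda, \al) \leq (\lambda, \theta)$. That would force $\lambda = 0$, so $\lambda(\theta^{\vee}) = 1$. The weights $w\lambda$ with $(w\lambda)(\theta^{\vee}) = \pm 1$ then correspond to $w^{-1}\theta$ having $(\lambda, w^{-1}\theta^{\vee}) = \pm 1$, that is, to long roots $\al$ with $(\lambda, \al^{\vee}) \neq 0$. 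I will count these via the Weyl orbit of $\theta$ (all long roots) rather than $W\lambda$. The condition that exactly two weights pair nontrivially with $\theta^{\vee}$ translates into the condition that $\lambda$ pairs nonzero with exactly one pair $\pm\al$ of long roots, up to the stabilizer bookkeeping. Equivalently, $\dim V_{\lambda} = 2 + \#\{\mu : \mu(\theta^{\vee}) = 0\}$, with all other weights killed by $\theta^{\vee}$.

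The main obstacle is turning this into the classification. I would first try a direct argument. Take the long root $\beta$ with $\lambda(\beta^{\vee}) = 1$ that is conjugate to $\theta$. For every long root $\gamma$ not equal to $\pm\beta$, $W$-transitivity and the counting above force $(w\lambda)(\theta^{\vee}) = 0$, so $\lambda$ is orthogonal to all long roots except $\pm\beta$. Choosing a long root $\gamma$ with $(\beta,\gamma) \neq 0$ then gives a contradiction unless the long roots form a system of type $A_{1}^{k}$ in a suitable sense. This pins $\lambda$ down to a minuscule-type weight: $\varpi_{1}$ or $\varpi_{n}$ for $A_{n}$, and $\varpi_{1}$ for $C_{n}$, where the long roots $\pm 2\ep_{i}$ are mutually orthogonal.

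If that combinatorics becomes messy, the fallback is a case check. For each type I would list the fundamental weight $\varpi$ with $\varpi(\theta^{\vee}) = 1$, since $\theta^{\vee} = \sum a_{i}^{\vee}\al_{i}^{\vee}$ forces $\lambda = \varpi_{i}$ with comarks $a_{i}^{\vee} = 1$. I would then compute $\ell$ for these finitely many families from \eqref{eq:indexweights} or from known index tables. The vector representation of $\so$ has index $2$, spin representations have index at least $2$ for rank $\geq 3$, and the minimal $E_{6}$ and $E_{7}$ representations have indices $6$ and $12$ by Table \ref{tab:improvement}. Only the listed representations give $1$. The converse direction is the check in Lemma \ref{lem:dynkinformula}\ref{dynkinformula1} for $\sll(n,\com)$, together with the analogous trace-form check for $\sp(2n,\com)$; duals have the same index, since the weights negate.
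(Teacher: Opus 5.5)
\textbf{The orbit-counting argument is wrong.} Your reduction steps are correct. They are also more self-contained than the paper's proof. Identifying $\ell_{\rho}$ with the index of the restriction to $\subs_{\theta}$ gives integrality and positivity on nontrivial summands without Dynkin's theorem. Then $\lambda(\theta^{\vee}) = \sum_{i} c_{i}a_{i}^{\vee} = 1$ sends you straight to fundamental weights $\varpi_{i}$ with comark $a_{i}^{\vee} = 1$. The paper instead uses Dynkin's formula $\ell_{V(\lambda)} = \dim V(\lambda)(\lambda, \lambda + 2\delta)/\dim\g$ with a monotonicity argument, and then cites Laszlo--Sorger.

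The problem comes right after, in your ``direct argument''. Knowing that $\lambda$ is the only weight of $W\lambda$ on which $\theta^{\vee}$ takes the value $1$ does not give $(w\lambda)(\theta^{\vee}) = 0$ whenever $w^{-1}\theta \neq \pm\theta$. For $w \in \operatorname{Stab}_{W}(\lambda)$ you get $(\lambda, (w\theta)^{\vee}) = (\lambda, \theta^{\vee}) = 1$, so every long root in $\operatorname{Stab}_{W}(\lambda)\theta$ pairs to $1$ with $\lambda$. Take $\sll(3, \com)$ and $\lambda = \varpi_{1}$: both $\ep_{1} - \ep_{2}$ and $\ep_{1} - \ep_{3}$ pair to $1$ with $\lambda$, yet the defining representation has index $1$. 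Your claim that $\lambda$ is orthogonal to all long roots except $\pm\beta$ is therefore false. The resulting ``$A_{1}^{k}$'' conclusion would exclude exactly the cases $A_{n}$, $n \geq 2$, that the lemma asserts. The correct consequence of the orbit count is only $\operatorname{Stab}_{W}(\theta) \subseteq \operatorname{Stab}_{W}(\lambda)$.

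\textbf{The fallback misses the decisive cases.} The case check is a valid method, but it omits the families where index $1$ actually occurs. Every fundamental weight of $A_{n}$ and of $C_{n}$ has comark $1$, so you must also rule out the following.
\begin{itemize}
\item For $A_{n}$ and $2 \leq k \leq n-1$: $\Lambda^{k}\com^{n+1}$ has $\binom{n-1}{k-1} \geq 2$ weights with $\theta^{\vee}$-value $1$.
\item For $C_{n}$ and $k \geq 2$: the Weyl conjugates $\ep_{1} + \ep_{2} + \dots + \ep_{k}$ and $\ep_{1} - \ep_{2} + \ep_{3} + \dots + \ep_{k}$ of $\varpi_{k}$ both take the value $1$ on $\theta^{\vee}$.
\end{itemize}
$G_{2}$ and $F_{4}$ are also absent from your list, though Table \ref{tab:improvement} covers them.

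\textbf{A uniform repair of your orbit idea.} Suppose $\lambda = \varpi_{i}$ and $(\al_{i}, \theta) = 0$. Then $s_{\al_{i}}\varpi_{i} = \varpi_{i} - \al_{i} \neq \varpi_{i}$ is a second weight with $\theta^{\vee}$-value $1$, so $\ell \geq 2$. Hence $\ell = 1$ requires both $a_{i}^{\vee} = 1$ and $\al_{i}$ adjacent to the extended node of the affine Dynkin diagram. Inspecting the diagrams leaves exactly $\varpi_{1}, \varpi_{n}$ for $A_{n}$ and $\varpi_{1}$ for $C_{n}$. Your trace-form check then finishes the converse.
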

\begin{proof}
The index is additive under direct sums, takes positive integer values on nontrivial representations \cite[Theorem~2.2]{Dynkin-semisimplesubalgebras}, and vanishes on trivial ones. Hence a representation of index $1$ is the sum of an irreducible representation $V(\lambda)$ of index $1$ and trivial summands. By \cite[Theorem~2.5]{Dynkin-semisimplesubalgebras}, $\ell_{V(\lambda)} = \dim V(\lambda)\,(\lambda, \lambda + 2\delta)/\dim \g$, where $2\delta = \sum_{\al \in \Phi^{+}}\al$. Let $\varpi_{1}, \dots, \varpi_{r}$ be the fundamental weights. If $\lambda = \lambda' + \varpi_{j}$ with $\lambda'$ dominant and nonzero, then $\dim V(\lambda) > \dim V(\varpi_{j})$ by the Weyl dimension formula, and $(\lambda, \lambda + 2\delta) - (\varpi_{j}, \varpi_{j} + 2\delta) = (\lambda', \lambda' + 2\varpi_{j} + 2\delta) > 0$, so $\ell_{V(\lambda)} > \ell_{V(\varpi_{j})} \geq 1$. Hence $\lambda$ is a fundamental weight with $\ell_{V(\lambda)} = 1$. By \cite[Proposition~2.6]{Laszlo-Sorger}, the index of every representation is divisible by $2$, $6$, $12$, or $60$ unless $\g$ has type $A_{r}$ or $C_{r}$, and the fundamental weights of index $1$ are $\varpi_{1}$ and $\varpi_{r}$ for $A_{r}$ and $\varpi_{1}$ for $C_{r}$.
\end{proof}

\begin{table}[h]
\centering
\renewcommand{\arraystretch}{1.3}
\begin{tabular}{cccccccc}
\hline
$\g$ & $\dim\g$ & $\hv$ & $V$ & type of $V$ & $\ell_{V}$ & $2/\ell_{V}$ & classical constant\\
\hline
$A_n$ $(n\geq 1)$ & $n(n+2)$ & $n+1$ & $\com^{n+1}$ & complex & $1$ & $2$ & $2$ \\
$B_n$ $(n\geq 3)$ & $n(2n+1)$ & $2n-1$ & $\com^{2n+1}$ & orthogonal & $2$ & $1$ & $1$ \\
$C_n$ $(n\geq 2)$ & $n(2n+1)$ & $n+1$ & $\com^{2n}$ & symplectic & $1$ & $2$ & $2$ \\
$D_n$ $(n\geq 4)$ & $n(2n-1)$ & $2n-2$ & $\com^{2n}$ & orthogonal & $2$ & $1$ & $1$ \\
$G_2$ & $14$ & $4$ & $\com^{7}$ & orthogonal & $2$ & $1$ & $1$ \\
$F_4$ & $52$ & $9$ & $\com^{26}$ & orthogonal & $6$ & $\tfrac{1}{3}$ & $1$ \\
$E_6$ & $78$ & $12$ & $\com^{27}$ & complex & $6$ & $\tfrac{1}{3}$ & $2$ \\
$E_7$ & $133$ & $18$ & $\com^{56}$ & symplectic & $12$ & $\tfrac{1}{6}$ & $2$ \\
$E_8$ & $248$ & $30$ & $\com^{248}$ & orthogonal & $60$ & $\tfrac{1}{30}$ & $1$ \\
\hline
\end{tabular}
\caption{Dual Coxeter numbers, the smallest representation $V$, its Dynkin index $\ell_{V}$, the optimal constant $2/\ell_{V}$ in \eqref{eq:repbound} for the Frobenius norm on $\rho(\g) \subset \gl(V)$, and the optimal constant for the classical Lie algebra $\sll(V)$, $\so(V)$, or $\sp(V)$ containing $\rho(\g)$ (Corollary \ref{cor:matrix}).}
\label{tab:improvement}
\end{table}

\subsection{The classical matrix inequalities}\label{sec:classical}
For the classical Lie algebras in their defining representations, with $\tau X = -X^{\dagger}$, the Killing forms are
\begin{align}\label{eq:killingtrace}
&B_{\sll(n, \com)}(X, Y) = 2n\tr XY, \qquad B_{\sp(2n, \com)}(X, Y) = (2n+2)\tr XY,&
&B_{\so(n, \com)}(X, Y) = (n-2)\tr XY,
\end{align}
where $\sp(2n, \com)$ is the Lie algebra of $X$ with $X^{t}\Omega + \Omega X = 0$ for $\Omega = \left(\begin{smallmatrix}0 & I_{n}\\ -I_{n} & 0\end{smallmatrix}\right)$ and $\so(n, \com)$ that of the complex skew-symmetric matrices; both are closed under $X \mapsto X^{\dagger}$. By \eqref{eq:killingtrace} and \eqref{eq:Bnorm}, with $\hv$ equal to $n$, $n+1$, $n-2$ respectively, the defining representations have Dynkin indices $1$, $1$, and $2$ (for $n \geq 5$), while for $\so(3, \com) \simeq \sll(2, \com)$ the index is $4$, and for $\so(4, \com) \simeq \sll(2, \com) \oplus \sll(2, \com)$ the restriction of $\com^{4}$ to each ideal is two copies of the defining representation of $\sll(2, \com)$, of index $2$.

\begin{corollary}\label{cor:matrix}
Let $\fnorm{X}^{2} = \tr XX^{\dagger}$ be the Frobenius norm of the complex matrix $X$, and let $e_{1}, \dots, e_{n}$ be the standard basis of $\com^{n}$. In each item, equality is characterized for nonzero $X$ and $Y$ (respectively $A$ and $B$).
\begin{enumerate}[label=(\arabic*), ref=(\arabic*)]
\item\label{mat1} (Böttcher--Wenzel) $\fnorm{[X, Y]}^{2} \leq 2\fnorm{X}^{2}\fnorm{Y}^{2}$ for $X, Y \in \gl(n, \com)$, $n \geq 2$, with equality if and only if $\tr XY^{\dagger} = 0$ and there is a unitary matrix $U$ such that $UXU^{-1}$ and $UYU^{-1}$ lie in the subalgebra $\sll(2, \com) \subset \gl(n, \com)$ of trace-free matrices acting on $\spn_{\com}\{e_{1}, e_{2}\}$ and annihilating $e_{3}, \dots, e_{n}$; for example $X = E_{12}$, $Y = E_{21}$. This is the characterization of Cheng, Vong, and Wenzel \cite[Theorems~3.1 and 3.2]{Cheng-Vong-Wenzel}.
\item\label{mat2} $\fnorm{[X, Y]}^{2} \leq 2\fnorm{X}^{2}\fnorm{Y}^{2}$ for $X, Y \in \sp(2n, \com)$, $n \geq 1$, with equality if and only if $\tr XY^{\dagger} = 0$ and there is $U \in Sp(2n, \com) \cap U(2n)$ such that $UXU^{-1}$ and $UYU^{-1}$ lie in the subalgebra $\sp(2, \com) \subset \sp(2n, \com)$ acting on $\spn_{\com}\{e_{1}, e_{n+1}\}$ and annihilating the other basis vectors.
\item\label{mat3} For complex skew-symmetric $X, Y \in \so(n, \com)$, $n \geq 3$,
\begin{align}\label{eq:BImatrix}
\fnorm{[X, Y]}^{2} \leq c_{n}\fnorm{X}^{2}\fnorm{Y}^{2}, \qquad c_{3} = \tfrac{1}{2}, \quad c_{n} = 1 \ (n \geq 4).
\end{align}
For $n = 3$ equality holds if and only if $\tr XY^{\dagger} = 0$. For $n \geq 4$ equality holds if and only if $\tr XY^{\dagger} = 0$ and there is $U \in SO(n)$ such that $UXU^{-1}$ and $UYU^{-1}$ lie in one of the two simple ideals of the subalgebra $\so(4, \com) \subset \so(n, \com)$ acting on $\spn_{\com}\{e_{1}, \dots, e_{4}\}$ and annihilating $e_{5}, \dots, e_{n}$. For real skew-symmetric matrices \eqref{eq:BImatrix} is the inequality of Bloch--Iserles \cite{Bloch-Iserles}.
\item\label{mat4} (Chern--do Carmo--Kobayashi) $\fnorm{[A, B]}^{2} \leq 2\fnorm{A}^{2}\fnorm{B}^{2}$ for real symmetric $n \times n$ matrices $A$ and $B$, with equality if and only if there are $O \in O(n)$ and $a, b \in \rea$ such that $OAO^{-1} = a(E_{11} - E_{22})$ and $OBO^{-1} = b(E_{12} + E_{21})$.
\end{enumerate}
\end{corollary}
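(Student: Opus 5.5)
Each item will be obtained by specializing Theorem \ref{thm:B} (or Corollary \ref{thm:reductive} where the algebra is not simple), using the Dynkin indices recorded just before the statement. I would then translate the abstract equality case of Theorem \ref{thm:A} into matrix language.

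\emph{Item \ref{mat1}.} Decompose $\gl(n,\com)=\com I\oplus\sll(n,\com)$. The Frobenius norm is invariant under $U(n)$, so Corollary \ref{thm:reductive} applies. The scalar parts of $X$ and $Y$ only enlarge the right-hand side, and they must vanish at equality. On $\sll(n,\com)$ the defining representation has $\ell=1$, so Theorem \ref{thm:B} gives the constant $2$. The equality case of Theorem \ref{thm:A} requires a $\tau$-stable long-root subalgebra. By Lemma \ref{lem:lower}\ref{lower3} such a subalgebra is $\Ad(G_0)$-conjugate to $\subs_\theta$, with $G_0=PSU(n)$. Conjugating by a representative in $U(n)$, $\subs_\theta$ becomes the $\sll(2,\com)$ acting on $\spn\{e_1,e_2\}$. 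For $\sll(n,\com)$ we have $H=2n\tr XY^\dagger$, so the condition $H(X,Y)=0$ reads $\tr XY^\dagger=0$.

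\emph{Item \ref{mat2}.} This is the same argument with $\ell=1$. The group $G_0$ is covered by $Sp(2n,\com)\cap U(2n)$. In the standard realization, $\subs_\theta$ for the highest root $2\ep_1$ is the $\sp(2,\com)$ acting on $\spn\{e_1,e_{n+1}\}$.

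\emph{Item \ref{mat3}.} For $n\ge5$, $\ell=2$ gives $c_n=1$. The long-root $\sll(2)$ for $\theta=\ep_1+\ep_2$ is an ideal of the $\so(4,\com)$ on $\spn\{e_1,\dots,e_4\}$, and the conjugation is realized in $SO(n)$. For $n=3$, $\so(3,\com)\simeq\sll(2,\com)$ has $\ell=4$, so the constant is $1/2$. There is only one subalgebra, namely the whole algebra, which is $\tau$-stable with index $1$, so equality reduces to orthogonality.

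For $n=4$, apply Corollary \ref{thm:reductive} to the two $\sll(2)$ ideals. Each acts on $\com^4$ as two copies of the defining representation, so the index is $2$ and $\kappa_kh_k^\vee$ is the same for both ideals. The constant is therefore $1$, and equality forces $X,Y$ into a single ideal with $\tr XY^\dagger=0$.

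The Bloch--Iserles statement is the restriction of this item to real matrices.

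\emph{Item \ref{mat4}.} For real symmetric $A,B$, put $X=iA$ and $Y=iB$. These are Hermitian up to a factor and lie in $i\,\mathfrak{o}(n)^\perp\subset\gl(n,\com)$. Item \ref{mat1} gives the inequality directly. For equality, $[A,B]$ is real skew-symmetric, and I would use the equality structure from item \ref{mat1}: $X$ and $Y$ lie in a unitarily conjugated $\sll(2)$ on a $2$-plane $P$.

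Since $A$ and $B$ are real, the $2$-plane can be taken real. Indeed, $P$ must be invariant under $A$, $B$ and complex conjugation, being spanned by the ranges of $A$ and $B$, which are real. So the conjugation can be done by some $O\in O(n)$, reducing to real symmetric trace-free $2\times2$ matrices with $\tr AB=0$. Diagonalizing $A$ yields $a(E_{11}-E_{22})$, and orthogonality forces $B$ to be a multiple of $E_{12}+E_{21}$.

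The main obstacles are the equality bookkeeping in items \ref{mat3} for $n=4$ and \ref{mat4}. In item \ref{mat3} with $n=4$ one must check the constants of the two ideals. In item \ref{mat4} one must justify that the unitary conjugation can be chosen real orthogonal, and I would argue this through the range of $A$ and $B$.
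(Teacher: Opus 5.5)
Your proposal is correct and follows the paper's proof essentially step for step: Theorem~\ref{thm:B} and Corollary~\ref{thm:reductive} give the inequalities, and Lemma~\ref{lem:lower}\ref{lower3} reduces each equality case to locating $\subs_{\theta}$, with $\so(3,\com)$ and $\so(4,\com)$ treated separately and item~\ref{mat4} handled by a reality argument via the ranges of $A$ and $B$. The one step you assert without proof is that these ranges span $P$; this holds because $A|_{P}$ is a nonzero trace-free Hermitian operator on the $2$-plane $P$, so its eigenvalues are $\pm\lambda$ with $\lambda\neq 0$ and already $\operatorname{range}(A)=P$ (the paper's phrasing is that a rank-one real symmetric matrix is not trace-free on a subspace containing its range).
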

\begin{proof}
The inequalities in \ref{mat1}--\ref{mat3} follow from Theorem \ref{thm:B} and the indices just computed, using Corollary \ref{thm:reductive} for $\gl(n, \com) = \com I \oplus \sll(n, \com)$ and for $\so(4, \com)$. For the equality cases, take on $\sll(n, \com)$, $\sp(2n, \com)$, and $\so(n, \com)$ the compact conjugation $\tau X = -X^{\dagger}$, whose compact groups act by conjugation through $SU(n)$, $Sp(2n, \com) \cap U(2n)$, and $SO(n)$. On each of these Lie algebras $H$ is a positive multiple of $\tr XY^{\dagger}$, so the condition $H(x, y) = 0$ of Theorem \ref{thm:A} is $\tr XY^{\dagger} = 0$. By Theorem \ref{thm:B}, equality holds exactly for the images of the equality pairs of Theorem \ref{thm:A}. By Lemma \ref{lem:lower}\ref{lower3}, every $\tau$-stable subalgebra $\subs \simeq \sll(2, \com)$ of Dynkin index $1$ is conjugate under the compact group $G_{0}$ to the subalgebra $\subs_{\theta}$ attached to the highest root, so it suffices to identify this subalgebra in each case. For $\sll(n, \com)$, $n \geq 2$, the subalgebra attached to the highest root is the subalgebra $\sll(2, \com)$ of \ref{mat1}; by Corollary \ref{thm:reductive}, equality in $\gl(n, \com)$ forces $X, Y \in \sll(n, \com)$. For $\sp(2n, \com)$, with $\Omega$ as above, the highest root is $2\ep_{1}$ and its subalgebra is the subalgebra $\sp(2, \com)$ of \ref{mat2}. For $\so(n, \com)$ with $n \geq 5$, the subalgebra attached to the highest root $\ep_{1} + \ep_{2}$ lies, together with the one attached to $\ep_{1} - \ep_{2}$, in a subalgebra $\so(4, \com)$ acting on a $4$-dimensional subspace spanned by real vectors, as one of its two simple ideals, which gives \ref{mat3} after conjugation by $SO(n)$. For $n = 4$, Corollary \ref{thm:reductive} applies to the two ideals of $\so(4, \com) \simeq \sll(2, \com) \oplus \sll(2, \com)$, each of which carries the constant $1$; for $n = 3$, $\so(3, \com) \simeq \sll(2, \com)$ is its own unique subalgebra isomorphic to $\sll(2, \com)$, so the only condition is $\tr XY^{\dagger} = 0$. The pairs $E_{12}, E_{21}$ and, for $\so(n, \com)$, the real matrices
\begin{align}\label{eq:BIextremal}
\begin{aligned}
&n = 3: & X &= E_{12} - E_{21}, & Y &= E_{23} - E_{32}, & \frac{\fnorm{[X, Y]}^{2}}{\fnorm{X}^{2}\fnorm{Y}^{2}} &= \frac{2}{2\cdot 2} = \frac{1}{2},\\
&n \geq 4: & X &= E_{12} - E_{21} + E_{34} - E_{43}, & Y &= E_{13} - E_{31} - E_{24} + E_{42}, & \frac{\fnorm{[X, Y]}^{2}}{\fnorm{X}^{2}\fnorm{Y}^{2}} &= \frac{16}{4\cdot 4} = 1,
\end{aligned}
\end{align}
padded by zeros for $n > 4$, attain equality, as the values in \eqref{eq:BIextremal} show.

For \ref{mat4}, $iA$ and $iB$ lie in $\mathfrak{u}(n)$ and $\fnorm{[iA, iB]} = \fnorm{[A, B]}$, so the inequality is the restriction of \ref{mat1}. If equality holds, then by \ref{mat1}, $\tr AB = 0$ and there is a $2$-dimensional subspace $P \subset \com^{n}$ such that $A$ and $B$ vanish on its Hermitian orthogonal complement and are trace-free on $P$. A real symmetric matrix of rank one is not trace-free on a subspace containing its range, so $P$ is the sum of the ranges of $A$ and $B$, which is the complexification of a real plane. After conjugation by an element of $O(n)$, $P = \spn_{\com}\{e_{1}, e_{2}\}$, and $A$ and $B$ are trace-free real symmetric $2 \times 2$ blocks, orthogonal for the trace form. A rotation of this plane makes $A = a(E_{11} - E_{22})$, and then $B = b(E_{12} + E_{21})$. Conversely, for these matrices $\fnorm{[A, B]}^{2} = 8a^{2}b^{2} = 2\fnorm{A}^{2}\fnorm{B}^{2}$.
\end{proof}

For Hermitian matrices $A$ and $B$, Corollary \ref{cor:matrix}\ref{mat1} applied to $iA$ and $iB$ gives $\fnorm{[A, B]}^{2} \leq 2\fnorm{A}^{2}\fnorm{B}^{2}$, which is the two-matrix DDVV-type inequality for Hermitian matrices. Since the trace-free Hermitian $2 \times 2$ matrices form the real span of $H_{1} = E_{11} - E_{22}$, $H_{2} = E_{12} + E_{21}$, and $H_{3} = i(E_{21} - E_{12})$, on which conjugation by $SU(2)$ acts through $SO(3)$ preserving the trace form, the equality case of \ref{mat1} shows that equality holds for nonzero $A$ and $B$ if and only if, up to unitary conjugation, $A = \operatorname{diag}(aH_{1}, 0)$ and $B = \operatorname{diag}(bH_{2}, 0)$ with $a, b \in \rea \setminus \{0\}$. The same equality case is obtained, for $\gz = \su(n)$ multiplied by $i$, from Corollary \ref{thm:compact}\ref{compact2}.
In \ref{mat4}, the real symmetric matrices, multiplied by $i$, form the tangent space of the compact symmetric space $SU(n)/SO(n)$ (up to the center), and Chern--do Carmo--Kobayashi obtained the characterization of the equality case as well; from the present point of view it is the case of $SU(n)/SO(n)$ of the curvature bounds for symmetric spaces in Section \ref{sec:curvature}.

For $m$ matrices $B_{1}, \dots, B_{m}$ in a prescribed class, DDVV-type inequalities bound $\sum_{r, s = 1}^{m}\fnorm{[B_{r}, B_{s}]}^{2}$ by a multiple $c\big(\sum_{r=1}^{m}\fnorm{B_{r}}^{2}\big)^{2}$; see the survey \cite{Ge-Li-Tang-Zhou-survey}. For $m = 2$, a DDVV-type inequality with constant $c$ is equivalent to the bound $\fnorm{[X, Y]}^{2} \leq 2c\fnorm{X}^{2}\fnorm{Y}^{2}$, as follows by rescaling $X$ and optimizing. Hence the optimal two-matrix DDVV-type constants for real symmetric, real skew-symmetric, Hermitian, skew-Hermitian, general complex, and complex skew-symmetric matrices are given by Corollary \ref{cor:matrix} and its restrictions. The published DDVV-type constants for these classes \cite{Ge, Ge-Xu-You-Zhou, Ge-Li-Zhou} concern $m \geq 3$ matrices and are different; for example, for $m \geq 3$ Hermitian matrices the optimal constant is $\tfrac{4}{3}$, and the extremal triples are built from the matrices $H_{1}, H_{2}, H_{3}$ described after Corollary \ref{cor:matrix} \cite[Theorem 1.1]{Ge-Xu-You-Zhou}. For general complex matrices the two-matrix case is \cite[Theorem 2.3(2)]{Ge-Li-Zhou}, which is derived there from \eqref{eq:bwclassical} and the equality case of \cite{Cheng-Vong-Wenzel}; its equality case, a pair unitarily conjugate to $(\operatorname{diag}(\tilde{B}_{1}, 0), \operatorname{diag}(\tilde{B}_{2}, 0))$ with $\tilde{B}_{1}, \tilde{B}_{2}$ trace-free, orthogonal $2 \times 2$ blocks of equal norm, is the statement that $x$ and $y$ are orthogonal elements of equal norm in a $\tau$-stable long-root subalgebra of $\sll(n, \com)$. The passage from real to complex matrices in \cite{Ge-Li-Zhou} proceeds by decomposing a complex matrix into Hermitian and skew-Hermitian, or real and imaginary, parts, and for $m \geq 3$ this yields optimal constants. For complex skew-symmetric matrices \cite[Corollary 2.6]{Ge-Li-Zhou} treats $m \geq 3$ matrices, with constants $\tfrac{1}{3}$ ($n = 3$) and $\tfrac{2}{3}$ ($n \geq 4$); the two matrix bound \eqref{eq:BImatrix} does not follow from it, nor does it follow from the decomposition $X = X_{1} + iX_{2}$, $Y = Y_{1} + iY_{2}$ of complex skew-symmetric matrices into real skew-symmetric real and imaginary parts: for $n \geq 4$, combining $\fnorm{[X, Y]}^{2} \leq 2\sum_{a=1}^{2}\sum_{b=1}^{2}\fnorm{[X_{a}, Y_{b}]}^{2}$ with the real Bloch--Iserles inequality yields only the constant $2$ in \eqref{eq:BImatrix}. 

That the Bloch--Iserles constant survives complexification is an instance of the general phenomenon of Theorem \ref{thm:A}, that the optimal constant on $\g$ equals that on $\gz$ for every complex simple $\g$; Theorem \ref{thm:A} thus answers, for two elements and the Hermitian norm \eqref{eq:Hdef}, the question of \cite[Section 1, question (2)]{Ge-Li-Zhou}, \cite[Question 4.2]{Ge-Li-Tang-Zhou-survey} on DDVV-type inequalities for Lie algebras.

\begin{remark}
For $\g = \gl(n, \com)$ with $\tau X = -X^{\dagger}$, the operator $\qad(X) = -\ad(\tau X)\ad(X)$ of \eqref{eq:qaddefined} is $Y \mapsto [X^{\dagger}, [X, Y]]$, the operator $T_{X}$ of Lu and Wenzel \cite{Ge-Li-Lu-Zhou, Lu-Wenzel} whose eigenvalues are the subject of their conjectures; the inequality \eqref{eq:bwclassical} is $\lambda_{1}(T_{X}) \leq 2\fnorm{X}^{2}$, and its analogue on a general complex simple $\g$ is $\qad(x) \leq (\hv)^{-1}\norm{x}^{2}\id_{\g}$, which follows from Theorem \ref{thm:A} because $H(\qad(x)y, y) = \norm{[x, y]}^{2}$ by \eqref{basic5}. The Lu--Wenzel conjectures on partial sums of the eigenvalues of $T_{X}$ have evident formulations for $\qad(x)$, and it would be interesting to know whether the bounds they predict are also governed by root data. Likewise, the DDVV-type inequality $\sum_{r, s = 1}^{m}\norm{[x_{r}, x_{s}]}^{2} \leq c_{m}(\g)\big(\sum_{r = 1}^{m}\norm{x_{r}}^{2}\big)^{2}$ for $m \geq 3$ elements of $\g$ is not treated here; Theorem \ref{thm:A} gives $c_{2}(\g) = 1/(2\hv)$, and for $\gz = \so(n)$ and $\mathfrak{u}(n)$ the constants $c_{m}(\gz)$, $m \geq 3$, are known \cite{Ge, Ge-Xu-You-Zhou}.
\end{remark}

\subsection{Complex sectional curvature}\label{sec:curvature}
The equality of complex and real comass in \eqref{eq:comassintro} has a geometric meaning. Let $(\mathcal{N}, \langle\dum, \dum\rangle)$ be a Riemannian manifold with curvature $R(X, Y) = \nabla_{X}\nabla_{Y} - \nabla_{Y}\nabla_{X} - \nabla_{[X, Y]}$ and $R(X, Y, Z, V) = \langle R(X, Y)Z, V\rangle$. Extend $\langle\dum, \dum\rangle$ and $R$ complex multilinearly to $T_{p}\mathcal{N} \otimes \com$, and let $\langle\langle u, v\rangle\rangle = \langle u, \bar{v}\rangle$. The \emph{complex sectional curvature} of a complex $2$-plane $\sigma = \spn_{\com}\{u, v\} \subset T_{p}\mathcal{N} \otimes \com$ is \cite{Micallef-Moore}
\begin{align}\label{eq:complexsectional}
\mathsf{K}_{\com}(\sigma) = \frac{R(u, v, \bar{v}, \bar{u})}{\langle\langle u, u\rangle\rangle\langle\langle v, v\rangle\rangle - |\langle\langle u, v\rangle\rangle|^{2}};
\end{align}
it depends only on $\sigma$ and equals the sectional curvature when $\sigma$ is the complexification of a real plane. Let $G_{0}$ be as in Section \ref{sec:prelim}, with bi-invariant metric $-\kappa B$, $\kappa > 0$; everything that follows depends only on $\gz$ and $\kappa$, so it holds for any compact connected Lie group with Lie algebra $\gz$. Then $R(X, Y)Z = -\tfrac{1}{4}[[X, Y], Z]$ for left-invariant vector fields \cite{Helgason}, the complexified tangent space at the identity is $\g$, and $\langle\langle u, v\rangle\rangle = \kappa H(u, v)$. By the invariance of $B$, $R(u, v, \tau v, \tau u) = \tfrac{\kappa}{4}B([u, v], \tau[v, u]) = \tfrac{\kappa}{4}\norm{[u, v]}^{2}$, so
\begin{align}\label{eq:KCgroup}
\mathsf{K}_{\com}(\sigma) = \frac{1}{4\kappa}\sect(\sigma), \qquad \sigma \in \grass(2, \g),
\end{align}
and Corollaries \ref{cor:sectmax} and \ref{thm:compact} give the following.
\begin{corollary}\label{cor:groupcurvature}
The complex sectional curvature of $(G_{0}, -\kappa B)$ satisfies $0 \leq \mathsf{K}_{\com} \leq 1/(4\kappa\hv)$, with equality exactly for the complex planes contained in the complexification of a subalgebra of $\gz$ isomorphic to $\su(2)$ whose complexification has Dynkin index $1$. In particular, the maximal complex sectional curvature equals the maximal sectional curvature.
\end{corollary}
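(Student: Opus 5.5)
The corollary follows by transporting Corollary \ref{cor:sectmax} through the identity \eqref{eq:KCgroup}. The plan has three steps.

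\emph{Step 1 (lower bound).} The lower bound $0 \leq \mathsf{K}_{\com}$ is immediate. In \eqref{eq:KCgroup} the numerator $\tfrac{\kappa}{4}\norm{[u, v]}^{2}$ is nonnegative. The denominator is positive on linearly independent pairs by the Cauchy--Schwarz inequality. Before using \eqref{eq:KCgroup} I will check that complex conjugation on $T_{e}G_{0} \otimes \com = \g$ is exactly $\tau$, since the complexification is taken with respect to the real form $\gz$. Then $\bar u = \tau u$, and the computation $R(u, v, \tau v, \tau u) = \tfrac{\kappa}{4}\norm{[u, v]}^{2}$ recorded just before \eqref{eq:KCgroup} applies.

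\emph{Step 2 (upper bound and equality case).} Multiplying Corollary \ref{cor:sectmax} by $1/(4\kappa)$ gives $\mathsf{K}_{\com}(\sigma) \leq 1/(4\kappa\hv)$. Equality holds exactly when $\sigma \subset \subs$ for a $\tau$-stable $\subs \simeq \sll(2, \com)$ of Dynkin index $1$. It remains to translate this condition into the one in the statement:
\begin{enumerate}[label=(\roman*)]
\item If $\subs$ is $\tau$-stable, then $\subs = \subs_{0} \oplus i\subs_{0}$ with $\subs_{0} = \subs \cap \gz$. By Lemma \ref{lem:sl2str}\ref{sl2a}, $\subs_{0}$ is a compact real form of $\subs$ and is isomorphic to $\su(2)$. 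Hence $\subs$ is the complexification of a subalgebra of $\gz$ isomorphic to $\su(2)$, and that complexification has Dynkin index $1$.
\item Conversely, the complexification $\subs_{0} \otimes \com = \subs_{0} \oplus i\subs_{0}$ of any subalgebra $\subs_{0} \subset \gz$ is automatically $\tau$-stable. If $\subs_{0} \simeq \su(2)$, this complexification is isomorphic to $\sll(2, \com)$.
\end{enumerate}
So the two descriptions of the equality planes agree.

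\emph{Step 3 (real versus complex maximum).} For the final assertion I restrict to complexifications of real planes in $\gz$, where $\mathsf{K}_{\com}$ is the ordinary sectional curvature. Corollary \ref{thm:compact}\ref{compact1}, or Corollary \ref{cor:compactequality}, provides an $H$-orthogonal pair $x, y \in \gz$ with $F(x, y) = 1/\hv$. By Lemma \ref{fkwmaxlemma}\ref{fkw1} the plane they span has $\sect = 1/\hv$, so its sectional curvature is $1/(4\kappa\hv)$. Thus the real maximum reaches the complex upper bound, and the two maxima coincide.

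I expect no step to be a real obstacle. The only points needing care are the identification of $\bar{\ }$ with $\tau$ in Step 1 and the equivalence in Step 2 between "$\tau$-stable $\sll(2, \com)$" and "complexification of an $\su(2) \subset \gz$". Both are settled by the decomposition $\subs = \subs_{0} \oplus i\subs_{0}$ from Lemma \ref{lem:sl2str}.
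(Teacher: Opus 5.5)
Your proposal is correct and is essentially the paper's argument: the paper obtains the corollary directly from \eqref{eq:KCgroup} together with Corollaries \ref{cor:sectmax} and \ref{thm:compact}, and your Steps 1--3 make explicit two routine points the paper leaves implicit, namely the identification $\bar{u} = \tau u$ and the equivalence between $\tau$-stable subalgebras $\subs \simeq \sll(2,\com)$ of Dynkin index $1$ and complexifications of subalgebras $\su(2) \subset \gz$. In Step 3 the pair does not need to be $H$-orthogonal: the inequalities $F \leq \sect \leq 1/\hv$ (Lemma \ref{fkwmaxlemma}\ref{fkw1} and Corollary \ref{cor:sectmax}) already force $\sect = 1/\hv$ on the plane spanned by any pair in $\gz$ with $F = 1/\hv$.
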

On real planes the maximum is classical \cite[p.~334, Theorem~11.1]{Helgason-dgls}, \cite{Liu-curvatureestimates}, \cite[Theorem~4]{Meshcheryakov} (see Remark \ref{rem:meshcheryakov}). The extremal complex planes need not be complexifications of real planes: the plane spanned by $e$ and $h$ in Example \ref{ex:eh} is extremal but not $\tau$-stable.

Let $\mathcal{N} = U/K$ be an irreducible compact symmetric space with $U$ a compact simple Lie group, $\mathfrak{u} = \mathfrak{k} \oplus \p$ the Cartan decomposition, and the metric $-\kappa B$ on $\p \simeq T_{o}\mathcal{N}$. Since $R(X, Y)Z = -[[X, Y], Z]$ for $X, Y, Z \in \p$ \cite{Helgason}, the same computation gives $\mathsf{K}_{\com} = \kappa^{-1}\sect$ on $\grass(2, \p \otimes \com)$, so $\mathsf{K}_{\com} \leq 1/(\kappa\hv)$ by Corollary \ref{cor:sectmax}, where $\hv$ is the dual Coxeter number of $\mathfrak{u} \otimes \com$. If equality holds for $\spn_{\com}\{x, y\} \subset \p \otimes \com$, then by Theorem \ref{thm:A} the subalgebra spanned by $x$, $y$, and $\tau[x, y] \in \mathfrak{k} \otimes \com$ is $\tau$-stable of Dynkin index $1$ and is stable under the involution of the symmetric pair, so its compact form contains a real plane in $\p$ of curvature $1/(\kappa\hv)$ by \eqref{eq:sl2norm}. Hence the bound is attained if and only if the maximal sectional curvature, the squared length of the highest restricted root \cite[p.~334, Theorem~11.1]{Helgason-dgls}, \cite[Table~5.1]{Liu-curvatureestimates}, equals $1/(\kappa\hv)$. This holds for $SU(n)/SO(n)$, which gives Corollary \ref{cor:matrix}\ref{mat4}, and for Hermitian symmetric spaces, whose highest root is noncompact. It fails for the spheres $SO(n+1)/SO(n)$, $n \geq 4$, of constant curvature $1/(2\kappa\hv)$, and for $SU(2n)/Sp(n)$ and $E_{6}/F_{4}$, whose maximal sectional curvature is also $1/(2\kappa\hv)$; for the latter two, the commutator inequalities for quaternionic and octonionic Hermitian matrices of \cite[Theorem~6.3]{Fox-sectna} are therefore not consequences of Theorem \ref{thm:A}. I do not know whether, for every irreducible compact symmetric space $U/K$ with $U$ a compact simple Lie group, the maximal complex sectional curvature equals the maximal sectional curvature, as Corollary \ref{cor:groupcurvature} shows for compact simple Lie groups.

\section{Critical points and Dynkin indices}\label{sec:critical}
The maxima of the functions $F$, $\sect$, and $W$ have been described. Their other critical points having nonzero critical value are also closely linked to interesting subalgebras of $\g$. 
This section describes these critical points, their critical values, and the Hessian of $W$ at them, and shows that the set of maximum points of $W$ is a nondegenerate critical manifold.

Lemma \ref{lem:critFK} records the equations of the critical points of $F$ and $\sect$ and some relations between critical points of $F$, $\sect$, and $W$.
\begin{lemma}\label{lem:critFK}
\leavevmode\noindent
\begin{enumerate}[label=(\arabic*), ref=(\arabic*)]
\item\label{critF} $(x, y) \in (\g \setminus \{0\})^{2}$ is a critical point of $F$ if and only if 
\begin{align}\label{eq:F}
\qad(y)x = F(x,y)\norm{y}^2 x, \qquad \qad(x)y = F(x,y)\norm{x}^2 y .
\end{align}
If $(x,y)\in\crtnt(F)$, then $H(x, y) = 0$ and $(x,y,\tau[x,y])\in\crtnt(W)$, with $W(x,y,\tau[x,y])=F(x,y)$.
\item\label{critK} $(x, y) \in \stiefel(\g)$ is a critical point of $\sect$ if and only if
\begin{align}\label{eq:K}
\qad(y)x = \sect(x, y)\big(\norm{y}^{2}x - H(x, y)y\big), \qquad \qad(x)y = \sect(x, y)\big(\norm{x}^{2}y - H(y, x)x\big).
\end{align}
\item\label{corr3} If $H(x, y) = 0$, then $(x, y)$ is a critical point of $\sect$ if and only if it is a critical point of $F$, and $(x, y)$ is a critical point of $\sect$ if and only if $\spn_{\com}\{x, y\}$ is a critical point of $\sect$ on $\grass(2, \g)$.
\item\label{corr2} If $a \in\crtnt(W)$, then $(a_{j}, a_{k})\in\crtnt(F)$ and $F(a_{j}, a_{k}) = W(a)$ for $j \neq k \in \{1, 2, 3\}$.
\end{enumerate}
\end{lemma}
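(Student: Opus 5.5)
Each part is a first-variation computation for a quotient of Hermitian quadratic forms, modeled on the proof of Lemma \ref{lem:critW}, with \eqref{basic5} doing the bookkeeping.

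For \ref{critF}, write $F(x,y)=N/D$ with $N=\norm{[x,y]}^2$ and $D=\norm{x}^2\norm{y}^2$. By \eqref{basic5}, $N=H(\qad(y)x,x)$, and $\qad(y)$ is $H$-Hermitian. Hence the derivative of $N$ in $x$ along $b$ is $2\re H(b,\qad(y)x)$, and the derivative of $D$ is $2\norm{y}^2\re H(b,x)$. Since $\re H$ is nondegenerate, $d_xF=0$ if and only if $\qad(y)x=F\norm{y}^2x$. The $y$-equation is symmetric, using $N=H(\qad(x)y,y)$. This gives \eqref{eq:F}.

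Next, for a critical point, pair the first equation with $y$: $H(\qad(y)x,y)=-H(\ad(\tau y)[y,x],y)$. Using \eqref{basic4} this equals $H([y,x],[y,y])=0$, so $F\norm{y}^2H(x,y)=0$. Since $F\neq0$, we get $H(x,y)=0$. For the claim about $W$, set $z=\tau[x,y]$. Lemma \ref{fkwmaxlemma}\ref{fkw2} gives $W(x,y,z)=F(x,y)$, and $W\le F$ everywhere with $F(x,y)=\max_zW$. I would verify \eqref{eq:W} directly. First, $[x,y]=\tau z$ is a multiple of $\tau z$, and the coefficient matches because $C=B([x,y],\tau[x,y])=-\norm{[x,y]}^2=-\norm{z}^2$. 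Second, $[y,z]=[y,\tau[x,y]]$; applying $\tau$ gives $-\ad(\tau y)$-type expressions, so $\tau[y,z]=-\qad(y)x$ up to sign. By \eqref{eq:F} this is a multiple of $x$, which yields the second relation of \eqref{eq:W}, and the third follows symmetrically. Once the directions match, the constants are forced by the pairing argument in the proof of Lemma \ref{lem:critW}. So the claim follows from Lemma \ref{lem:critW}.

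For \ref{critK}, the same differentiation applies with denominator $D'=\norm{x}^2\norm{y}^2-|H(x,y)|^2$. Its $x$-derivative is $2\re H(b,\norm{y}^2x-H(x,y)y)$, which gives \eqref{eq:K}.

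For \ref{corr3}, when $H(x,y)=0$ the equations \eqref{eq:K} reduce to \eqref{eq:F}, and $\sect=F$ there. The second statement holds because $\stiefel(\g)\to\grass(2,\g)$ is a submersion and $\sect$ is pulled back by \eqref{eq:sectinvariance}.

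For \ref{corr2}, rescale by Lemma \ref{thm:idem}\ref{idem1} so that $a$ is a $\tau$-twisted idempotent; since $F$ is invariant under scaling, the conclusion then holds for $a$ itself. Then $2[e_j,e_k]=\tau e_i$ and the components are orthogonal with equal norms. Compute $\qad(e_k)e_j=-[\tau e_k,[e_k,e_j]]=\pm\tfrac12[\tau e_k,\tau e_i]$. Applying $\tau$ to the idempotent relations, $[\tau e_k,\tau e_i]=\tfrac12\tau\tau e_j=\tfrac12 e_j$, so $\qad(e_k)e_j$ is a positive multiple of $e_j$. That multiple must equal $F\norm{e_k}^2$ by pairing with $e_j$. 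The same holds with the roles of $j$ and $k$ exchanged, so \eqref{eq:F} holds. Finally, $F(e_j,e_k)=W(e)$ by \eqref{eq:idemnorms}.

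The main obstacle is the sign and constant bookkeeping in \ref{critF} when checking \eqref{eq:W} for $(x,y,\tau[x,y])$.
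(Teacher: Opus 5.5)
Your proposal is correct and follows the paper's proof essentially step by step. It uses the first-variation computation via \eqref{basic5}, the pairing $H(\qad(y)x, y) = H([y,x],[y,y]) = 0$, the check of \eqref{eq:W} for $(x, y, \tau[x,y])$ using that $\tau$ is an automorphism, the submersion argument for \ref{corr3}, and the reduction to a $\tau$-twisted idempotent for \ref{corr2}. The only differences are cosmetic: where the paper computes constants explicitly (for instance $[y,\tau[x,y]] = \tau\qad(y)x$ exactly, with no sign ambiguity, and $\qad(e_{2})e_{1} = \tfrac{1}{4}e_{1}$), you let the pairing argument of Lemma~\ref{lem:critW} and the identity $H(\qad(y)x,x) = \norm{[x,y]}^{2}$ fix them, which is equally valid because only proportionality with a nonzero coefficient is needed.
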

\begin{proof}
By \eqref{basic5}, the derivative of $\norm{[x, y]}^{2}$ in the direction $(\dot{x}, \dot{y})$ is $2\re H(\qad(y)x, \dot{x}) + 2\re H(\qad(x)y, \dot{y})$, and the derivatives of $\norm{x}^{2}\norm{y}^{2}$ and of $\norm{x}^{2}\norm{y}^{2} - |H(x, y)|^{2}$ are $2\re H(\norm{y}^{2}x, \dot{x}) + 2\re H(\norm{x}^{2}y, \dot{y})$ and $2\re H(\norm{y}^{2}x - H(x, y)y, \dot{x}) + 2\re H(\norm{x}^{2}y - H(y, x)x, \dot{y})$. There follow \eqref{eq:F} and \eqref{eq:K}. 

If \eqref{eq:F} holds, then $F(x, y)\norm{y}^{2}H(x, y) = H(\qad(y)x, y) = H([y, x], [y, y]) = 0$. Let $z := \tau[x, y]$ and $t := \om(x, y, z) = -\norm{[x, y]}^{2}$, which is nonzero. Because $\tau$ is an automorphism, \eqref{eq:F} and $\norm{[x, y]}^{2} = F(x, y)\norm{x}^{2}\norm{y}^{2}$ give $[y, z] = \tau[\tau y, [x, y]] = \tau\qad(y)x = -(t/\norm{x}^{2})\tau x$ and $[z, x] = -\tau[\tau x, [x, y]] = \tau\qad(x)y = -(t/\norm{y}^{2})\tau y$, while $[x, y] = \tau z = -(t/\norm{z}^{2})\tau z$ because $\norm{z}^{2} = -t$. So \eqref{eq:W} holds, and $W(x, y, z) = |t|^{2}/(\norm{x}^{2}\norm{y}^{2}\norm{z}^{2}) = F(x, y)$. This completes the proof of \ref{critF}.

If $H(x, y) = 0$, then $\sect(x, y) = F(x, y)$ and \eqref{eq:K} coincides with \eqref{eq:F}. The map $\stiefel(\g) \to \grass(2, \g)$ is a surjective submersion along whose fibers $\sect$ is constant by \eqref{eq:sectinvariance}, which proves \ref{corr3}.

Because $F$ and $W$ are invariant under rescaling their arguments, by Lemma \ref{thm:idem}\ref{idem1} to prove \ref{corr2} it suffices to suppose that $a \in \crtnt(W)$ is a $\tau$-twisted idempotent $e \in \idem_{\tau}(\g^{3}, \tmlt)$. By Lemma \ref{thm:idem}\ref{idem2}, 
\begin{align}\label{eq:corrcheck}
\qad(e_{2})e_{1} = [\tau e_{2}, [e_{1}, e_{2}]] = \tfrac{1}{2}\tau[e_{2}, e_{3}] = \tfrac{1}{4}e_{1}, \qquad \qad(e_{1})e_{2} = -[\tau e_{1}, [e_{1}, e_{2}]] = \tfrac{1}{2}\tau[e_{3}, e_{1}] = \tfrac{1}{4}e_{2},
\end{align}
and $F(e_{1}, e_{2})\norm{e_{1}}^{2} = F(e_{1}, e_{2})\norm{e_{2}}^{2} = \tfrac{1}{4}$ by \eqref{eq:idemnorms}, so, by \eqref{eq:corrcheck}, $(e_{1}, e_{2})$ satisfies \eqref{eq:F}; the same computation applies to $(e_{2}, e_{3})$ and $(e_{3}, e_{1})$. This proves \ref{corr2}.
\end{proof}

Lemma \ref{thm:sl2crit} shows that every $\tau$-stable subalgebra of $\g$ isomorphic to $\sll(2, \com)$ and having Dynkin index $j$ determines critical points of each of $F$, $\sect$, and $W$ with critical value $1/(j\hv)$. 

\begin{lemma}\label{thm:sl2crit}
Let $\subs \subset \g$ be a $\tau$-stable subalgebra isomorphic to $\sll(2, \com)$ and having Dynkin index $j$, and let $z_{1}, z_{2}, z_{3}$ be as in \eqref{eq:xirelations} of Lemma \ref{lem:sl2str}\ref{sl2a}.
\begin{enumerate}[label=(\arabic*), ref=(\arabic*)]
\item\label{sl2crit2} Every pair of nonzero $H$-orthogonal $x, y \in \subs$ lies in $\crtnt(F)$, with $F(x, y) = 1/(j\hv)$.
\item\label{sl2crit1} Every $\sigma \in \grass(2, \subs)$ is a critical point of $\sect$ on $\grass(2, \g)$, with $\sect(\sigma) = 1/(j\hv)$.
\item\label{sl2crit3} $e := \tfrac{1}{2}(z_{1}, z_{2}, z_{3})$ is a $\tau$-twisted idempotent with $W(e) = 1/(j\hv)$ and $H(e, e) = \tfrac{3}{4}j\hv$.
\end{enumerate}
Consequently, by Lemma \ref{lem:lower}\ref{lower1}, for every subalgebra of $\g$ isomorphic to $\sll(2, \com)$, having Dynkin index $j$, the number $1/(j\hv)$ is a critical value of each of $F$, $\sect$, and $W$.
\end{lemma}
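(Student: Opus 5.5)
The plan is to reduce everything to the explicit structure of $\subs$ from Lemma \ref{lem:sl2str}, using the criticality criteria of Lemma \ref{lem:critFK} and the twisted-idempotent characterization of Lemma \ref{thm:idem}. Put $\beta = j\hv$, and work in the basis $z_{1}, z_{2}, z_{3}$ of $\subs_{0}$ with $[z_{a}, z_{b}] = z_{c}$ cyclically and $B(z_{a}, z_{b}) = -\beta\delta_{ab}$. Since $\tau z_{a} = z_{a}$, we have $H(z_{a}, z_{b}) = \beta\delta_{ab}$.

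For \ref{sl2crit3}, note that for $e = \tfrac12(z_1,z_2,z_3)$ Lemma \ref{lem:nahmbasic}\ref{nahm2} gives
\[
(e\tmlt e)_i = 2[e_j,e_k] = \tfrac12 [z_j,z_k] = \tfrac12 z_i = e_i = \tau e_i
\]
for $(i,j,k)$ cyclic, so $e \in \idem_{\tau}(\g^{3}, \tmlt)$. Then $H(e,e) = 3\cdot\tfrac14\beta = \tfrac34 j\hv$, and \eqref{eq:idemnorms} gives $W(e) = 3/(4H(e,e)) = 1/(j\hv)$.

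For \ref{sl2crit2}, I would verify \eqref{eq:F} directly. Since $\subs$ is $\tau$-stable, $\qad(y) = -\ad(\tau y)\ad(y)$ preserves $\subs$, and its restriction to $\subs$ can be computed in coordinates. Identify $\subs$ with $\com^{3}$ via the $z_a$. The bracket becomes the bilinear cross product, $\tau$ becomes complex conjugation, and $H$ becomes $\beta$ times the standard Hermitian product. For $x, y \in \com^3$ this gives
\[
\qad(y)x = -\bar y\times(y\times x) = (\bar y\cdot y)\,x - (\bar y\cdot x)\,y,
\]
by the BAC–CAB identity. When $H(x,y)=0$, i.e.\ $\bar y\cdot x = 0$, this reduces to $\qad(y)x = |y|^2 x = \beta^{-1}\norm{y}^2 x$, and symmetrically for $\qad(x)y$. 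By \eqref{eq:sl2norm}, $F(x,y) = 1/\beta$ for orthogonal $x,y$. So \eqref{eq:F} holds with $F(x,y)=1/(j\hv)$, and $F(x,y) \neq 0$ puts the pair in $\crtnt(F)$.

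For \ref{sl2crit1}, every plane $\sigma\subset\subs$ has an $H$-orthogonal basis $x,y$. By \ref{sl2crit2} this pair is a critical point of $F$, so Lemma \ref{lem:critFK}\ref{corr3} makes it a critical point of $\sect$ and $\sigma$ a critical point on $\grass(2,\g)$. The value $1/(j\hv)$ is \eqref{eq:sectconstant}.

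The final sentence follows because Lemma \ref{lem:lower}\ref{lower1} conjugates any $\sll(2,\com)$ subalgebra, by an element of the adjoint group, to a $\tau$-stable one. This conjugation preserves the Dynkin index, so the critical values just found transfer. The only point needing care is the sign bookkeeping in the double cross product, i.e.\ that $\qad(y)$ restricted to $\subs$ really is $\beta^{-1}(\norm{y}^2 - y H(\cdot,y))$. I would double-check it on the basis vectors: $\qad(z_1)z_2 = -[z_1,[z_1,z_2]] = -[z_1,z_3] = z_2$, which is consistent with the formula.
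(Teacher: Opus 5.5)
Your proposal is correct and follows the paper's proof essentially step for step. It uses the same cross-product coordinates on $\subs$ with the identity $a\times(b\times c)=b(a\cdot c)-c(a\cdot b)$ to verify \eqref{eq:F} for (1), Lemma \ref{lem:critFK}\ref{corr3} together with an $H$-orthogonal basis for (2), and the direct check $e\tmlt e=\tfrac12(z_1,z_2,z_3)=\tau e$ followed by \eqref{eq:idemnorms} for (3). (As a small aside, your unreduced formula $\qad(y)x=\beta^{-1}\big(\norm{y}^{2}x-H(x,y)y\big)$ with $\beta=j\hv$ already verifies \eqref{eq:K} for every linearly independent pair in $\subs$, so (2) does not even need the orthogonal-basis step.)
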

\begin{proof}
Let $\beta = j\hv$. For \ref{sl2crit2}, use the coordinates of the proof of Lemma \ref{lem:sl2str}\ref{sl2b}, in which $\tau$ acts by complex conjugation of coefficient vectors because the $z_{a}$ lie in $\gz$. Since $H(x, y) = \beta\,x\cdot\bar{y} = 0$, the identity $a \times (b \times c) = b(a \cdot c) - c(a \cdot b)$ gives
\begin{align}\label{eq:triplecross}
\bar{y}\times(x \times y) = (\bar{y}\cdot y)\,x, \qquad -\bar{x}\times(x \times y) = (\bar{x}\cdot x)\,y,
\end{align}
and these are the coefficient vectors of $\qad(y)x = [\tau y, [x, y]]$ and $\qad(x)y = -[\tau x, [x, y]]$. Since $\bar{y}\cdot y = \norm{y}^{2}/\beta$ and $F(x, y) = 1/\beta$ by \eqref{eq:sl2norm}, \eqref{eq:triplecross} is \eqref{eq:F}. Claim \ref{sl2crit1} follows from \ref{sl2crit2} and Lemma \ref{lem:critFK}\ref{corr3}, since every plane in $\subs$ has an $H$-orthogonal basis. For \ref{sl2crit3}, let $z := (z_{1}, z_{2}, z_{3})$. By \eqref{eq:xirelations} and Lemma \ref{lem:nahmbasic}\ref{nahm2}, $z \tmlt z = 2z$, and $\tau z = z$ because the $z_{a}$ lie in $\gz$, so $e = \tfrac{1}{2}z$ satisfies $e \tmlt e = \tfrac{1}{2}z = \tau e$. Since $\norm{e_{a}}^{2} = \beta/4$ for $a \in \{1, 2, 3\}$, \eqref{eq:idemnorms} gives the values.
\end{proof}

The Dynkin indices of the subalgebras of $\g$ isomorphic to $\sll(2, \com)$ are tabulated by Dynkin \cite{Dynkin-semisimplesubalgebras}; for the principal subalgebra of $\sll(n, \com)$ the index is $\binom{n+1}{3}$. In general these subalgebras correspond to the nilpotent orbits of $\g$ \cite{Collingwood-McGovern}.

\begin{remark}\label{rem:meshcheryakov}
For planes in $\gz$, Lemma \ref{thm:sl2crit}\ref{sl2crit1} and its converse are in \cite[Proposition~1 and Theorems~1, 2, and~4]{Meshcheryakov}: the critical points of the sectional curvature of $(G_{0}, -B)$ with nonzero critical value are the planes contained in subalgebras of $\gz$ isomorphic to $\su(2)$, and the critical value on such a plane is $1/(4j\hv)$, where $j$ is the Dynkin index of the complexification of the subalgebra. The converse follows from Lemma \ref{lem:critFK}. For $x, y \in \gz$ both sides of \eqref{eq:K} lie in $\gz$, and $\re H$ restricts to $-B$ on $\gz$, so a pair $(x, y) \in \gz^{2}$ orthonormal with respect to $-B$ is a critical point of the restriction of $\sect$ to $\stiefel(\gz)$ if and only if it satisfies \eqref{eq:K}, which, since $\tau$ fixes $x$ and $y$, reads $[y, z] = \sect(x, y)x$ and $[z, x] = \sect(x, y)y$ with $z := [x, y]$; these are the criticality conditions of \cite[Proposition~1]{Meshcheryakov}. If $\sect(x, y) \neq 0$, they show that $x$, $y$, and $z$ span a subalgebra of $\gz$ isomorphic to $\su(2)$, whose complexification is $\tau$-stable, so $\sect(x, y) = 1/(j\hv)$ by \eqref{eq:sl2norm}. In \cite[Theorem~4]{Meshcheryakov} the critical value is written $3/\sum_{s=1}^{m}n_{s}(n_{s}+1)(n_{s}+2)$, where $n_{1}+1, \dots, n_{m}+1$ are the dimensions of the irreducible summands of $\gz$ under the subalgebra; this agrees with $1/(4j\hv)$ because, by \eqref{eq:indexweights}, the Dynkin index of the irreducible $(n+1)$-dimensional representation of $\sll(2, \com)$ is $n(n+1)(n+2)/6$, so $\sum_{s=1}^{m}n_{s}(n_{s}+1)(n_{s}+2)$ is $6$ times the Dynkin index $2j\hv$ of the restriction of the adjoint representation of $\g$ to the complexified subalgebra.

For complex planes this description fails. If $\mu_{1}, \mu_{2}, \mu_{3} \in \Phi$ are long roots with $\mu_{1} + \mu_{2} + \mu_{3} = 0$ and $0 \neq e_{a} \in \g_{\mu_{a}}$ for $a \in \{1, 2, 3\}$, then $H(e_{1}, e_{2}) = 0$, and by Lemmas \ref{thm:roottriangle} and \ref{lem:critFK}\ref{corr2},\ref{corr3} the plane $\spn_{\com}\{e_{1}, e_{2}\}$ is a critical point of $\sect$ on $\grass(2, \g)$ with critical value $1/(2\hv)$; but $[e_{a}, [e_{1}, e_{2}]] \in \g_{\mu_{a} - \mu_{3}} = 0$ for $a \in \{1, 2\}$, so $e_{1}$, $e_{2}$, and $[e_{1}, e_{2}]$ span a nilpotent subalgebra, and the plane lies in no subalgebra isomorphic to $\sll(2, \com)$.
\end{remark}

Hitchin \cite[Theorem~1]{Hitchin-calibrations} showed that the normalized Cartan $3$-form $\phi$ of \eqref{eq:tasakiform} defined by Tasaki, viewed as a function on the Grassmannian $\widetilde{\grass}(3, \gz)$ of oriented three-dimensional subspaces, has a critical point at a principal three-dimensional subalgebra. For $\sigma \in \widetilde{\grass}(3, \gz)$ write $\om(\sigma)$ for the value of $\om$ on a positively oriented basis of $\sigma$ orthonormal with respect to $-B$. Since $\phi = -(\hv)^{1/2}\om$ on $\gz$, the functions $\phi$ and $\om$ have the same critical points on $\widetilde{\grass}(3, \gz)$. Corollary \ref{cor:hitchin} extends Hitchin's observation to all subalgebras isomorphic to $\su(2)$ and computes the critical values; it is the counterpart for the Cartan $3$-form of Meshcheryakov's result for the sectional curvature (Remark \ref{rem:meshcheryakov}).

\begin{corollary}\label{cor:hitchin}
Let $\subs_{0} \subset \gz$ be a subalgebra isomorphic to $\su(2)$ whose complexification has Dynkin index $j$. Then $\subs_{0}$, with either orientation, is a critical point of $\om$ on $\widetilde{\grass}(3, \gz)$, and $|\om(\subs_{0})| = (j\hv)^{-1/2}$.
\end{corollary}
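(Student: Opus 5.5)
The plan is to reduce criticality on $\widetilde{\grass}(3, \gz)$ to criticality of $W$ on $\gz^{3}$, and then use the $\tau$-twisted idempotent of Lemma \ref{thm:sl2crit}\ref{sl2crit3}.

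First the reduction. For $x, y, z \in \gz \setminus \{0\}$, $\om(x, y, z)$ is real, so $W(x, y, z) = \om(x, y, z)^{2}/(\norm{x}^{2}\norm{y}^{2}\norm{z}^{2})$. For linearly independent $x, y, z$ one has $\om(x, y, z) = \om(\sigma)\,\mathrm{vol}(x, y, z)$ with $\sigma = \spn_{\rea}\{x, y, z\}$, and Hadamard's inequality gives $\mathrm{vol}(x, y, z) \leq \norm{x}\norm{y}\norm{z}$, with equality exactly for orthogonal triples. Hence $W \leq \om(\sigma)^{2}$ on bases of $\sigma$, with equality on orthogonal bases. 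Consequently, if the restriction $W|_{\gz^{3}}$ is critical at an orthogonal triple $e$ spanning $\subs_{0}$, then $\om^{2}$ is critical on $\widetilde{\grass}(3, \gz)$ at $\subs_{0}$.

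To make this precise, I would take a curve $\sigma(t)$ with $\sigma(0) = \subs_{0}$ and lift it to a curve of orthonormal frames $e(t)$ via Gram--Schmidt. Then $W(e(t)) = \om(\sigma(t))^{2}$, so $\tfrac{d}{dt}\om(\sigma(t))^{2}|_{t=0} = dW_{e}(e'(0)) = 0$. Since $\om(\subs_{0}) \neq 0$ (it will be computed below), $\om$ itself is critical at $\subs_{0}$, and this holds for either orientation, because reversing orientation only changes the sign.

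Next comes the critical point of $W$. Lemma \ref{lem:sl2str}\ref{sl2a}, applied to $\subs = \subs_{0} \oplus i\subs_{0}$ (which is $\tau$-stable), gives a basis $z_{1}, z_{2}, z_{3}$ of $\subs_{0}$ with $B(z_{a}, z_{b}) = -j\hv\,\delta_{ab}$. Lemma \ref{thm:sl2crit}\ref{sl2crit3} then says that $e = \tfrac{1}{2}(z_{1}, z_{2}, z_{3})$ is a $\tau$-twisted idempotent. By Lemma \ref{thm:idem}\ref{idem1}, $e$ is a critical point of $W$ on $\g^{3}$, hence of its restriction to $\gz^{3}$, with $W(e) = 1/(j\hv)$. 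The components of $e$ are orthogonal and form a basis of $\subs_{0}$, so the reduction above applies, and $\om(\subs_{0})^{2} = W(e) = 1/(j\hv)$.

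The value can also be checked directly: with $u_{a} = z_{a}/(j\hv)^{1/2}$ orthonormal, $\om(u_{1}, u_{2}, u_{3}) = B([z_{1}, z_{2}], z_{3})/(j\hv)^{3/2} = -(j\hv)^{-1/2}$. This gives $|\om(\subs_{0})| = (j\hv)^{-1/2}$.

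The only delicate step is the frame-lifting argument, that is, the claim that the orthonormal-frame bundle over $\widetilde{\grass}(3, \gz)$ is a submersion along which $W$ agrees with $\om^{2}$. This is standard, since Gram--Schmidt is smooth, so I do not expect a real obstacle.
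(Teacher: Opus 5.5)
Your proposal is correct and follows the paper's proof essentially step for step: the paper takes the orthonormal frame $X = (j\hv)^{-1/2}(z_{1}, z_{2}, z_{3})$, observes that it is a rescaling of the $\tau$-twisted idempotent of Lemma \ref{thm:sl2crit}\ref{sl2crit3} and hence a critical point of $W$ and of its restriction to orthonormal $3$-frames in $\gz$ (where $W = \om^{2}$), and then pulls criticality back along the submersion from frames to $\widetilde{\grass}(3, \gz)$, which is exactly your Gram--Schmidt lifting. The only detail to tidy is that your lifted curve of orthonormal frames should start at this rescaled frame $X$, which is critical by the rescaling invariance of $W$, rather than at the non-normalized $e = \tfrac{1}{2}(z_{1}, z_{2}, z_{3})$; the Hadamard-inequality digression is harmless but not needed.
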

\begin{proof}
The complexification $\subs = \subs_{0} \oplus i\subs_{0}$ is $\tau$-stable, so Lemma \ref{lem:sl2str}\ref{sl2a} applies: a triple $z_{1}, z_{2}, z_{3}$ as there is a basis of $\subs_{0}$, orthogonal with respect to $-B$ with $-B(z_{a}, z_{a}) = j\hv$ for $a \in \{1, 2, 3\}$. Hence $X = (j\hv)^{-1/2}(z_{1}, z_{2}, z_{3})$ is an orthonormal $3$-frame in $\gz$ and a rescaling of the $\tau$-twisted idempotent of Lemma \ref{thm:sl2crit}\ref{sl2crit3}, hence a critical point of $W$ on $\g^{3}$, and so also of the restriction of $W$ to the manifold $\mathcal{V}$ of orthonormal $3$-frames in $\gz$. Since $\om$ is real on $\gz$, $W = \om^{2}$ on $\mathcal{V}$, and $\om^{2}$ on $\mathcal{V}$ is the pullback of $\om^{2}$ on $\widetilde{\grass}(3, \gz)$ via the submersion $\mathcal{V} \to \widetilde{\grass}(3, \gz)$ sending a frame to its oriented span. Hence $\om^{2}$, and so $\om$ (which is nonzero there), is critical at $\subs_{0}$, and $\om(\subs_{0})^{2} = W(X) = 1/(j\hv)$ by Lemma \ref{thm:sl2crit}\ref{sl2crit3}.
\end{proof}
For Tasaki's calibration $\phi = -(\hv)^{1/2}\om$, the critical value in Corollary \ref{cor:hitchin} is $|\phi(\subs_{0})| = j^{-1/2}$, to be compared with the comass $1$ of $\phi$. For the principal subalgebra of $\su(n)$, $j = \binom{n+1}{3}$, and for that of a compact Lie algebra of type $G_{2}$, $j = 28$ (Example \ref{ex:g2}); the critical values of $\phi$ at principal subalgebras are thus far from the maximal value $1$.

Not all critical points of $W$ are of the type described in Lemma \ref{thm:sl2crit}. For $\mu \in \Phi$ let $t_{\mu} \in \h$ be as in \ref{R2}.
\begin{lemma}\label{thm:roottriangle}
Let $\mu_{1}, \mu_{2}, \mu_{3} \in \Phi$ satisfy $\mu_{1} + \mu_{2} + \mu_{3} = 0$, and let $0 \neq e_{a} \in \g_{\mu_{a}}$ for $a \in \{1, 2, 3\}$.
\begin{enumerate}[label=(\arabic*), ref=(\arabic*)]
\item\label{tri1} $(e_{1}, e_{2}, e_{3}) \in \crtnt(W)$.
\item\label{tri2} If $\mu_{a} - \mu_{b} \notin \Phi$ for all $a \neq b$, then the $\mu_{a}$ have a common length, $\subs := \spn_{\com}\{t_{\mu_{1}}, t_{\mu_{2}}\} \oplus \bigoplus_{a=1}^{3}(\g_{\mu_{a}} \oplus \g_{-\mu_{a}})$ is a $\tau$-stable subalgebra isomorphic to $\sll(3, \com)$ and having Dynkin index $j = 2/(\mu_{1}, \mu_{1})$, and
\begin{align}\label{eq:trianglevalue}
W(e_{1}, e_{2}, e_{3}) = \frac{(\mu_{1}, \mu_{1})}{4\hv} = \frac{1}{2j\hv}.
\end{align}
\item\label{tri3} The hypothesis of \ref{tri2} holds if the $\mu_{a}$ are long, and then $W(e_{1}, e_{2}, e_{3}) = 1/(2\hv)$. If the $\mu_{a}$ have a common length, the hypothesis of \ref{tri2} fails only for short roots of $G_{2}$, for which $\mu_{a} - \mu_{b}$ is a long root.
\end{enumerate}
\end{lemma}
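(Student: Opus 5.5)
For \ref{tri1} I would use the criterion of Lemma \ref{lem:critW}: it suffices to show $C(e)\neq 0$ and $[e_{j},e_{k}]\in\com^{\ast}\tau e_{i}$ for $(i,j,k)$ cyclic. Since $\mu_{j}+\mu_{k}=-\mu_{i}$ is a root, $[\g_{\mu_{j}},\g_{\mu_{k}}]=\g_{-\mu_{i}}$ is one-dimensional by \ref{R2}. I would invoke the standard fact that $[\g_{\al},\g_{\beta}]=\g_{\al+\beta}$ whenever $\al,\beta,\al+\beta\in\Phi$, which gives $[e_{j},e_{k}]\neq0$. By \eqref{eq:tauroot}, $\tau e_{i}$ spans $\g_{-\mu_{i}}$, so $[e_{j},e_{k}]=c_{i}\tau e_{i}$ with $c_{i}\neq0$. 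Then $C(e)=B([e_{1},e_{2}],e_{3})=c_{3}B(\tau e_{3},e_{3})=-c_{3}\norm{e_{3}}^{2}\neq0$, and Lemma \ref{lem:critW} yields $e\in\crtnt(W)$.

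For \ref{tri2} I would first show the common length. The $\mu_{a}$-string through $\mu_{b}$ contains $\mu_{b}+\mu_{a}=-\mu_{c}$ but not $\mu_{b}-\mu_{a}$. Hence $\langle\mu_{b},\mu_{a}^{\vee}\rangle=-q$, where $q\geq1$ is the upward string length, which is $-1$ in rank-two root systems away from the $G_{2}$ short/long situation. The cleaner route is to note that, by \ref{R2} and \eqref{eq:Bnorm}, $(\mu_{1}+\mu_{2},\mu_{1}+\mu_{2})=(\mu_{3},\mu_{3})$, together with $\langle\mu_{a},\mu_{b}^{\vee}\rangle\leq -1$ for all $a\neq b$. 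I would sum the identities $2(\mu_{a},\mu_{b})=(\mu_{c},\mu_{c})-(\mu_{a},\mu_{a})-(\mu_{b},\mu_{b})$. Writing $\ell_{a}=(\mu_{a},\mu_{a})$, the conditions $(\mu_{a},\mu_{b})\leq-\ell_{b}/2$ and $(\mu_{a},\mu_{b})\leq-\ell_{a}/2$ combine to give $\ell_{c}\leq \ell_{a}+\ell_{b}-\max(\ell_{a},\ell_{b})=\min(\ell_{a},\ell_{b})$ for every $c$. This forces all $\ell$ equal.

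Next I would show that $\subs$ is the root subalgebra of the rank-two system $\{\pm\mu_{a}\}$ of type $A_{2}$. The $\mu_{a}$ pairwise meet at $120^{\circ}$, and closure under brackets holds because $\mu_{a}-\mu_{b}\notin\Phi$ and $2\mu_{a}\notin\Phi$. It is therefore isomorphic to $\sll(3,\com)$, and it is $\tau$-stable by \eqref{eq:tauroot} together with $\tau t_{\mu}=-t_{\mu}$. Its long roots are the $\mu_{a}$, so its normalized form pulls back to the $\g$-form scaled by $2/\ell_{1}$, giving $j=2/(\mu_{1},\mu_{1})$. I would compute $W$ through Lemma \ref{lem:critFK}\ref{corr2}, which gives $W=F(e_{1},e_{2})$. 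I would take $f=-\tau e_{1}$ normalized, so that $[e_{1},f]=B(e_{1},f)t_{\mu_{1}}$, and then use $\qad(e_{1})e_{2}=[\tau e_{1},[e_{2},e_{1}]]$ with the $\sll(2)$-representation on the $\mu_{1}$-string through $\mu_{2}$. Alternatively, and more simply, I would note that $e_{1},e_{2}$ lie in the copy of $\sll(3,\com)$ and use Corollary \ref{cor:matrix}-type Frobenius computations via Theorem \ref{thm:B}'s identity \eqref{eq:frobeniusratio}. With $E_{12},E_{23}$ in $\sll(3)$ one has $\fnorm{[X,Y]}^{2}/(\fnorm X^{2}\fnorm Y^{2})=1$, so $F=1/(2\hv_{\sll(3)})$ in the intrinsic normalization. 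Then I would transfer to $\g$ by the index: $H_{\g}|_{\subs}=(j\hv/\hv_{\subs})H_{\subs}$ with $\hv_{\subs}=3$, giving $F=1/(2j\hv)$, which equals $(\mu_{1},\mu_{1})/(4\hv)$.

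For \ref{tri3}, long roots have inner products in $\{0,\pm1,\pm2\}$. A difference $\mu_{a}-\mu_{b}$ would then have squared length $2+2+2=6>2$, since $(\mu_{a},\mu_{b})=-1$ from $\ell_{c}=2$, so it is not a root. In the equal-length non-long case only $B_{n},C_{n},F_{4},G_{2}$ occur; by a rank-two reduction, the span of $\mu_{1},\mu_{2}$ is of type $A_{2}$, $B_{2}$ or $G_{2}$, and $B_{2}$ has no three equal-length roots summing to zero with pairwise $120^{\circ}$. This leaves $G_{2}$ short roots, where the difference is long. I expect the main obstacle to be the careful normalization bookkeeping in the value \eqref{eq:trianglevalue}.
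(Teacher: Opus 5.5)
Your proposal is correct. Parts (1) and (3) follow the paper; for (3) the paper simply observes that a common squared length $\ell$ gives $(\mu_a,\mu_b)=-\ell/2$, so $\mu_a-\mu_b$ has squared length $3\ell$, which is a root length only for short roots of $G_2$. Your rank-two case analysis reaches the same conclusion less directly.

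In (2) your route genuinely differs.

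\textbf{Paper's route.} The paper uses the full string: the $\mu_a$-string through $\mu_b$ is exactly $\mu_b,\mu_b+\mu_a$, using also $\mu_b+2\mu_a=\mu_a-\mu_c\notin\Phi$. From this it computes $\qad(e_a)e_b=\frac{(\mu_a,\mu_a)\norm{e_a}^2}{4\hv}e_b$, hence $F(e_a,e_b)=(\mu_a,\mu_a)/(4\hv)$. The common length then comes for free from the symmetry $F(e_a,e_b)=F(e_b,e_a)$ together with Lemma \ref{lem:critFK}\ref{corr2}. Only afterwards is $\subs\simeq\sll(3,\com)$ identified: $\tau$-stable, hence reductive, with trivial center and dimension $8$.

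\textbf{Common length.} Your argument uses only the bottom of the string, $\langle\mu_b,\mu_a^\vee\rangle\le-1$, plus the law of cosines to get $\ell_c\le\min(\ell_a,\ell_b)$ for each $c$. It is purely root-theoretic and independent of $F$ and $W$, which is a clean alternative.

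\textbf{Critical value.} Your first sketch of the $\qad$ computation is the paper's. Your ``simpler'' transfer of $F(E_{12},E_{23})=1/6$ from $\sll(3,\com)$, via $H|_{\subs}=(j\hv/3)H_{\subs}$, is also correct, but it needs two facts you did not state:
\begin{enumerate}
\item that $\spn_{\com}\{t_{\mu_1},t_{\mu_2}\}\oplus\bigoplus_a(\g_{\mu_a}\oplus\g_{-\mu_a})$ is semisimple, so the closed subsystem $\{\pm\mu_a\}$ really yields $\sll(3,\com)$;
\item that the isomorphism $\subs\simeq\sll(3,\com)$ can be chosen to carry $\tau|_{\subs}$ to $X\mapsto-X^{\dagger}$ and the Cartan subalgebra $\spn_{\com}\{t_{\mu_1},t_{\mu_2}\}$ to the diagonal one (conjugacy of compact real forms and of their maximal tori). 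Only then do $e_1,e_2$ become multiples of $E_{12},E_{23}$ up to relabelling.
\end{enumerate}
The paper's direct computation avoids this bookkeeping, produces the value and the common length in one step, and does not depend on first knowing $\subs\simeq\sll(3,\com)$ or its Dynkin index.
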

\begin{proof}
For $(a, b, c)$ a cyclic permutation of $(1, 2, 3)$, $\mu_{b} + \mu_{c} = -\mu_{a}$ is a root, so $[\g_{\mu_{b}}, \g_{\mu_{c}}] = \g_{-\mu_{a}}$ \cite[Proposition~8.4]{Humphreys}, which equals $\com\,\tau e_{a}$ by \eqref{eq:tauroot} and \ref{R2}. Moreover $C(e_{1}, e_{2}, e_{3}) = B([e_{1}, e_{2}], e_{3}) \neq 0$, because $[e_{1}, e_{2}]$ spans $\g_{-\mu_{3}}$ and $B$ pairs $\g_{-\mu_{3}}$ and $\g_{\mu_{3}}$ nondegenerately. Lemma \ref{lem:critW} shows $(e_{1}, e_{2}, e_{3}) \in \crtnt(W)$, which is \ref{tri1}.

For \ref{tri3}, if the $\mu_{a}$ are long and $a \neq b$, then $(\mu_{a} + \mu_{b}, \mu_{a} + \mu_{b}) \leq 2$ forces $(\mu_{a}, \mu_{b}) \leq -1$, so $(\mu_{a} - \mu_{b}, \mu_{a} - \mu_{b}) \geq 6$ and $\mu_{a} - \mu_{b} \notin \Phi$ by \ref{R1}. If the $\mu_{a}$ have a common squared length $\ell$, then $(\mu_{a}, \mu_{b}) = -\ell/2$ for $a \neq b$, because $\mu_{a} + \mu_{b} = -\mu_{c}$, so $(\mu_{a} - \mu_{b}, \mu_{a} - \mu_{b}) = 3\ell$, which is the squared length of a root only for short roots of $G_{2}$. The value in \ref{tri3} follows from \eqref{eq:trianglevalue} with $(\mu_{1}, \mu_{1}) = 2$, once \ref{tri2} is proved.

For \ref{tri2}, suppose $\mu_{a} - \mu_{b} \notin \Phi$ for $a \neq b$, and let $(a, b, c)$ be a cyclic permutation of $(1, 2, 3)$. The $\mu_{a}$-string through $\mu_{b}$ is $\mu_{b}, \mu_{b} + \mu_{a}$, because $\mu_{b} - \mu_{a} \notin \Phi$ and $\mu_{b} + 2\mu_{a} = \mu_{a} - \mu_{c} \notin \Phi$, so $\mu_{b}(\mu_{a}^{\vee}) = -1$ \cite[Proposition~8.4]{Humphreys}. By \ref{R2} and $t_{\mu_{a}} = (\mu_{a}, \mu_{a})\mu_{a}^{\vee}/(4\hv)$, the elements $X_{+} := e_{a}$ and $X_{-} := -4\hv\,\tau e_{a}/((\mu_{a}, \mu_{a})\norm{e_{a}}^{2})$ satisfy $[X_{+}, X_{-}] = \mu_{a}^{\vee}$. Since $[X_{-}, e_{b}] \in \g_{\mu_{b} - \mu_{a}} = 0$ and $[\mu_{a}^{\vee}, e_{b}] = -e_{b}$, the Jacobi identity gives $[X_{-}, [X_{+}, e_{b}]] = -[\mu_{a}^{\vee}, e_{b}] = e_{b}$, that is,
\begin{align}\label{eq:triangleQ}
\qad(e_{a})e_{b} = -[\tau e_{a}, [e_{a}, e_{b}]] = \frac{(\mu_{a}, \mu_{a})\norm{e_{a}}^{2}}{4\hv}\,e_{b}.
\end{align}
By \eqref{eq:triangleQ}, $F(e_{a}, e_{b}) = H(\qad(e_{a})e_{b}, e_{b})/(\norm{e_{a}}^{2}\norm{e_{b}}^{2}) = (\mu_{a}, \mu_{a})/(4\hv)$. By Lemma \ref{lem:critFK}\ref{corr2} and the symmetry of $F$, $F(e_{a}, e_{b}) = W(e_{1}, e_{2}, e_{3})$ for all $a \neq b$, so the $\mu_{a}$ have a common length and $W(e_{1}, e_{2}, e_{3}) = (\mu_{1}, \mu_{1})/(4\hv)$.

The space $\subs$ contains $t_{\mu_{3}} = -t_{\mu_{1}} - t_{\mu_{2}}$, and it is closed under brackets because $[\g_{\mu_{a}}, \g_{\mu_{b}}] = \g_{-\mu_{c}}$, $[\g_{-\mu_{a}}, \g_{-\mu_{b}}] = \g_{\mu_{c}}$, $[\g_{\mu_{a}}, \g_{-\mu_{b}}] \subset \g_{\mu_{a} - \mu_{b}} = 0$ for $a \neq b$, and $[\g_{\mu_{a}}, \g_{-\mu_{a}}] = \com t_{\mu_{a}}$ by \ref{R2}. It is $\tau$-stable by \eqref{eq:tauroot}, because $t_{\mu} \in i\mathfrak{t}_{0}$ satisfies $\tau t_{\mu} = -t_{\mu}$; hence, as in the proof of Lemma \ref{thm:idem}\ref{idem3}, $\subs$ is reductive. Its center is trivial, because $B$ is nondegenerate on $\spn_{\com}\{t_{\mu_{1}}, t_{\mu_{2}}\}$, so no nonzero element of this space is annihilated by $\mu_{1}$ and $\mu_{2}$. Thus $\subs$ is semisimple of dimension $8$ with the $2$-dimensional Cartan subalgebra $\spn_{\com}\{t_{\mu_{1}}, t_{\mu_{2}}\}$, so $\subs \simeq \sll(3, \com)$, all of whose roots are long. By \eqref{eq:dynkinpullback}, $(\dum, \dum)$ restricts to $j(\dum, \dum)_{\subs}$ on $\subs$, so the induced forms on the duals of the Cartan subalgebras satisfy $(\mu_{1}, \mu_{1}) = (\mu_{1}, \mu_{1})_{\subs}/j = 2/j$, which gives the second equality in \eqref{eq:trianglevalue}.
\end{proof}

Let $\subs\subset \g$ be as in Lemma \ref{thm:sl2crit}, a $\tau$-stable subalgebra isomorphic to $\sll(2, \com)$ and having Dynkin index $j$. Because $\subs$ is $\tau$-stable, the $H$-orthogonal complement $\subs^{\perp}$ of $\subs$ equals its $B$-orthogonal complement. Consequently $\subs^{\perp}$ is $\ad(\subs)$-invariant, and it decomposes as a sum of irreducible $\subs$-modules. For $d \in \integer_{\geq 0}$ let $V(d)$ be the irreducible $\subs$-module with highest weight $d$, that is, of dimension $d+1$, on which the neutral element $h$ of an $\sll(2, \com)$-triple spanning $\subs$ has eigenvalues $d, d-2, \dots, -d$, and write
\begin{align}\label{eq:sperpdecomp}
\subs^{\perp} \simeq \bigoplus_{d \in \integer_{\geq 0}}n_{d}V(d).
\end{align}
Write $\mathcal{P} := \proj(\g)^{3}$, let $\pi_{i}:\mathcal{P} \to \proj(\g)$, $i \in \{1, 2, 3\}$, be the projections onto the factors, let $\mathcal{X} := \{[a] \in \mathcal{P}: C(a) \neq 0\}$, and let $D := 3(\dim_{\com}\g - 1) = \dim_{\com}\mathcal{P}$. Since $W$ is $(\com^{\ast})^{3}$-invariant, it is a function on $\mathcal{P}$, positive exactly on $\mathcal{X}$, whose critical points in $\mathcal{X}$ are the images of the points of $\crtnt(W)$. At the image $[e]$ of a $\tau$-twisted idempotent $e$, the tangent space $T_{[e]}\mathcal{P}$ is identified, as a complex vector space, with $\Uperp_{e}$, and by Lemma \ref{thm:hessW} the Hessian of $W$ on $\mathcal{P}$ at $[e]$ is the form \eqref{eq:hessR} on $\Uperp_{e}$. The \emph{coindex}, \emph{nullity}, and \emph{index} of a critical point of $W$ in $\mathcal{X}$ are the numbers of positive, zero, and negative eigenvalues of this form, counted with real multiplicity; their sum is $2D$.

\begin{lemma}\label{lem:indexbound}
Let $e \in \idem_{\tau}(\g^{3}, \tmlt)$, and let $\mu$ run over the eigenvalues of $\tau\lin(e)$ on $\Uperp_{e}$, counted with real multiplicity. Then the coindex, nullity, and index of $[e]$ are $\#\{\mu > \tfrac{1}{2}\}$, $\#\{\mu = \tfrac{1}{2}\}$, and $\#\{\mu < \tfrac{1}{2}\} \geq D$, with equality in the last if and only if $\nnorm{\tau\lin(e)b} \geq \tfrac{1}{2}\nnorm{b}$ for all $b \in \Uperp_{e}$. In particular, no critical point of $W$ at which $W \neq 0$ is a local minimum.
\end{lemma}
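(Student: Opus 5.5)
The plan is to read everything off the formula \eqref{eq:hessR} of Lemma \ref{thm:hessW}, the spectral symmetry coming from the anticommutation of $\tau\lin(e)$ with $J$, and the positive prefactor $16W(e)^{2}$.

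First, recall from the discussion preceding the statement that $T_{[e]}\mathcal{P}$ is identified with $\Uperp_{e}$, a real vector space of dimension $2D$, and that on it the Hessian is $16W(e)^{2}\re H(b,(\tau\lin(e)-\tfrac12)b)$. By Lemma \ref{hess2}, $\tau\lin(e)$ is $\re H$-self-adjoint and preserves $\Uperp_{e}$, so its restriction diagonalizes in a $\re H$-orthonormal basis. Since $W(e)>0$, the signature of the Hessian is the signature of $\tau\lin(e)-\tfrac12$. This gives coindex $=\#\{\mu>\tfrac12\}$, nullity $=\#\{\mu=\tfrac12\}$, and index $=\#\{\mu<\tfrac12\}$.

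Next, because $\tau\lin(e)$ anticommutes with $J$ and $J\Uperp_{e}=\Uperp_{e}$, $J$ maps the $\mu$-eigenspace isomorphically onto the $(-\mu)$-eigenspace. So the spectrum on $\Uperp_{e}$ is symmetric, with equal real multiplicities for $\mu$ and $-\mu$. Split the $2D$ eigenvalues into three groups: those with $|\mu|<\tfrac12$, those with $\mu=\pm\tfrac12$, and those with $|\mu|>\tfrac12$.

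\begin{itemize}
\item For $|\mu|<\tfrac12$: all of these are $<\tfrac12$.
\item For $\mu=\pm\tfrac12$: half of them, the $-\tfrac12$ ones, are $<\tfrac12$.
\item For $|\mu|>\tfrac12$: half of them, the negative ones, are $<\tfrac12$.
\end{itemize}

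Hence $\#\{\mu<\tfrac12\}=n_{<}+\tfrac12 n_{=}+\tfrac12 n_{>}\ge \tfrac12(n_{<}+n_{=}+n_{>})=D$. Equality holds iff $n_{<}=0$, that is, iff every eigenvalue has $|\mu|\ge\tfrac12$. For a self-adjoint operator this is equivalent to $\nnorm{\tau\lin(e)b}\ge\tfrac12\nnorm{b}$ for all $b\in\Uperp_{e}$ (expand $b$ in an orthonormal eigenbasis).

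Finally, a local minimum would require index $0$. Since $D=3(\dim\g-1)>0$, the index is at least $D>0$, which rules this out. By Lemma \ref{thm:idem}\ref{idem1} every critical point in $\mathcal{X}$ is the image of some $[e]$ with $e\in\idem_{\tau}(\g^{3},\tmlt)$, so the conclusion holds for all critical points with $W\neq0$.

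The only delicate point is bookkeeping. The identification of $T_{[e]}\mathcal{P}$ with $\Uperp_{e}$ and the fact that the Hessian on $\mathcal{P}$ equals \eqref{eq:hessR} are already recorded before the statement (using the last claim of Lemma \ref{thm:hessW}), so no step is expected to be a real obstacle.
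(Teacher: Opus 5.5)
Your proof is correct and follows essentially the paper's argument: the counts come from \eqref{eq:hessR} together with the $\re H$-self-adjointness of $\tau\lin(e)$, and the bound with its equality case comes from the pairing of eigenvalues $\pm\mu$ via $J$. The only difference is that the paper first obtains $\operatorname{ind} \geq D$ separately from \eqref{eq:hessJ}: positive semidefiniteness on a subspace $V$ forces negative definiteness on $JV$, applied to a maximal such $V$. It uses the eigenvalue pairing only for the equality statement, whereas your single count $\operatorname{ind} = n_{<} + \tfrac{1}{2}n_{=} + \tfrac{1}{2}n_{>} = D + \tfrac{1}{2}n_{<}$ gives both at once.
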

\begin{proof}
By Lemma \ref{hess2}, $\tau\lin(e)$ is self-adjoint on $\Uperp_{e}$, so \eqref{eq:hessR} gives the counts. If $\hess W_{e}$ is positive semidefinite on a subspace $V \subset \Uperp_{e}$, then by \eqref{eq:hessJ} it is negative definite on $JV$, so the index is at least $\dim_{\rea}V$. Taking $V$ maximal, so that $\dim_{\rea}V$ is $\dim_{\rea}\Uperp_{e}$ minus the index, shows that the index is at least $\tfrac{1}{2}\dim_{\rea}\Uperp_{e} = D$. Since $\tau\lin(e)$ anticommutes with $J$, its eigenvalues on $\Uperp_{e}$ come in pairs $\pm\mu$, with $\mu \geq 0$ and eigenvectors $b$ and $Jb$, on which $\hess W_{e}$ takes the values $16W(e)^{2}(\pm\mu - \tfrac{1}{2})\nnorm{b}^{2}$; such a pair contributes two negative directions if $\mu < \tfrac{1}{2}$ and one otherwise, which gives the equality statement.
\end{proof}

The Hessian at the critical points of Lemma \ref{thm:sl2crit} is computed using a description of the Nahm algebra as a tensor product. Let $\mathsf{c}_{1}, \mathsf{c}_{2}, \mathsf{c}_{3}$ be the standard basis of $\com^{3}$, identify $\g^{3}$ with $\g \otimes \com^{3}$ via $a \mapsto \sum_{i=1}^{3}a_{i} \otimes \mathsf{c}_{i}$, and let $\times$ be the complex bilinear cross product on $\com^{3}$, so that $(\com^{3}, \times) \simeq \so(3, \com)$. By \eqref{nahmproduct},
\begin{align}\label{eq:nahmtensor}
(x \otimes u) \tmlt (y \otimes v) = [x, y] \otimes (u \times v), \qquad x, y \in \g,\ u, v \in \com^{3},
\end{align}
so that $(\g^{3}, \tmlt)$ is the tensor product of the Lie algebras $\g$ and $(\com^{3}, \times)$, and $SO(3, \com)$, acting on the second factor, acts by automorphisms of $(\g^{3}, \tmlt)$ \cite[Corollary~9.1]{Kinyon-Sagle}. In the same terms, the forms $B$ and $H$ and the conjugation $\tau$ of \eqref{eq:componentwise} are those of $\g$ tensored with the bilinear form $u \cdot v = \sum_{i=1}^{3}u_{i}v_{i}$, the Hermitian form $u \cdot \bar{v}$, and the complex conjugation $u \mapsto \bar{u}$ of $\com^{3}$, respectively.

\begin{theorem}\label{thm:morse}
Let $e \in \idem_{\tau}(\g^{3}, \tmlt)$ be the $\tau$-twisted idempotent of Lemma \ref{thm:sl2crit}\ref{sl2crit3}, let $n_{d}$ be as in \eqref{eq:sperpdecomp}, and let $D = 3(\dim_{\com}\g - 1)$ be as above.
\begin{enumerate}[label=(\arabic*), ref=(\arabic*)]
\item\label{morse1} On $\g^{3} = \g \otimes \com^{3}$, $\tau\lin(e) = \tfrac{1}{2}\tau M$, where
\begin{align}\label{eq:spinorbit}
M := \sum_{l=1}^{3}\ad(z_{l})\otimes T_{l}, \qquad T_{l}v := \mathsf{c}_{l} \times v, \quad v \in \com^{3}.
\end{align}
The operator $M$ is complex linear, $H$-Hermitian, and commutes with $\tau$, and its eigenvalues on the summands of $V(d) \otimes \com^{3} \simeq V(d+2) \oplus V(d) \oplus V(d-2)$ (for $d \geq 2$) are $-\tfrac{d}{2}$, $1$, and $\tfrac{d}{2}+1$; for $d = 1$ they are $-\tfrac{1}{2}$ on $V(3)$ and $1$ on $V(1)$, and for $d = 0$, $M = 0$. On each eigenspace of $M$ with eigenvalue $m$, $\tau\lin(e)$ has eigenvalues $\pm\tfrac{m}{2}$, each with real multiplicity equal to the complex dimension of the eigenspace.
\item\label{morse2} The coindex, nullity, and index of the critical point $[e] \in \mathcal{P}$ of $W$ are
\begin{align}\label{eq:morseindex}
\begin{aligned}
\operatorname{coind} &= n_{2} + \sum_{d \geq 3}(2d+2)n_{d}, \qquad \operatorname{null} = 6 + 2n_{1} + 8n_{2} + \sum_{d \geq 3}(d+1)n_{d},\\
\operatorname{ind} &= D + 3n_{0} + 4n_{1}.
\end{aligned}
\end{align}
\item\label{morse3} The Hessian of $W$ at $[e]$ is negative semidefinite if and only if $j = 1$.
\item\label{morse4} $\operatorname{ind} = D$ if and only if the centralizer of $\subs$ in $\g$ is trivial, which is the case if and only if $\subs$ is a distinguished subalgebra in the sense of Bala and Carter \cite[Section~8.2]{Collingwood-McGovern}. In particular $\operatorname{ind} = D$ for the principal subalgebra.
\end{enumerate}
\end{theorem}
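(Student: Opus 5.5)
The approach is to make the Nahm product at $e=\tfrac{1}{2}(z_{1},z_{2},z_{3})$ explicit through the tensor description \eqref{eq:nahmtensor}, so that $\lin(e)$ becomes a spin--orbit operator. Then read off its spectrum from $\sll(2,\com)$ representation theory and convert it into Morse data with Lemmas \ref{thm:hessW}, \ref{hess2} and \ref{lem:indexbound}.

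\emph{Part \ref{morse1}.} Writing $e=\tfrac{1}{2}\sum_{l}z_{l}\otimes\mathsf{c}_{l}$, \eqref{eq:nahmtensor} gives $\lin(e)(y\otimes v)=\tfrac{1}{2}\sum_{l}[z_{l},y]\otimes(\mathsf{c}_{l}\times v)$, that is $\lin(e)=\tfrac12 M$.

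Since $z_{l}\in\gz$ and the $T_{l}$ are real, $M$ commutes with $\tau$. By \eqref{basic4} each $\ad(z_{l})$ is skew-Hermitian, and each $T_{l}$ is real skew-symmetric, so $M$ is Hermitian.

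To find the spectrum, set $L_{l}:=i\,\ad(z_{l})$ and $S_{l}:=iT_{l}$. These are Hermitian angular-momentum operators: $L$ has spin $d/2$ on $V(d)$, and $S$ has spin $1$ on $\com^{3}$. Then $M=-\sum_{l}L_{l}\otimes S_{l}$, and $2\sum_{l}L_{l}S_{l}=J_{\mathrm{tot}}^{2}-L^{2}-S^{2}$. Evaluating this Casimir identity on total spin $d/2+1$, $d/2$ and $d/2-1$ gives $M$-eigenvalues $-d/2$, $1$ and $d/2+1$. I will pin down the sign conventions by checking against Lemma \ref{hess2}, which forces eigenvalue $2$ on $V(0)\subset\subs\otimes\com^{3}$.

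Each $M$-eigenspace $E$ is $\tau$-stable, and on it $\tau\lin(e)=\tfrac{m}{2}\tau$. A conjugation of a complex space $E$ has real eigenvalues $\pm1$, each with real multiplicity $\dim_{\com}E$, which gives the last claim.

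\emph{Part \ref{morse2}.} Split $\g\otimes\com^{3}=(\subs\otimes\com^{3})\oplus(\subs^{\perp}\otimes\com^{3})$. The second summand lies in $\Uperp_{e}$.

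On the first summand, $\subs\cong V(2)$ gives $V(4)\oplus V(2)\oplus V(0)$ with $m=-1,1,2$. Removing $\Usp_{e}$ (real dimension $6$), on which Lemma \ref{hess2} already gives the eigenvalues $\pm1,\pm\tfrac12,\pm\tfrac12$, leaves only $|\mu|=\tfrac12$. These contribute $6$ null and $6$ negative directions.

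On $V(d)\otimes\com^{3}$ in $\subs^{\perp}$, use Lemma \ref{lem:indexbound}: positive directions come from $\mu>\tfrac12$, null directions from $|\mu|=\tfrac12$ (one of each $\pm$ pair), and negative directions from the rest. Multiply by the dimensions $d+3$, $d+1$, $d-1$ of the pieces. Doing the cases $d=0,1,2$ and $d\geq3$ separately, and using $D=3(\dim\g-1)=3\cdot2+3\sum_{d}n_{d}(d+1)$, should reproduce \eqref{eq:morseindex}. This is bookkeeping.

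\emph{Parts \ref{morse3} and \ref{morse4}.} By \ref{morse2}, negative semidefiniteness is equivalent to $n_{d}=0$ for $d\geq2$, i.e. to every eigenvalue of $\ad(h)$ lying in $\{0,\pm1,\pm2\}$ with the eigenvalue $2$ occurring only on $\subs$.

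If $j=1$, Lemma \ref{lem:lower}\ref{lower3} lets us take $h=\theta^{\vee}$. Then the proof of Lemma \ref{lem:Bh} shows that $\theta$ is the only root with value $2$, and \ref{R1} bounds all other values by $1$. Hence $n_{d}=0$ for $d\geq2$.

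The converse is the main obstacle. Assume the eigenvalues are as above and conjugate $h$ to be dominant. Then exactly one positive root $\mu_{0}$ satisfies $\mu_{0}(h)=2$, and by \ref{R1} it must be $\theta$. I would then try to show that $h$ is proportional to $\theta^{\vee}$. One route is to compare $B(h,h)=\sum_{\al}\al(h)^{2}=8+2\#\{\al:\al(h)=\pm1\}$ with the corresponding count for $\theta^{\vee}$ via \eqref{eq:hvweights}. The other route is the uniqueness of the contact grading, i.e. the grading with $\dim\g_{2}=1$. Either would give $(h,h)=2$ and hence $j=1$ by \eqref{eq:dynkinindex}.

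For \ref{morse4}, $\operatorname{ind}=D$ if and only if $n_{0}=n_{1}=0$. The condition $n_{0}=0$ says the centralizer of $\subs$ is trivial, which is the Bala--Carter notion of distinguished. Distinguished nilpotents are even, so then $n_{1}=0$ automatically. The principal subalgebra is distinguished, which gives the final claim.
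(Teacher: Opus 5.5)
Your arguments for \ref{morse1}, \ref{morse2}, \ref{morse4}, and for the implication ``$j = 1 \Rightarrow$ negative semidefinite'' in \ref{morse3}, follow the paper's proof. Two small variations are both fine. Your spin--orbit form $M = -\sum_{l} L_{l}\otimes S_{l}$ is the paper's Casimir expansion written with Hermitian operators. Subtracting the spectrum of $\tau\lin(e)$ on $\Usp_{e}$ (Lemma \ref{hess2}) from its spectrum on $\subs\otimes\com^{3}$ is a valid substitute for the paper's identification of $\Uperp_{e}\cap(\subs\otimes\com^{3})$ with off-diagonal $3\times 3$ matrices.

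The genuine gap is the converse in \ref{morse3}, which you flag as the main obstacle but leave open. You correctly reduce it to showing $h = \theta^{\vee}$, for a dominant $h$ on which $\theta$ is the only root with value $2$ and all other roots take values in $\{0, \pm 1\}$. Neither proposed route is carried out. The count gives $B(h,h) = \sum_{\al\in\Phi}\al(h)^{2} = 8 + 2n_{1}$. Since $B(h,h) = 4j\hv$ by \eqref{eq:dynkinindex}, you would need $n_{1} = 2\hv - 4$. Lemma \ref{lem:Bh} only gives $n_{1} \geq 2\hv - 4$, which is the wrong direction, and you offer nothing for the reverse inequality. The appeal to uniqueness of contact gradings is an unreferenced classification result whose hypotheses you do not check.

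The missing idea is that you still have the whole triple $(e_{+}, e_{-}, h)$ at your disposal.
\begin{itemize}
\item Conjugate the triple so that $h \in \h$ is dominant. Because $n_{d} = 0$ for $d \geq 2$, the eigenvalue $2$ is the largest eigenvalue of $\ad(h)$, its eigenspace is $\com e_{+}$, and the $(-2)$-eigenspace is $\com e_{-}$.
\item Since $\theta(h) \geq 2$ (proof of Lemma \ref{lem:Bh}), this forces $\theta(h) = 2$. Hence $\g_{\theta} = \com e_{+}$ and $\g_{-\theta} = \com e_{-}$.
\item Then $h = [e_{+}, e_{-}] \in [\g_{\theta}, \g_{-\theta}] = \com t_{\theta}$ by \ref{R2}. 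The condition $\theta(h) = 2$ gives $h = \theta^{\vee}$, so $(h,h) = 2$ and $j = 1$ by \eqref{eq:dynkinindex}.
\end{itemize}
This is the paper's argument.

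Your counting route can also be closed, but only with this same input. Because $e_{+} \in \g_{\theta}$, $\ad(e_{+})$ maps the $(-1)$-eigenspace of $\ad(h)$ bijectively onto the $(+1)$-eigenspace, since both lie in $n_{1}V(1)$. So every positive root $\al$ with $\al(h) = 1$ satisfies $\theta - \al \in \Phi$, and hence $\al(\theta^{\vee}) = 1$. By \eqref{eq:hvweights} exactly $2\hv - 4$ positive roots have this property. This gives $n_{1} \leq 2\hv - 4$, that is, $j \leq 1$.
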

\begin{proof}
Let $A_{l} := \ad(z_{l})$ for $l \in \{1, 2, 3\}$. These operators are skew-Hermitian, commute with $\tau$, and satisfy $[A_{1}, A_{2}] = A_{3}$, $[A_{2}, A_{3}] = A_{1}$, $[A_{3}, A_{1}] = A_{2}$ by \eqref{eq:xirelations}. Since $e = \tfrac{1}{2}\sum_{l=1}^{3}z_{l} \otimes \mathsf{c}_{l}$, \eqref{eq:nahmtensor} gives $\lin(e) = \tfrac{1}{2}\sum_{l=1}^{3}A_{l} \otimes T_{l} = \tfrac{1}{2}M$, so $\tau\lin(e) = \tfrac{1}{2}\tau M$. The operators $T_{l}$ are the adjoint operators of $(\com^{3}, \times)$, so $[T_{1}, T_{2}] = T_{3}$, $[T_{2}, T_{3}] = T_{1}$, and $[T_{3}, T_{1}] = T_{2}$. Each $A_{l} \otimes 1 + 1 \otimes T_{l}$ is skew-Hermitian, and these operators satisfy the relations of $\su(2)$, whose Casimir $\sum_{l=1}^{3}(A_{l} \otimes 1 + 1 \otimes T_{l})^{2}$ acts on the irreducible module $V(q)$ as $-\tfrac{1}{4}q(q+2)$. Expanding the square shows
\begin{align}\label{eq:casimirM}
M = \tfrac{1}{2}\Big(\sum_{l=1}^{3}(A_{l} \otimes 1 + 1 \otimes T_{l})^{2} - \sum_{l=1}^{3}A_{l}^{2}\otimes 1 - 1 \otimes \sum_{l=1}^{3}T_{l}^{2}\Big).
\end{align}
Since $\sum_{l=1}^{3}A_{l}^{2} = -\tfrac{1}{4}d(d+2)$ on $V(d)$ and $\sum_{l=1}^{3}T_{l}^{2} = -2$, by \eqref{eq:casimirM} the operator $M$ acts on the summand $V(q)$ of $V(d) \otimes \com^{3}$ as $\tfrac{1}{2}\big(-\tfrac{1}{4}q(q+2) + \tfrac{1}{4}d(d+2) + 2\big)$, which is $-\tfrac{d}{2}$, $1$, $\tfrac{d}{2}+1$ for $q = d+2, d, d-2$. $M$ is Hermitian because the $A_{l}$ and $T_{l}$ are skew, and commutes with $\tau$ because $\tau$ commutes with the $A_{l}$ and the $T_{l}$ are real. An eigenspace $E_{m}$ of $M$ is therefore $\tau$-stable, $E_{m} = E_{m}^{\tau} \oplus iE_{m}^{\tau}$ with $E_{m}^{\tau}$ the fixed points of $\tau$, and $\tfrac{1}{2}\tau M$ acts as $\tfrac{m}{2}$ on $E_{m}^{\tau}$ and as $-\tfrac{m}{2}$ on $iE_{m}^{\tau}$. This proves \ref{morse1}.

For \ref{morse2}, $\g^{3} = (\subs \otimes \com^{3}) \oplus (\subs^{\perp}\otimes \com^{3})$ is an $H$-orthogonal decomposition into $M$-invariant subspaces, and $\Usp_{e} \subset \subs \otimes \com^{3}$, so $\Uperp_{e} = (\Uperp_{e} \cap (\subs \otimes \com^{3})) \oplus (\subs^{\perp}\otimes \com^{3})$. By \eqref{eq:hessR} and \ref{morse1}, each eigenspace $E_{m}$ of $M$ contained in $\Uperp_{e}$ contributes eigenvalues $16W(e)^{2}(\tfrac{|m|}{2} - \tfrac{1}{2})$ and $16W(e)^{2}(-\tfrac{|m|}{2} - \tfrac{1}{2})$, each with real multiplicity $\dim_{\com}E_{m}$. Hence $E_{m}$ contributes $\dim_{\com}E_{m}$ to the index, and in addition $\dim_{\com}E_{m}$ to the coindex if $|m| > 1$, to the nullity if $|m| = 1$, and to the index if $|m| < 1$. Identify $\subs \otimes \com^{3}$ with the $3\times 3$ complex matrices via $z_{a} \otimes \mathsf{c}_{b} \mapsto E_{ab}$. Then $A_{l}$ acts on $\subs$ as $T_{l}$, the decomposition $V(2) \otimes \com^{3} = V(4) \oplus V(2) \oplus V(0)$ is the decomposition into symmetric trace-free, antisymmetric, and scalar matrices, with $M$-eigenvalues $-1$, $1$, $2$, and $\Usp_{e}$ is the space of complex diagonal matrices. Hence $\Uperp_{e} \cap (\subs \otimes \com^{3})$, the matrices with vanishing diagonal, lies in the sum of the eigenspaces with $|m| = 1$ and has complex dimension $6$, contributing $6$ to the nullity and $6$ to the index. The summands of \eqref{eq:sperpdecomp} contribute as follows: $V(0)\otimes\com^{3}$, with $m = 0$, contributes index $6$; $V(1)\otimes\com^{3} = V(3) \oplus V(1)$, with $m = -\tfrac{1}{2}, 1$, contributes nullity $2$ and index $8 + 2 = 10$; $V(2)\otimes\com^{3}$, with $m = -1, 1, 2$, contributes coindex $1$, nullity $5 + 3 = 8$, and index $9$; and for $d \geq 3$, $V(d)\otimes\com^{3}$, with $m = -\tfrac{d}{2}, 1, \tfrac{d}{2}+1$, contributes coindex $(d+3) + (d-1) = 2d+2$, nullity $d+1$, and index $3(d+1)$. Summing gives the coindex and nullity in \eqref{eq:morseindex}, and the index $6 + 6n_{0} + 10n_{1} + 9n_{2} + \sum_{d \geq 3}3(d+1)n_{d}$, which equals $D + 3n_{0} + 4n_{1}$ because $\dim_{\com}\g = 3 + n_{0} + 2n_{1} + 3n_{2} + \sum_{d \geq 3}(d+1)n_{d}$.

For \ref{morse3}, let $(e_{+}, e_{-}, h)$ be an $\sll(2, \com)$-triple spanning $\subs$. The Hessian is negative semidefinite if and only if the coindex vanishes, which by \eqref{eq:morseindex} happens if and only if $n_{d} = 0$ for $d \geq 2$, that is, if and only if the eigenvalues of $\ad(h)$ on $\subs^{\perp}$ lie in $\{0, \pm 1\}$. If $j = 1$, then $h$ is conjugate to $\theta^{\vee}$ by Lemma \ref{lem:Bh}, and by \ref{R1} and the equality case of \eqref{eq:CSh} the only roots taking the value $\pm 2$ on $\theta^{\vee}$ are $\pm\theta$ and no root takes a larger value, so the condition holds. Conversely, suppose the condition holds, and, as in the proof of Lemma \ref{lem:Bh}, conjugate $h$ to a dominant element of the real span of the coroots in $\h$. Then $2$ is the largest eigenvalue of $\ad(h)$ and its eigenspace is $\com e_{+}$, and similarly for $-2$ and $\com e_{-}$. Since $\theta(h) \geq 2$ by the proof of Lemma \ref{lem:Bh}, $\theta(h) = 2$ and $\g_{\theta} = \com e_{+}$; likewise $\g_{-\theta} = \com e_{-}$, so $h = [e_{+}, e_{-}] \in \com t_{\theta}$ by \ref{R2}, and $\theta(h) = 2$ forces $h = \theta^{\vee}$, so $j = 1$.

For \ref{morse4}, by \eqref{eq:morseindex}, $\operatorname{ind} = D$ if and only if $n_{0} = n_{1} = 0$. The centralizer $\mathfrak{z}_{\g}(\subs)$ is the isotypic component $n_{0}V(0)$, so $n_{0} = \dim_{\com}\mathfrak{z}_{\g}(\subs)$. A subalgebra isomorphic to $\sll(2, \com)$ is distinguished if and only if its centralizer contains no nonzero semisimple element, which for the reductive Lie algebra $\mathfrak{z}_{\g}(\subs)$ means $\mathfrak{z}_{\g}(\subs) = 0$; and distinguished subalgebras are even, so $n_{0} = 0$ forces $n_{1} = 0$ \cite[Section~8.2]{Collingwood-McGovern}. The principal subalgebra is distinguished.
\end{proof}

\begin{remark}\label{rem:wolf}
For $j = 1$, the proof of Theorem \ref{thm:morse}\ref{morse3} shows that $\subs^{\perp}$ is the sum of the centralizer of $\subs$, on which $\subs$ acts trivially, and the isotypic component $n_{1}V(1)$. This is the decomposition of the quaternionic symmetric pair attached to the highest root by Wolf \cite{Wolf-quaternionic}, in which the real form of $n_{1}V(1)$ is the isotropy representation and $\subs \cap \gz \simeq \mathfrak{sp}(1)$ acts on it by the imaginary quaternions, and it is the fact used by Ohnita and Tasaki \cite[(8)]{Ohnita-Tasaki} in their uniqueness theorem for the long-root subgroup.
\end{remark}

By Theorem \ref{thm:morse}\ref{morse3}, the critical points of Lemma \ref{thm:sl2crit} with $j \geq 2$ are saddle points of $W$. For $j = 1$ the Hessian is negative semidefinite but degenerate; that these points are global maxima is the content of Theorem \ref{thm:A}, and Corollary \ref{cor:morsebott} below shows that the degeneracy is accounted for exactly by the manifold of maximum points.

The computation of Theorem \ref{thm:morse} applies to every $\tau$-twisted idempotent with components in $\subs$, not only to the one of Lemma \ref{thm:sl2crit}\ref{sl2crit3}, and the reason is visible in \eqref{eq:nahmtensor}. Let $GL(3, \com)$ act on $\g^{3} = \g \otimes \com^{3}$ through the second factor, $(Rb)_{i} = \sum_{m=1}^{3}R_{im}b_{m}$ for $b \in \g^{3}$ and $i \in \{1, 2, 3\}$, and let $\bar{R}$ be the entrywise complex conjugate of $R$. Since $\big((Ru) \times (Rv)\big) \cdot Rw = \det(Ru, Rv, Rw) = \det(R)\,(u \times v)\cdot w$ for $u, v, w \in \com^{3}$,
\begin{align}\label{eq:crossequivariance}
(Ru) \times (Rv) = \det(R)\,(R^{t})^{-1}(u \times v), \qquad R \in GL(3, \com),
\end{align}
which for $R \in SO(3, \com)$ is the automorphism property, and for unitary $R$, where $(R^{t})^{-1} = \bar{R}$, reads $(Ru) \times (Rv) = \det(R)\bar{R}(u \times v)$. Equivalently, the conjugate-bilinear product $(u, v) \mapsto \overline{u \times v}$, whose value is the vector $p \in \com^{3}$ satisfying $w \cdot \bar{p} = \det(u, v, w)$ for all $w \in \com^{3}$, is invariant under the group $SU(3)$ of transformations preserving both the Hermitian form and the volume form of $\com^{3}$. By \eqref{eq:nahmtensor}, $SU(3)$ therefore acts by automorphisms of the conjugate-bilinear product $(a, b) \mapsto \tau(a \tmlt b)$ on $\g^{3}$, whose nonzero idempotents are the $\tau$-twisted idempotents and whose multiplication operators are the operators $\tau\lin(a)$.
\begin{lemma}\label{lem:rotation}
Let $\subs$, $z_{1}, z_{2}, z_{3}$, and $e$ be as in Theorem \ref{thm:morse}.
\begin{enumerate}[label=(\arabic*), ref=(\arabic*)]
\item\label{rot1} Each $R \in U(3)$ acts on $\g^{3}$ as an $H$-unitary operator commuting with $J$ and preserving $\subs\otimes\com^{3}$ and $\subs^{\perp}\otimes\com^{3}$, $\tau R = \bar{R}\tau$, and
\begin{align}\label{eq:rotation}
(Ra) \tmlt (Rb) = \det(R)\,\bar{R}(a \tmlt b), \qquad a, b \in \g^{3}.
\end{align}
If $R \in SU(3)$, then $Re$ is a $\tau$-twisted idempotent and $\tau\lin(Re)R = R\,\tau\lin(e)$.
\item\label{rot2} Every nonzero $\tau$-twisted idempotent whose components lie in $\subs$ equals $Re$ for a unique $R \in SU(3)$.
\item\label{rot3} The conclusions \ref{morse2}--\ref{morse4} of Theorem \ref{thm:morse} hold at the image in $\mathcal{P}$ of every nonzero $\tau$-twisted idempotent whose components lie in $\subs$.
\end{enumerate}
\end{lemma}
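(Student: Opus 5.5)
The plan is to prove \ref{rot1} by direct computation in the tensor description \eqref{eq:nahmtensor}, then \ref{rot2} by encoding an idempotent with components in $\subs$ as a $3\times 3$ matrix, and finally \ref{rot3} by a spectral comparison.

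For \ref{rot1}, I would work in $\g^{3} = \g \otimes \com^{3}$ with $R$ acting on the second factor.
\begin{enumerate}[label=(\roman*)]
\item Since $H$ is $H_{\g}$ tensored with $u \cdot \bar{v}$, a unitary $R$ is $H$-unitary. It is complex linear, so it commutes with $J$.
\item It acts only on the $\com^{3}$ factor, so it preserves $\subs \otimes \com^{3}$ and $\subs^{\perp} \otimes \com^{3}$.
\item Because $\tau(x \otimes u) = \tau x \otimes \bar{u}$, we get $\tau R = \bar{R}\tau$.
\item Identity \eqref{eq:rotation} follows on decomposable tensors from \eqref{eq:nahmtensor} and \eqref{eq:crossequivariance}, using $(R^{t})^{-1} = \bar{R}$ for unitary $R$, and then extends by bilinearity.
\item For $R \in SU(3)$, \eqref{eq:rotation} gives $Re \tmlt Re = \bar{R}(e \tmlt e) = \bar{R}\tau e = \tau(Re)$, so $Re$ is a $\tau$-twisted idempotent.
\item Likewise $\tau\lin(Re)Rb = \tau\bar{R}(e \tmlt b) = R\tau(e \tmlt b)$, which is the stated intertwining relation.
\end{enumerate}

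For \ref{rot2}, let $e'$ be a nonzero $\tau$-twisted idempotent with components in $\subs$, and write $e' = \tfrac{1}{2}\sum_{a,i}P_{ia}\,z_{a} \otimes \mathsf{c}_{i}$ for a matrix $P$; then $e' = Pe$ in the above action.
\begin{enumerate}[label=(\roman*)]
\item By Lemma \ref{thm:idem}\ref{idem2}, the components $e'_{i}$ are pairwise $H$-orthogonal with a common norm, and $W(e') = F(e'_{1}, e'_{2})$.
\item By \eqref{eq:sl2norm}, $F(e'_{1}, e'_{2}) = 1/(j\hv)$. Then \eqref{eq:idemnorms} gives $\norm{e'_{i}}^{2} = j\hv/4 = \norm{e_{i}}^{2}$.
\item Since $H$ is $j\hv$ times the standard Hermitian product in the coordinates $z_{a}$, the rows of $P$ are orthonormal, so $P \in U(3)$.
\item Applying \eqref{eq:rotation}, $\tau(Pe) = Pe \tmlt Pe = \det(P)\bar{P}\tau e$, while also $\tau(Pe) = \bar{P}\tau e$. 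Since $\bar{P}\tau e \neq 0$, this forces $\det P = 1$.
\item Uniqueness holds because the $z_{a} \otimes \mathsf{c}_{i}$ are linearly independent, so $Re = R'e$ implies $R = R'$.
\end{enumerate}

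Part \ref{rot3} is the step needing care, because $R$ need not map $\Usp_{e}$ onto $\Usp_{Re}$, so one cannot simply transport the Hessian. Instead I would use Lemma \ref{lem:indexbound}: coindex, nullity and index depend only on the spectrum of $\tau\lin$ restricted to $\Uperp$.
\begin{enumerate}[label=(\roman*)]
\item By \ref{rot1}, $\tau\lin(Re) = R\,\tau\lin(e)R^{-1}$, so the two operators have the same spectrum on all of $\g^{3}$, with the same real multiplicities.
\item By Lemma \ref{hess2}, for every $\tau$-twisted idempotent the spectrum on $\Usp$ is the same: $\pm 1$ with multiplicity $1$ each and $\pm\tfrac{1}{2}$ with multiplicity $2$ each.
\item Since $\g^{3} = \Usp \oplus \Uperp$ with both summands invariant under $\tau\lin$, the spectra on $\Uperp_{e}$ and $\Uperp_{Re}$ agree. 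Hence the counts in \eqref{eq:morseindex} agree.
\item Statements \ref{morse3} and \ref{morse4} of Theorem \ref{thm:morse} depend only on the counts in \ref{morse2}, on $j$, and on the centralizer of $\subs$. None of these changes when $e$ is replaced by $Re$, so \ref{rot3} follows.
\end{enumerate}
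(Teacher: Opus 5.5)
Your proposal is correct and follows the paper's proof essentially step for step: the same tensor-factor computation for \ref{rot1}, the same unitarity-then-determinant argument via \eqref{eq:rotation} for \ref{rot2}, and the same spectral-counting argument for \ref{rot3}. The only minor difference is in \ref{rot3}, where you compare the spectra of $\tau\lin(Re)$ and $\tau\lin(e)$ on all of $\g^{3}$ and then subtract the spectrum on $\Usp$, which is the same for every $\tau$-twisted idempotent by Lemma \ref{hess2}; the paper does the same bookkeeping separately on $\subs\otimes\com^{3}$ and $\subs^{\perp}\otimes\com^{3}$, and both versions are valid.
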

\begin{proof}
\ref{rot1} The first claims hold because $R$ acts on the second factor of $\g \otimes \com^{3}$ unitarily with respect to $u \cdot \bar{v}$, while $\tau$ is $\tau$ on $\g$ tensored with complex conjugation of $\com^{3}$. By \eqref{eq:nahmtensor} and \eqref{eq:crossequivariance}, $(Ra) \tmlt (Rb) = \det(R)(R^{t})^{-1}(a \tmlt b)$ for pure tensors $a$ and $b$, hence for all $a, b \in \g^{3}$, which is \eqref{eq:rotation} because $(R^{t})^{-1} = \bar{R}$ for unitary $R$. If $\det R = 1$, then, by \eqref{eq:rotation}, $(Re) \tmlt (Re) = \bar{R}(e \tmlt e) = \bar{R}\tau e = \tau(Re)$ and $\tau\lin(Re)Rb = \tau\bar{R}(e \tmlt b) = R\tau(e \tmlt b) = R\,\tau\lin(e)b$.

\ref{rot2} Let $e' \in\idem_{\tau}(\g^{3}, \tmlt)$ be a nonzero $\tau$-twisted idempotent with components in $\subs$, and put $\beta := j\hv$. By Lemma \ref{thm:idem}\ref{idem2}, its components are pairwise $H$-orthogonal of a common norm and $F(e'_{1}, e'_{2}) = W(e') = 3/(4H(e', e'))$, while $F(e'_{1}, e'_{2}) = 1/\beta$ by \eqref{eq:sl2norm}; so $H(e'_{i}, e'_{p}) = \tfrac{\beta}{4}\delta_{ip}$ for $i, p \in \{1, 2, 3\}$. Write $e'_{i} = \tfrac{1}{2}\sum_{m=1}^{3}R_{im}z_{m}$ for $i \in \{1, 2, 3\}$, that is, $e' = Re$. Since $H(z_{l}, z_{m}) = \beta\delta_{lm}$ by \eqref{eq:xirelations}, also $H(e'_{i}, e'_{p}) = \tfrac{\beta}{4}(RR^{\dagger})_{ip}$, so $R$ is unitary. By \eqref{eq:rotation}, $\tau e' = e' \tmlt e' = \det(R)\bar{R}\tau e$, while $\tau e' = \tau Re = \bar{R}\tau e$, so $\det R = 1$. Uniqueness holds because $z_{1}, z_{2}, z_{3}$ is a basis of $\subs$.

\ref{rot3} Let $e' = Re$ with $R \in SU(3)$, as in \ref{rot2}, so $W(e') = W(e)$. By \ref{rot1}, $\tau\lin(e') = R\,\tau\lin(e)R^{-1}$, so $\tau\lin(e')$ and $\tau\lin(e)$ have the same spectrum on $\subs\otimes\com^{3}$ and on $\subs^{\perp}\otimes\com^{3}$. Write $\Usp_{e'}$ and $\Uperp_{e'}$ for the spaces \eqref{eq:UR} of $e'$. By \eqref{idem4} in the proof of Lemma \ref{hess2}, $\tau\lin(e')$ acts on $\spn_{\rea}\{\hat{e}'_{i}: i \in \{1, 2, 3\}\}$ with eigenvalues $1, -\tfrac{1}{2}, -\tfrac{1}{2}$, and so, by the anticommutation with $J$, on the image of this space under $J$ with eigenvalues $-1, \tfrac{1}{2}, \tfrac{1}{2}$; the same holds for $e$. Since $\Usp_{e'} \subset \subs \otimes \com^{3}$ and $\Uperp_{e'} = (\Uperp_{e'} \cap (\subs\otimes\com^{3})) \oplus (\subs^{\perp}\otimes\com^{3})$, and likewise for $e$, the operators $\tau\lin(e')$ on $\Uperp_{e'}$ and $\tau\lin(e)$ on $\Uperp_{e}$ have the same spectrum. By \eqref{eq:hessR}, the coindex, nullity, and index of $[e']$ and $[e]$ coincide.
\end{proof}

Let $\mathcal{S}$ be the set of $\tau$-stable subalgebras of $\g$ isomorphic to $\sll(2, \com)$ of Dynkin index $1$, and let $\subs_{\theta} \in \mathcal{S}$ be as in Lemma \ref{lem:lower}\ref{lower2}. For $\subs \in \mathcal{S}$, the proof of Theorem \ref{thm:morse}\ref{morse3} shows $\g = \subs \oplus n_{0}V(0) \oplus n_{1}V(1)$, with multiplicities that by the following corollary do not depend on $\subs$.
\begin{corollary}\label{cor:morsebott}
Let $\mathfrak{M} \subset \mathcal{P}$ be the set of maximum points of $W$.
\begin{enumerate}[label=(\arabic*), ref=(\arabic*)]
\item\label{mb1} $G_{0}$ acts transitively on $\mathcal{S}$, so $\mathcal{S} = G_{0}/N$ with $N := \{g \in G_{0}: \Ad(g)\subs_{\theta} = \subs_{\theta}\}$, a compact manifold of real dimension $2n_{1}$.
\item\label{mb2} The map $[a] \mapsto \spn_{\com}\{a_{1}, a_{2}, a_{3}\}$ is a smooth fiber bundle $\mathfrak{M} \to \mathcal{S}$ whose fiber over $\subs$ is the manifold of pairs $([a_{1}], [a_{2}])$ of $H$-orthogonal lines in $\subs \simeq \com^{3}$, that is, the manifold of complete flags in $\com^{3}$, which is $SU(3)/T^{2}$ with $T^{2}$ the maximal torus. In particular $\mathfrak{M}$ is a compact connected submanifold of $\mathcal{P}$ of real dimension $6 + 2n_{1}$.
\item\label{mb3} $\mathfrak{M}$ is a nondegenerate critical manifold of $W$ in the sense of Bott \cite{Bott-nondegenerate}: at every point of $\mathfrak{M}$, the kernel of the Hessian of $W$ on $\mathcal{P}$ is the tangent space of $\mathfrak{M}$.
\end{enumerate}
\end{corollary}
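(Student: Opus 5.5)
For \ref{mb1} I will use Lemma \ref{lem:lower}\ref{lower3}, which says every element of $\mathcal{S}$ is $\Ad(G_{0})$-conjugate to $\subs_{\theta}$; hence $G_{0}$ acts transitively on $\mathcal{S}$. The stabilizer $N$ is a closed subgroup, so $\mathcal{S} = G_{0}/N$ is a compact homogeneous manifold. To compute the dimension I will identify the tangent space at $\subs_{\theta}$ with $\gz/\mathfrak{n}$, where $\mathfrak{n} = \{z \in \gz : [z, \subs_{\theta}] \subset \subs_{\theta}\}$. Use the decomposition $\g = \subs_{\theta} \oplus n_{0}V(0) \oplus n_{1}V(1)$ from the proof of Theorem \ref{thm:morse}\ref{morse3}. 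An element $z$ of $n_{0}V(0)\oplus n_{1}V(1)$ normalizes $\subs_{\theta}$ exactly when $[z, \subs_{\theta}] = 0$, because $[z,\subs_\theta]$ lies in $\subs_\theta^\perp$. So the normalizer of $\subs_{\theta}$ in $\g$ is $\subs_{\theta} \oplus n_{0}V(0)$. This subspace is $\tau$-stable, so $\mathfrak{n}$ is its real form. Therefore $\dim_{\rea}\gz/\mathfrak{n} = \dim_{\com} n_{1}V(1) = 2n_{1}$.

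For \ref{mb2}, the results already proved identify $\mathfrak{M}$ as a set. By Lemma \ref{thm:idem} together with Theorem \ref{thm:C}\ref{thmC2} and Lemma \ref{lem:rigidity}, a point $[a] \in \mathcal{P}$ is a maximum exactly when $a \in (\com^{\ast})^{3}\cdot e'$ for a $\tau$-twisted idempotent $e'$ whose components span some $\subs \in \mathcal{S}$. Then $\spn_{\com}\{a_{1},a_{2},a_{3}\} = \subs$, so the map to $\mathcal{S}$ is well defined. By Lemma \ref{lem:rotation}\ref{rot2}, the fiber over $\subs$ is $\{[Re] : R \in SU(3)\}$, where $e$ is the idempotent of Lemma \ref{thm:sl2crit}\ref{sl2crit3} for $\subs$. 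In $\proj(\subs)^{3}$ these are the triples of mutually $H$-orthogonal lines in $\subs \simeq \com^{3}$. Such a triple is determined by its first two lines, so the fiber is the complete flag manifold $SU(3)/T^{2}$, of real dimension $6$.

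Local triviality will come from $G_{0}$-equivariance. The map $\mathfrak{M} \to \mathcal{S}$ is equivalent to $G_{0} \times_{N} F_{\theta} \to G_{0}/N$, where $F_{\theta}$ is the fiber over $\subs_{\theta}$. This gives smoothness and the total real dimension $6 + 2n_{1}$. Connectedness follows because $G_{0}$ is connected and the fiber is connected.

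For \ref{mb3}, the kernel of the Hessian at $[e']$ always contains $T_{[e']}\mathfrak{M}$, since $W$ is constant on $\mathfrak{M}$. By Lemma \ref{lem:rotation}\ref{rot3} and Theorem \ref{thm:morse}\ref{morse2} with $j=1$, where $n_{d}=0$ for $d\ge 2$, the nullity equals $6 + 2n_{1}$, which is $\dim_{\rea}\mathfrak{M}$. A subspace of the kernel of full dimension is the whole kernel, so the kernel equals $T_{[e']}\mathfrak{M}$. The main obstacle I expect is making the identification in \ref{mb2} genuinely smooth, that is, showing that $\mathfrak{M}$ is an embedded submanifold of $\mathcal{P}$ rather than just an orbit set. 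I will handle this by noting that $\mathfrak{M}$ is a single orbit of the compact group $G_{0}\times SU(3)$ acting smoothly on $\mathcal{P}$, and compact-group orbits are closed embedded submanifolds.
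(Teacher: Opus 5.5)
Your parts (1) and (3) are sound and essentially follow the paper. Computing the normalizer through the $\ad(\subs_{\theta})$-invariant orthogonal splitting $\g = \subs_{\theta} \oplus n_{0}V(0) \oplus n_{1}V(1)$ is an equivalent substitute for the paper's appeal to inner derivations, and the nullity count in (3) is the paper's. One small correction in (3): say that $T\mathfrak{M}$ lies in the kernel because $\mathfrak{M}$ consists of critical points. Constancy of $W$ on $\mathfrak{M}$ only makes the Hessian vanish on $T\mathfrak{M}\times T\mathfrak{M}$, although negative semidefiniteness at a maximum repairs this.

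\textbf{The gap is in (2), exactly at the step you flagged.} The group $SU(3)$ does not act on $\mathcal{P}$.
\begin{itemize}
\item It acts on $\g^{3} = \g\otimes\com^{3}$ by $(Ra)_{i} = \sum_{m}R_{im}a_{m}$. This mixes the components and does not commute with the componentwise rescalings by $(\com^{\ast})^{3}$, so it does not descend to $\proj(\g)^{3}$, not even on $\mathfrak{M}$.
\item For instance, let $e'$ be a maximizing $\tau$-twisted idempotent, let $a = c\cdot e'$ with $|c_{1}|\neq|c_{2}|$, and let $R\in SO(3)$ be a rotation by an angle $\phi\notin\tfrac{\pi}{2}\integer$ in the $\mathsf{c}_{1},\mathsf{c}_{2}$-plane. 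Then $[a] = [e']$ and $[Re']\in\mathfrak{M}$. But $H((Ra)_{1},(Ra)_{2}) = \sin\phi\cos\phi\,(|c_{1}|^{2}-|c_{2}|^{2})\norm{e'_{1}}^{2}\neq 0$, so $[Ra]\notin\mathfrak{M}$.
\item Nor does $G_{0}$ alone suffice. Its stabilizer $N$ acts on $\subs_{\theta}$ through a $3$-dimensional group, while the fiber is $6$-dimensional, so $\mathfrak{M}$ is not a $G_{0}$-orbit.
\end{itemize}
So your identification of $\mathfrak{M}\to\mathcal{S}$ with $G_{0}\times_{N}F_{\theta}\to G_{0}/N$ is so far only an equivariant bijection of sets. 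The claims that $\mathfrak{M}$ is a smooth compact submanifold of dimension $6+2n_{1}$ are unproved, and (3) depends on them.

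\textbf{What is missing is an immersion argument.} The paper supplies one. The map $\Psi(\subs,\ell_{1},\ell_{2}) = (\ell_{1},\ell_{2},\tau[\ell_{1},\ell_{2}])$ on the associated bundle $G_{0}\times_{N}\mathcal{F}$ has injective differential:
\begin{itemize}
\item composing $\Psi$ with the span map to the Grassmannian of $3$-planes gives the bundle projection onto the embedded orbit $\mathcal{S}$, so any vector in $\ker d\Psi$ is vertical;
\item on each fiber $\Psi$ is an embedding into $\proj(\subs)^{3}$, so that vertical vector is zero.
\end{itemize}
An injective immersion of a compact manifold is an embedding.

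Alternatively, you can keep your group-theoretic idea by working upstairs. $G_{0}\times SU(3)$ does act on $\g^{3}$, and the maximizing $\tau$-twisted idempotents form a single compact orbit $\mathcal{I}$. Its image in $\mathcal{P}$ is $\mathfrak{M}$, and the fibers are the free orbits of the diagonal torus $T^{2}\subset SU(3)$. You would still have to check that $T_{e'}\mathcal{I}\cap\Usp_{e'}$ is only the $2$-dimensional torus direction, so that $\mathcal{I}/T^{2}\to\mathcal{P}$ is an injective immersion.
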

\begin{proof}
\ref{mb1} Transitivity is Lemma \ref{lem:lower}\ref{lower3}. The Lie algebra of $N$ is the normalizer of $\subs_{\theta} \cap \gz$ in $\gz$, which is $(\subs_{\theta} \cap \gz) \oplus \mathfrak{z}_{\gz}(\subs_{\theta} \cap \gz)$ because every derivation of a semisimple Lie algebra is inner. Since $\mathfrak{z}_{\g}(\subs_{\theta}) = n_{0}V(0)$ is $\tau$-stable, $\dim_{\rea}\mathcal{S} = \dim_{\rea}\gz - 3 - n_{0} = \dim_{\com}\g - 3 - n_{0} = 2n_{1}$.

\ref{mb2} By Theorem \ref{thm:A} and Lemma \ref{fkwmaxlemma}\ref{fkw2}, $[a] \in \mathfrak{M}$ if and only if $F(a_{1}, a_{2}) = 1/\hv$ and $a_{3} \in \com\,\tau[a_{1}, a_{2}]$, that is, if and only if $a_{1}$ and $a_{2}$ are $H$-orthogonal elements of some $\subs \in \mathcal{S}$ and $a_{3} \in \com\,\tau[a_{1}, a_{2}]$. In that case $a_{3} \in \subs$, and $a_{1}, a_{2}, a_{3}$ are pairwise $H$-orthogonal by Lemma \ref{thm:idem}, so $\subs = \spn_{\com}\{a_{1}, a_{2}, a_{3}\}$ is determined by $[a]$. Let $\widetilde{\mathfrak{M}}$ be the set of triples $(\subs, \ell_{1}, \ell_{2})$ with $\subs \in \mathcal{S}$ and $\ell_{1}, \ell_{2}$ $H$-orthogonal lines in $\subs$; it is the bundle $G_{0} \times_{N} \mathcal{F}$ associated with $G_{0} \to \mathcal{S}$, where $\mathcal{F}$ is the manifold of pairs of $H$-orthogonal lines in $\subs_{\theta}$, of real dimension $6$, on which $N$ acts through $\Ad$. The map $\Psi:\widetilde{\mathfrak{M}} \to \mathcal{P}$, $(\subs, \ell_{1}, \ell_{2}) \mapsto (\ell_{1}, \ell_{2}, \tau[\ell_{1}, \ell_{2}])$, is therefore a smooth $G_{0}$-equivariant bijection onto $\mathfrak{M}$. It restricts on each fiber to an embedding into $\proj(\subs)^{3}$, and its composition with the map $[a] \mapsto \spn_{\com}\{a_{1}, a_{2}, a_{3}\}$ to the Grassmannian of $3$-planes in $\g$, which is smooth on the triples of linearly independent lines, is the bundle projection onto the orbit $\mathcal{S}$; hence the differential of $\Psi$ is injective. Being an injective immersion of a compact connected manifold, $\Psi$ is an embedding.

\ref{mb3} Since $\mathfrak{M}$ consists of critical points, its tangent space is contained in the kernel of the Hessian. Let $[a] \in \mathfrak{M}$. By Lemma \ref{thm:idem}\ref{idem1}, $[a] = [e']$ for a $\tau$-twisted idempotent $e' \in \idem_{\tau}(\g^{3}, \tmlt)$, whose components lie in some $\subs \in \mathcal{S}$ by \ref{mb2}. By Lemma \ref{lem:rotation}\ref{rot3} and Theorem \ref{thm:morse}\ref{morse2} with $n_{d} = 0$ for $d \geq 2$, the nullity at $[a]$ is $6 + 2n_{1} = \dim_{\rea}\mathfrak{M}$.
\end{proof}

\begin{remark}\label{rem:minimalvalue}
Among the critical points of Lemma \ref{thm:sl2crit}, the smallest critical value is $1/(j\hv)$ for the largest Dynkin index $j$ of a subalgebra isomorphic to $\sll(2, \com)$, which is attained exactly by the principal subalgebra, for which $j = 2(\rho^{\vee}, \rho^{\vee})$, where $\rho^{\vee}$ is half the sum of the positive coroots. Indeed, the dominant characteristic $h$ of any such subalgebra satisfies $\al_{i}(h) \in \{0, 1, 2\}$ for the simple roots $\al_{i}$, $i \in \{1, \dots, \operatorname{rk}\g\}$ \cite[Section~3.5]{Collingwood-McGovern}, so $h = \sum_{i=1}^{\operatorname{rk}\g}c_{i}\varpi_{i}^{\vee}$ with $0 \leq c_{i} \leq 2$ in terms of the fundamental coweights $\varpi_{i}^{\vee}$, the basis dual to the simple roots, whose pairwise inner products are positive because the inverse Cartan matrix has positive entries; since $2\rho^{\vee} = \sum_{i=1}^{\operatorname{rk}\g}2\varpi_{i}^{\vee}$, $(h, h) \leq (2\rho^{\vee}, 2\rho^{\vee})$ with equality if and only if $c_{i} = 2$ for all $i$ (see also \cite{Panyushev-dynkinindex}). By Theorem \ref{thm:morse}\ref{morse4}, the index at this point is $D$, the smallest value permitted by Lemma \ref{lem:indexbound}. The critical points of Lemma \ref{thm:roottriangle} show that not all critical points are of the type of Lemma \ref{thm:sl2crit}. It is not known whether $1/(2(\rho^{\vee}, \rho^{\vee})\hv)$ is the infimum of the nonzero critical values of $W$, nor whether the index at a critical point realizing this infimum must equal $D$. On $\grass(2, \gz)$ the nonzero critical values of $\sect$ are exactly the numbers $1/(j\hv)$ (Remark \ref{rem:meshcheryakov}), so there the infimum is $1/(2(\rho^{\vee}, \rho^{\vee})\hv)$, attained exactly on the planes in principal subalgebras.
\end{remark}

\begin{example}\label{ex:sl3}
Let $\g = \sll(3, \com)$, viewed as trace-free $3\times 3$ complex matrices, and $\g_{0} = \su(3)$, so $\hv = 3$, $D = 21$, and $H(X, Y) = 6\tr XY^{\dagger}$. There are two conjugacy classes of subalgebras isomorphic to $\sll(2, \com)$: the long-root subalgebras, with $j = 1$ and $\sll(3, \com) = V(2) \oplus 2V(1) \oplus V(0)$, and the principal subalgebra $\so(3, \com)$, with $j = 4$ and $\sll(3, \com) = V(2) \oplus V(4)$. By Lemma \ref{thm:sl2crit} and Theorem \ref{thm:morse} they yield critical values $\tfrac{1}{3}$ and $\tfrac{1}{12}$ of $F$, $\sect$, and $W$, and at the corresponding critical points of $W$ on $\mathcal{P}$ the triple (coindex, nullity, index) equals $(0, 10, 32)$ and $(10, 11, 21)$. In the first case the nullity is the dimension $6 + 2n_{1} = 10$ of the manifold of maximum points (Corollary \ref{cor:morsebott}); in the second the index is $D$, as Theorem \ref{thm:morse}\ref{morse4} predicts for the principal subalgebra. The root vectors of the long roots $\ep_{1} - \ep_{2}$, $\ep_{2} - \ep_{3}$, $\ep_{3} - \ep_{1}$ give critical points of the type of Lemma \ref{thm:roottriangle}: for $x = E_{12}$ and $y = E_{23}$, $H(x, y) = 0$, $[x, y] = E_{13}$, and
\begin{align}\label{eq:sl3critical}
\qad(y)x = [-E_{32}, E_{13}] = E_{12} = \tfrac{1}{6}\norm{y}^{2}x, \qquad \qad(x)y = [E_{21}, E_{13}] = E_{23} = \tfrac{1}{6}\norm{x}^{2}y,
\end{align}
so by \eqref{eq:sl3critical} and Lemma \ref{lem:critFK}, $(x, y) \in \crtnt(F)$ with $F(x, y) = \tfrac{1}{6}$. By Lemma \ref{lem:critFK}\ref{critF}, $(x, y, -E_{31}) \in \crtnt(W)$, with critical value $\tfrac{1}{6} = 1/(2\hv)$, as in \eqref{eq:trianglevalue}, and the subalgebra generated by $E_{12}$, $E_{23}$, $E_{31}$ is all of $\sll(3, \com)$, in accord with Lemma \ref{thm:idem}\ref{idem3}.
\end{example}

\begin{example}\label{ex:g2}
Let $\g$ be of type $G_{2}$, so $\hv = 4$ and $D = 39$. Its four conjugacy classes of subalgebras isomorphic to $\sll(2, \com)$ are the long-root and short-root subalgebras, the principal subalgebra of a long-root $\sll(3, \com)$ (subregular), and the principal subalgebra \cite{Collingwood-McGovern, Dynkin-semisimplesubalgebras}. Their Dynkin indices, the decompositions \eqref{eq:sperpdecomp}, the critical values $1/(4j)$, and the data \eqref{eq:morseindex} are:
\begin{align}\label{eq:g2table}
\begin{aligned}
&\text{long root:} && j = 1, && \subs^{\perp} = 4V(1) \oplus 3V(0), && \tfrac{1}{4}, && (\operatorname{coind}, \operatorname{null}, \operatorname{ind}) = (0, 14, 64),\\
&\text{short root:} && j = 3, && \subs^{\perp} = 2V(3) \oplus 3V(0), && \tfrac{1}{12}, && (\operatorname{coind}, \operatorname{null}, \operatorname{ind}) = (16, 14, 48),\\
&\text{subregular:} && j = 4, && \subs^{\perp} = V(4) \oplus 2V(2), && \tfrac{1}{16}, && (\operatorname{coind}, \operatorname{null}, \operatorname{ind}) = (12, 27, 39),\\
&\text{principal:} && j = 28, && \subs^{\perp} = V(10), && \tfrac{1}{112}, && (\operatorname{coind}, \operatorname{null}, \operatorname{ind}) = (22, 17, 39).
\end{aligned}
\end{align}
The decompositions in \eqref{eq:g2table} follow from the values of the roots on the coroot of a long, respectively short, root, from $\g = \sll(3, \com) \oplus \com^{3} \oplus (\com^{3})^{\ast}$ for a long-root $\sll(3, \com)$, and from Kostant's theorem that the principal subalgebra decomposes $\g$ according to the exponents $1$, $5$ \cite{Kostant-triples}. The subregular and principal subalgebras are distinguished, and the index is $D$ for both, in accord with Theorem \ref{thm:morse}\ref{morse4}. For three short roots $\mu_{1}, \mu_{2}, \mu_{3}$ of $G_{2}$ with sum zero, the hypothesis $\mu_{a} - \mu_{b} \notin \Phi$ of Lemma \ref{thm:roottriangle}\ref{tri2} fails, and the corresponding root vectors $e_{1}, e_{2}, e_{3}$ generate all of $\g$. Indeed, the subalgebra they generate contains $[\g_{\mu_{b}}, \g_{\mu_{c}}] = \g_{-\mu_{a}}$ for $\{a, b, c\} = \{1, 2, 3\}$, hence all six short root spaces, and then $[\g_{\mu_{a}}, \g_{-\mu_{b}}] = \g_{\mu_{a} - \mu_{b}}$ for $a \neq b$ \cite[Proposition~8.4]{Humphreys}; the six roots $\mu_{a} - \mu_{b}$, $a \neq b$, are distinct and long, so they are all the long roots of $G_{2}$, and the brackets $[\g_{\al}, \g_{-\al}]$, $\al \in \Phi$, span $\h$.
\end{example}

\subsection*{Acknowledgment}
I thank M.~V. Meshcheryakov for bringing \cite{Meshcheryakov} to my attention after the first version of this paper was posted; references to it have been added throughout.

I used the AI system Claude (Anthropic), through the claude.ai interface, in September and October 2026. It was used to propose lemmas and to assess proof strategies, to check computations, to check citations against the cited sources, and to draft and revise parts of the text. 

This work has been supported partially by Project PID2024-156578NB-I00 funded by MICIU /AEI /10.13039/501100011033 / FEDER, EU.

\bibliographystyle{amsplain}
\def\polhk#1{\setbox0=\hbox{#1}{\ooalign{\hidewidth
  \lower1.5ex\hbox{`}\hidewidth\crcr\unhbox0}}} \def\cprime{$'$}
  \def\cprime{$'$} \def\cprime{$'$}
  \def\polhk#1{\setbox0=\hbox{#1}{\ooalign{\hidewidth
  \lower1.5ex\hbox{`}\hidewidth\crcr\unhbox0}}} \def\cprime{$'$}
  \def\cprime{$'$} \def\cprime{$'$} \def\cprime{$'$}
  \def\polhk#1{\setbox0=\hbox{#1}{\ooalign{\hidewidth
  \lower1.5ex\hbox{`}\hidewidth\crcr\unhbox0}}} \def\cprime{$'$}
  \def\Dbar{\leavevmode\lower.6ex\hbox to 0pt{\hskip-.23ex \accent"16\hss}D}
  \def\cprime{$'$} \def\cprime{$'$} \def\cprime{$'$} \def\cprime{$'$}
  \def\cprime{$'$} \def\cprime{$'$} \def\cprime{$'$} \def\cprime{$'$}
  \def\cprime{$'$} \def\cprime{$'$} \def\cprime{$'$} \def\dbar{\leavevmode\hbox
  to 0pt{\hskip.2ex \accent"16\hss}d} \def\cprime{$'$} \def\cprime{$'$}
  \def\cprime{$'$} \def\cprime{$'$} \def\cprime{$'$} \def\cprime{$'$}
  \def\cprime{$'$} \def\cprime{$'$} \def\cprime{$'$} \def\cprime{$'$}
  \def\cprime{$'$} \def\cprime{$'$} \def\cprime{$'$} \def\cprime{$'$}
  \def\cprime{$'$} \def\cprime{$'$} \def\cprime{$'$} \def\cprime{$'$}
  \def\cprime{$'$} \def\cprime{$'$} \def\cprime{$'$} \def\cprime{$'$}
  \def\cprime{$'$} \def\cprime{$'$} \def\cprime{$'$} \def\cprime{$'$}
  \def\cprime{$'$} \def\cprime{$'$} \def\cprime{$'$} \def\cprime{$'$}
  \def\cprime{$'$} \def\cprime{$'$}
\providecommand{\bysame}{\leavevmode\hbox to3em{\hrulefill}\thinspace}
\providecommand{\MR}{\relax\ifhmode\unskip\space\fi MR }
\providecommand{\MRhref}[2]{%
  \href{http://www.ams.org/mathscinet-getitem?mr=#1}{#2}
}
\providecommand{\href}[2]{#2}

\end{document}